\documentclass[a4paper,10pt]{article}
\usepackage[margin=1in]{geometry}
\usepackage{amsmath, amsthm, amssymb}
\usepackage{comment}
\usepackage{hyperref}
\usepackage[T1]{fontenc}
\usepackage[utf8]{inputenc}
\usepackage{graphicx}
\usepackage[active]{srcltx}
\usepackage{enumerate}
\hypersetup{colorlinks=true, pdfstartview=FitV, linkcolor=blue, citecolor=blue, urlcolor=blue}
\usepackage{enumerate}
\usepackage{graphicx}
\usepackage[hang,small]{caption}

\newtheorem{thm}{Theorem}[section]

\newtheorem{lem}[thm]{Lemma}

 \newtheorem{Rmk}[thm]{Remark}

  \newtheorem{Def}[thm]{Definition}

 \def\N {\mathbb{N}}
\def\R {\mathbb{R}}

\def\de {{\partial}}
\def\eps {{\epsilon}}

\newcommand{\supp}{\operatorname{supp}}
\newcommand{\be}{\begin{equation}}
\newcommand{\ee}{\end{equation}}

\newcommand{\by}{\bold{y}}

\numberwithin{equation}{section}

\begin{document}


\title{Non Existence of the Stable 2D Boussinesq Equations in $H^2$ \\ via Density-Vorticity Feedback}
\date{}
\author{Roberta Bianchini\footnote{Consiglio Nazionale Delle Ricerche, 00185, Rome, Italy. e-mail: roberta.bianchini@cnr.it}, \;  Luis Mart\'{\i}nez-Zoroa \footnote{University of Basel, Switzerland. e-mail: †
luis.martinezzoroa@unibas.ch}}
\maketitle 
\begin{abstract}
We establish instantaneous $H^2$ norm explosion for the two-dimensional Boussinesq equations around the linearly stable background profile $-x_2$. While previous ill-posedness results for this system relied on $L^\infty$-type spaces, this work provides the first strong ill-posedness result in the energy-critical space, where stable background stratification acts to suppress deformation growth.

We adapt an idea from our previous work on the IPM equation \cite{BCMZ2025} and select an initial density perturbation vanishing near the origin to neutralize the stable stratification. Crucially, unlike the IPM setting, the evolution is here driven by the non-linear coupling between density and vorticity. Since the initial vorticity vanishes identically, the instantaneous loss of regulari\-ty is driven by density-vorticity feedback - a mechanism fundamentally distinct from both $H^1$ Euler ill-posedness of Bourgain \& Li and the IPM evolution.

Through an iterative scheme yielding a sequence of approximations with strong deformation at the origin, we construct a strong solution $(\rho (t), \omega (t))$ originating from data with arbitrarily small $H^2 \times (H^1\cap \dot H^{-1})$ norm, which immediately exhibits $H^2 \times H^1$ norm explosion for all $t > 0$ while maintaining uniqueness within the energy class.
\end{abstract}

\tableofcontents

\section{Introduction}
The Boussinesq system in two spatial dimensions is given by:
\begin{equation}\label{eq:2DBouss}
\begin{cases}
    \partial_t \rho + \textbf{u} \cdot \nabla \rho = 0, \\
    \partial_t \mathbf{u} +  \mathbf{u} \cdot \nabla \mathbf{u} + \nabla P = \mathbf{g} \rho, \\
    \nabla \cdot \textbf{u} = 0,
\end{cases}
\end{equation}
which models the dynamics of a fluid of density $\rho = \rho(x_1, x_2, t): \mathbb{R}^2\times \R_+ \rightarrow \mathbb{R}$ and velocity $\mathbf{u}=(u_1, u_2)$. Here $\mathbf{g}= (0, -g)^t$ with $g > 0$ denotes the gravitational acceleration; for simplicity, we set $g = 1$ in what follows.

\noindent The velocity field $\textbf{u}=(u_1, u_2)$ of system \eqref{eq:2DBouss} can be expressed in terms of a stream function $\psi$ via
\begin{align}
    \mathbf{u}=\nabla^\perp \psi= (-\partial_{x_2}\psi, \partial_{x_1} \psi), \quad \Delta \psi=\omega.
\end{align}
Consequently, $\mathbf{u}$ can be represented via the Biot--Savart kernel (see \cite{bertozzi-book}):
\begin{align}\label{eq:vel-kernel2}
     \mathbf{u}(x, \cdot)=(\mathbf{K}\star \omega)(x, \cdot),
\end{align}
where
\begin{align}\label{eq:BSkernel}
    \mathbf{K} (x)&=(K_1(x), K_2(x)),\notag\\
    K_1(x)&={-\frac{1}{2\pi}} \frac{x_2}{|x|^2}, \quad  K_2(x)={\frac{1}{2\pi}} \frac{x_1}{|x|^2}.
\end{align}

In this work, we study the \emph{stable} Boussinesq system, namely system \eqref{eq:2DBouss} near the spectrally stable steady profile
\begin{align}\label{eq:strat}
    (\bar{\rho} (x), \bar{\mathbf{u}}(x))= (\bar\rho_{\text{stable}}(x), \mathbf{0}), \quad \text{with} \quad \bar\rho_{\text{stable}}(x) = -x_2.
\end{align}
In this context, system \eqref{eq:2DBouss} is equipped with initial data of the form
\begin{align}\label{eq:data-stable-pert}
    \rho(x,0)=-x_2+\rho_{\rm in}(x), \quad \mathbf{u}(x, 0)=\mathbf{0}+\mathbf{u}_{\rm in}(x).
\end{align}
In what follows, we adopt the vorticity formulation of system \eqref{eq:2DBouss}:
\begin{align}\label{eq:2Dbouss-vorticity}
    \begin{cases}
        \partial_t \rho + \textbf{u} \cdot \nabla \rho = 0, \\
        \partial_t \omega + \mathbf{u}\cdot \nabla \omega = - \partial_{x_1} \rho, \\
        \mathbf{u}=(-\de_{x_2}\psi, \de_{x_1}\psi), \quad \Delta \psi=\omega.
    \end{cases}
\end{align}
We say that $(\rho_{\text{stable}}(x,t), \omega_{\text{stable}}(x,t))$ is a perturbation solution to the \emph{stable} 2D Boussinesq system if it satisfies
\begin{equation}\label{eq:bouss-stable}
\begin{cases}
    \de_t \rho_{\text{stable}} +\bold u [\omega_{\text{stable}}] \cdot \nabla \rho_{\text{stable}}=u_2[ \omega_{\text{stable}}],\\
    \de_t \omega_{\text{stable}}+\bold u [\omega_{\text{stable}}] \cdot \nabla \omega_{\text{stable}}=-\partial_{x_1}\rho_{\text{stable}}, \\
    \mathbf{u}_{\text{stable}}=(-\de_{x_2}\psi_{\text{stable}}, \de_{x_1}\psi_{\text{stable}}), \quad \Delta \psi_{\text{stable}}=\omega_{\text{stable}}.
\end{cases}
\end{equation}
Equivalently, this corresponds to studying solutions $(\rho, \omega)$ to \eqref{eq:2Dbouss-vorticity} with initial conditions
\begin{align}\label{eq:data-stable-init}
     (\rho(x,0), \omega (x,0))=(-x_{2} + \rho_{\text{stable}}(x,0), \omega_{\text{stable}}(x,0)).
\end{align}
Furthermore, we restrict our attention to the strong solutions of the stable 2D Boussinesq equations $(\rho(t), \mathbf{u}[\omega(t)]) \in C([0, T_\epsilon]; H^1(\mathbb{R}^2) \cap C^{\alpha}(\mathbb{R}^2)) \cap C^1([0, T_\epsilon]; L^2(\mathbb{R}^2))$ for any $0<\alpha < 1$ that are almost classical, in the sense that $(\rho(t), \mathbf{u}[\omega(t)]) \in C([0, T_\epsilon]; H^1(\mathbb{R}^2) \cap C^{1}(\mathbb{R}^2\setminus B_\delta (0)))$ for any ball $ B_\delta (0)$. In particular, such solutions are classical everywhere except at the origin.
%
%
\subsection{Motivations and existing literature}
In contrast to the extensive recent literature on supercritical ill-posedness and non-uniqueness for fluid equations, this work proves ill-posedness - specifically instantaneous norm explosion - at the critical regularity.

\medskip

\noindent \textbf{Previous results on the Boussinesq equations.} 
From a mathematical standpoint, the 2D Boussinesq system \eqref{eq:2Dbouss-vorticity} has been the focus of intense investigation in recent years. A primary open question concerns the global regularity versus finite-time singularity formation for smooth, finite-energy initial data. In the unforced regime, finite-time blow-up was established by Elgindi and Pasqualotto \cite{pasqualotto2023} for velocity fields in $C^{1,\alpha}$ under \emph{unstable} background configurations (see also \cite{CMZeuler, CMZbouss} for the forced smooth scenario). Even when the system is linearized around spectrally stable profiles as in \eqref{eq:bouss-stable}, determining whether smooth solutions persist globally or break down in finite time remains an open problem, see \cite{wid24, helena, elgindi_bouss2015}.

Another important question is to quantify the long-time growth of Sobolev norms, a phenomenon linked to small-scale formation. In the non-viscous setting addressed here, polynomial growth of vorticity and density gradients was proven under suitable symmetry assumptions on the initial data \cite{kiselev2025}, as well as in the presence of background shear flows \cite{BBCZD2021}. Lastly, spectral instability of the stable Boussinesq dynamics \eqref{eq:bouss-stable} linearized around traveling wave solutions has been proved in \cite{BMP26}.

\medskip

\noindent \textbf{Ill-posedness at critical regularity.} 
Ill-posedness for incompressible fluid models in $L^\infty$--based critical spaces has been widely investigated following the groundbreaking works of Bourgain \& Li \cite{bourgain2015} and Elgindi \& Masmoudi \cite{elgindi2020}. In particular, the latter established mild ill-posedness at the critical $L^\infty$ regularity for the Boussinesq equations \eqref{eq:2Dbouss-vorticity}, driven by the unboundedness of the Riesz transform on $L^\infty$. This mechanism was subsequently shown to produce strong ill-posedness (norm inflation) in \cite{bianchini24}, by deriving a leading-order model that captures non-linear effects inspired by the framework in \cite{elgindi2021}.

\medskip

\noindent \textbf{Novelty of this work.} 
In this paper, we establish the sharpest possible form of ill-posedness within the energy-critical setting: measured in the critical norm, our solution undergoes instantaneous blow-up at all positive times while remaining the unique strong solution originating from the chosen initial data. A related result was previously obtained by the authors in collaboration with C\'ordoba for the stable IPM equations in $H^2(\mathbb{R}^2)$ \cite{BCMZ2025}. From that work, we adopt the key insight of engineering an initial profile with a localized ``hole'', designed to suppress the stabilizing effects of stratification.

However, unlike the IPM framework, the present setup must handle the full non-linear coupling between density and vorticity. This density-vorticity interaction is a central theme of this paper. Indeed, if the objective was merely to demonstrate $H^2$ ill-posedness for the 2D Boussinesq system near equilibrium \eqref{eq:2Dbouss-vorticity}, one could simply set the initial density to zero and embed the celebrated Bourgain--Li construction \cite{bourgain2015} for $H^2$ Euler ill-posedness directly into the system, treating density as a 'passively' transported scalar. First, this reduction fails for the \emph{stable} Boussinesq system \eqref{eq:bouss-stable}, where the stable stratification damps out velocity deformations. Moreover, even if the construction of Bourgain-Li could be adapted to our setting, the resulting instability would be inherited from the Euler dynamics.

In contrast, the present work provides a novel instability mechanism driven by density-vorticity feedback. Because our initial vorticity vanishes identically ($\omega(x,0) = 0$), the critical loss of regularity originates fully from this non-linear interaction, representing a mechanism fundamentally distinct from the $H^1$ Euler instability.

Compared to stable IPM, this density-vorticity interaction introduces non-monotonic growth in the velocity deformation. Consequently, controlling the time evolution of the velocity perturbation at the origin requires a different approach. Another key distinction lies in the limiting solution's regu\-larity: since the velocity deformation $\partial_{x_1} u_1(0, t)$ of our approximating sequence diverges, the limiting flow possesses an unbounded velocity gradient. Thus, we construct a strong solution to \eqref{eq:bouss-stable} rather than a classical one as in the IPM setting \cite{BCMZ2025}.

Finally, we anticipate that a similar mechanism can be exploited in future work for the 3D axisymmetric Euler equations with swirl, given their well-known analogy with the 2D Boussinesq system \cite{pasqualotto2023}.


\subsection{Main result}
Our main result reads as follows.
\begin{thm}[Non existence and strong ill-posedness in $H^2$ for \eqref{eq:2DBouss} near the stable profile $-x_2$]\label{thm:main}
\label{thm:boussinesq_illposedness}
For any $\epsilon > 0$, there exist $T_\epsilon > 0$ and $\rho_{\rm in} \in H^2(\mathbb{R}^2)$ with $\|\rho_{\rm in}\|_{H^2} \le \epsilon$ such that the 2D Boussinesq system \eqref{eq:2Dbouss-vorticity} with initial data $\rho(x, 0) = -x_2 + \rho_{\rm in}$ and $\omega(x, 0) = 0$ admits a strong solution $(\rho, \omega)$ on $[0, T_\epsilon]$ satisfying:
\begin{enumerate}[{\rm(i)}]
    \item \textbf{Regularity:} $(\rho+x_2, \omega) \in C([0, T_\epsilon]; (H^1 \cap C^1) \times (L^2 \cap C^0)) \cap C^1([0, T_\epsilon]; L^2 \times \dot H^{-1})$.
    \item \textbf{Instantaneous norm inflation:} For all $t \in (0, T_\epsilon]$,
    \begin{equation}
        \|\rho(t) + x_2\|_{H^2(\mathbb{R}^2)} = \infty \quad \text{and} \quad \|\omega(t)\|_{H^1(\mathbb{R}^2)} = \infty.
    \end{equation}
    \item \textbf{Uniqueness:} $(\rho+{\color{blue}x_2}, \omega)$ is unique among all solutions in $C([0, T_\epsilon]; H^1 \times L^2) \cap C^1([0, T_\epsilon]; L^2 \times \dot H^{-1})$ departing from $(\rho(x, 0)+{\color{blue}x_2}, \omega(x, 0))$.
\end{enumerate}
\end{thm}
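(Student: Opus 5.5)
The plan is to adapt the iterative ``hole'' construction of \cite{BCMZ2025} to the coupled density-vorticity dynamics. We build the initial perturbation as a convergent series
\[
\rho_{\rm in}=\sum_{k\ge 1}\rho_k,
\]
with $\rho_k$ supported in dyadic-type annuli $\{\lambda_k^{-1}\lesssim |x|\lesssim \lambda_k^{-1}\}$ shrinking to the origin, and with $\|\rho_k\|_{H^2}\le \epsilon 2^{-k}$. Take $\rho_k=a_k\lambda_k^{-2}f(\lambda_k x)$ with $a_k\to 0$ slowly. The profile $f$ is odd in $x_1$ and even in $x_2$, chosen so that $\partial_{x_1}\rho_k$ generates, through $\partial_t\omega=-\partial_{x_1}\rho$, a vorticity whose Biot--Savart velocity has a hyperbolic stagnation point at the origin with $\partial_{x_1}u_1(0,t)>0$. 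We also make $\rho_{\rm in}+(-x_2)$ have a ``hole'': we add a fixed cutoff term $x_2\chi(x)$ so that the total density vanishes near the origin. Then $u_2$ no longer feeds back through the background there, and the stabilizing term is neutralized locally. The symmetries (odd in $x_1$ for $\rho+x_2$, odd in $x_1$ for $\omega$) are preserved by the flow and keep the origin a stagnation point.

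First, we derive the leading-order dynamics for a single layer. Since $\omega(0)=0$, we have $\omega(t)\approx -t\,\partial_{x_1}\rho_{\rm in}$ for short times. Hence the deformation satisfies
\[
\partial_{x_1}u_1(0,t)\approx t\sum_k c\,a_k\log(\cdot)\text{-type contributions},
\]
which we arrange to diverge when summed over $k$. This follows the Bourgain--Li logarithmic mechanism: each annulus contributes an amount of order $t\,a_k$ independent of scale, and $\sum a_k=\infty$ while $\sum a_k^2<\infty$ for the $H^2$ bound.

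Next, at each scale $k$ we run an approximate solution $(\rho^{(n)},\omega^{(n)})$ with only the first $n$ layers. These are smooth, so local well-posedness in $H^s$ applies. We prove uniform-in-$n$ estimates in $C_tH^1\times L^2$ and $C^1$-away-from-the-origin bounds. The layer with index $k$ is stretched by the flow of the coarser layers over times $t\gtrsim$ (its own time scale), which grows $\|\nabla^2\rho\|_{L^2}$ by a factor of roughly $\exp\!\big(\int_0^t\partial_{x_1}u_1(0,s)\,ds\big)$. Choosing $\lambda_k$ extremely rapidly increasing (super-exponentially) decouples the layers: fine layers see coarse ones as a nearly linear hyperbolic flow, and coarse ones see fine ones as negligible $L^2$ perturbations. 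We then pass to the limit $n\to\infty$ using a Cauchy estimate in the energy class $H^1\times L^2$ (with $\dot H^{-1}$ for $\partial_t\omega$). This gives regularity (i), and (ii) follows by lower semicontinuity since, for any $t>0$, the stretched $H^2$ norms of layer $k$ blow up as $k\to\infty$.

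Uniqueness (iii) we would prove by an energy estimate on the difference of two solutions in $L^2\times \dot H^{-1}$, i.e.\ velocity in $L^2$. The obstacle is that $\nabla u$ is not bounded at the origin, only log-Lipschitz-like. We would therefore use a Yudovich-type argument, exploiting $\omega\in L^2\cap C^0$ and $\rho\in C^1$ for the reference solution together with an Osgood lemma. The main difficulty, I expect, is the second step: the density-vorticity feedback makes the deformation non-monotone in time, since $\omega$ oscillates via the linear gravity wave $\partial_t^2\omega\approx -\partial_{x_1}^2\Delta^{-1}\omega$ once the hole is filled by transport. Keeping $\partial_{x_1}u_1(0,t)$ positive and divergent on $(0,T_\epsilon]$ requires choosing the time scales so that each layer acts before rotation reverses it. I would first try to control this with explicit Duhamel expansions to second order in $t$ in each layer.
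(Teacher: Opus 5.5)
\textbf{The central gap is the mechanism for (ii).} You try to obtain both divergent strain and $H^2$ blow-up from a single family of layers. The heuristic that layer $k$ contributes $\sim t\,a_k$ to $\partial_{x_1}u_1(0,t)$ independently of the others holds only at $t=0$, as in Lemma~\ref{lem:stable-exact-sol}.

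\emph{Why it fails after $t=0$.} A fine layer evolves inside the hyperbolic flow $k(t)(x_1,-x_2)$ of the coarser ones, with accumulated strain $M(t)=-\int_0^t k$. Along this flow its vorticity grows like $\int_0^t e^{M(\tau)}d\tau$. But it is carried out along the stretching axis, where the weight $y_1y_2/|y|^4$ is smaller by $e^{-4M}$. Its contribution is therefore $\approx a\,e^{-4M(t)}\int_0^t e^{M(\tau)}d\tau\lesssim a\,t\,e^{-3M(t)}$. This is Lemma~\ref{stabledeformation}; its two-sided bound uses support in the cone $\{|x_2|\ge 2|x_1|\}$ and $x_1x_2\partial_{x_1}\rho\ge 0$.

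\emph{Consequences.} The accumulated strain grows extremely slowly. The paper proves its divergence by contradiction: if $M_n(T_\epsilon)\to M_\infty<\infty$, the increments are $\gtrsim\frac{\nu}{n+1}c_{M_\infty}$, which are not summable. It then needs Lemma~\ref{defshort} to get divergence at every $t>0$, not only at $T_\epsilon$. With strain this weak, the factor $e^{2M_k(t)}$ stretching layer $k$ need not beat the $\ell^2$ decay of $a_k$. Your claim that these same layers have divergent $H^2$ norms for every $t>0$ is therefore unsupported. The paper itself remarks that deformation layers alone need not produce loss of regularity.

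\emph{The missing idea is to separate the two roles.}
\begin{itemize}
\item Deformation layers of amplitude $\frac{\nu}{n+1}$, times an $M$-dependent smallness, drive $M_n(t)\to\infty$. Each is placed at a scale chosen after the previous solution is known (Lemma~\ref{inductiondeform}).
\item Only when $e^{M_n(T_\epsilon/2k)}$ first exceeds $4^{k^2}$ is a high-frequency layer $f(\lambda x)\sin(\lambda N x_1)/(\lambda N^2)$ of $H^2$ size $\sim\epsilon 2^{-k}$ inserted (Lemma~\ref{inductiongrowth}).
\item Its own velocity is $O(N^{-1}\log N)$ in $C^1$ (Lemma~\ref{boundvelocity}), so it does not disturb the strain.
\item Compression in $x_1$ multiplies its $\dot H^2$ norm by $e^{2M}$, giving $\gtrsim\epsilon 2^{-k}4^{k^2}$ for $t\ge T_\epsilon/2k$.
\end{itemize}

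\textbf{Several ingredients of your setup also fail as written.}
\begin{itemize}
\item \emph{Symmetry.} With $f$ odd in $x_1$ and even in $x_2$, $\omega\approx -t\,\partial_{x_1}\rho$ is even in both variables. Then $\partial_{x_1}u_1(0,t)=\frac{1}{\pi}\int\frac{y_1y_2}{|y|^4}\omega\,dy$ vanishes at leading order. That class is also not preserved by the nonlinear terms, and it is incompatible with the hole term $x_2\chi$, which is even in $x_1$. The invariant class is that of Definition~\ref{def:symmetric}: $\rho+x_2$ even in $x_1$ and odd in $x_2$, $\omega$ odd in both variables.
\item \emph{Scaling.} The normalization $a_k\lambda_k^{-2}f(\lambda_k x)$ is subcritical: layer $k$ contributes only $\sim a_k\lambda_k^{-1}$, which is summable. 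You need $a_k\lambda_k^{-1}f(\lambda_k x)$. Once normalized critically, the bound $\|\rho_k\|_{H^2}\le \epsilon 2^{-k}$ contradicts $\sum_k a_k=\infty$; the paper uses amplitudes $\nu/(n+1)$.
\item \emph{The hole.} A cutoff $x_2\chi(x/\delta)$ has $\dot H^2$ norm of order one at every scale. Making the hole with $H^2$-small data needs a logarithmic (capacity) cutoff, which is Lemma~\ref{initialdata}.
\end{itemize}

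\textbf{The difficulty you single out does not arise, and your remedy for it is unavailable.}
\begin{itemize}
\item \emph{No gravity waves in the hole.} For each smooth approximation, $\mathbf{u}(0,t)=0$ and $\mathbf{u}$ is Lipschitz. So the region where the total density vanishes is never filled. Inside it the layers feel no stratification: $\rho$ is transported and $\omega$ is driven by $-\partial_{x_1}\rho$ with no restoring term. This gives the explicit formula \eqref{eq:tilde-omega} and a sign-definite, though non-monotone in time, contribution.
\item \emph{No separate time scales.} The system is invariant under $\rho\mapsto\lambda^{-1}\rho(\lambda x,t)$, $\omega\mapsto\omega(\lambda x,t)$, which fixes $-x_2$ and does not rescale time. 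All layers act on the same interval. Short-time Duhamel expansions therefore cannot replace the exact linear formula, the $O(a^2)$ comparison of Lemma~\ref{gluing2}, and the gluing estimates of Lemma~\ref{gluing1}.
\end{itemize}

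Your Yudovich-type uniqueness argument matches the paper's.
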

\begin{Rmk}[\textbf{Almost classical solutions}]
    Although the solution constructed in Theorem \ref{thm:main} is a strong solution rather than a classical one, its deviation from classical regularity is minimal. Indeed, the density equation is satisfied classically everywhere, while the vorticity equation holds in the classical sense on $\mathbb{R}^2 \setminus \{0\}$. The sole obstruction to classical smoothness occurs at the origin, where $\omega(\cdot, t)$ lacks spatial differentiability. However, since
\begin{equation}
    \partial_{t}\omega(0,t) = 0 \quad \text{and} \quad \mathbf{u}[\omega](0,t) = 0,
\end{equation}
the non-differentiability at $x=0$ does not prevent the vorticity equation from holding classically along the Lagrangian flow map (i.e., along particle trajectories).
\end{Rmk}
\begin{Rmk}[\textbf{Finite-energy perturbations}]
   Note that the initial density $\rho(x,0)$ in Theorem~\ref{thm:main} has \emph{infinite energy}. This explains why the instantaneous blow-up is framed in terms of the perturbation, yielding $\|\rho (t) + x_2\|_{H^2} = \infty$ for all $t > 0$.
\end{Rmk}
\begin{Rmk}[\textbf{Sub-critical persistence}]
    It is expected that the solution constructed in Theorem~\ref{thm:boussinesq_illposedness} satisfies the subcritical persistence property, namely $(\rho+x_2, \omega) \in H^{2-\delta}\times H^{1-\delta}(\mathbb{R}^2)$ for any $\delta \in (0, 1)$. Although tracking this subcritical norm was omitted to keep the estimates simple, it can be obtained by  following the steps of our proof.
\end{Rmk}
\begin{Rmk}[\textbf{Vacuum state versus background stratification}]
Notice that Theorem~\ref{thm:main} yields a nonexistence and strong ill-posedness result for the original two-dimensional Boussinesq equations \eqref{eq:2Dbouss-vorticity} with initial data $(\rho(x, 0), \omega(x, 0)) = (-x_2 + \rho_{\rm in}(x), 0)$, where the perturbation $(\rho_{\rm in}, 0)$ is arbitrarily small in $H^2(\mathbb R^2)$. 

One might naturally wonder whether a similar ill-posedness mechanism holds for the system near equilibrium \eqref{eq:2Dbouss-vorticity} when the full initial density has an arbitrarily small $H^2$ norm. The answer is affirmative: in that setting, the construction of the initial data becomes even simpler, as Lemma~\ref{initialdata} is no longer required to neutralize the background stratification, while the rest of the proof applies with minor adaptations.
\end{Rmk}
\subsection{Conventions and notation}
As usual, we will exploit the symmetries of the equation. 
\begin{Def}\label{def:symmetric}
    A density-vorticity pair $(\rho (x, t), \omega (x, t))$ is called symmetric for $t\in[0,T]$ if $\rho(x_{1},x_{2},t)=\rho(-x_{1},x_{2},t),$ $\rho(x_{1},x_{2},t)=-\rho(x_{1},-x_{2},t)$ and  $\omega(x_{1},x_{2},t)=-\omega(-x_{1},x_{2},t),$ $\omega(x_{1},x_{2},t)=-\omega(x_{1},-x_{2},t)$. In other words, $\rho(x_1, x_2, t)$ is \emph{even} in $x_1$ and \emph{odd} in $x_2$, while $\omega (x_1, x_2, t)$ is \emph{odd} across both $x_1$ and $x_2$. 
\end{Def}
\begin{Rmk}
    If the initial data $(\rho(x,0), \omega(x,0))$ is symmetric and $(\rho(x,t), \omega (x, t))$ is the regular solution to \eqref{eq:2Dbouss-vorticity} with such initial data, then $(\rho(x,t), \omega (x, t))$ is symmetric. Furthermore,  $\mathbf{u}[\omega](x=0,t)=0.$
\end{Rmk}
\begin{itemize}
    \item We use the symbol $\lesssim$ (resp. $\gtrsim$) to denote $\le C$ (resp. $\ge C$), where the constant $C>0$ is independent of the relevant parameters.
    \item The symbol $B_r(x_0)$ denotes a disk of radius $r>0$, centered in $x_0 \in \R^2$. 
\end{itemize}
\section{Constructing a deformation at the origin}
To produce a strong deformation at the origin, which in turn leads to strong ill-posedness and non-existence in $H^2(\R^2)$, we construct an initial perturbation that cancels the stabilizing effect of the background stable stratification \eqref{eq:strat} in a neighborhood of the origin. This is the content of the following lemma, whose proof is in \cite[Lemma 2.1]{BCMZ2025}.
\subsection{Initial data and approximate solution in the stable setting}
\begin{lem}\label{initialdata}
    For any $0 < \eps_0 < 1$, we can construct a symmetric function $\rho_{\rm in}(x)$ satisfying the following:
    \begin{enumerate}
        \item $\rho_{\rm in} (x) \in C^\infty (\R^2)$  and $\supp (\rho_{\rm in}) \subset  B_1(0)$; 
        \item there exists $0<\delta_0 < 1$ such that $\rho_{\rm in}(x)=x_2$ for $x \in B_{\delta_0}(0)$;
        \item the $H^2$ norm is arbitrarily small: $\|\rho_{\rm in}\|_{H^2} \le \eps_0$.
    \end{enumerate}
\end{lem}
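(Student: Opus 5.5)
We construct $\rho_{\rm in}$ by truncating $x_2$ with a cutoff of logarithmic type, exploiting the fact that in two dimensions $H^2$ sits at the critical scaling for functions whose gradient is bounded. A plain smooth cutoff $x_2\chi(x/\delta)$ is not enough. Since $\nabla^2(x_2\chi(x/\delta))$ has size $\delta^{-1}$ on a region of area $\delta^2$, its $\dot H^2$ norm is of order one uniformly in $\delta$ and does not shrink. We therefore use the logarithmic trick that makes $\log\log$-type functions small in the critical space.

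Fix a smooth radial function $\eta:\R\to[0,1]$ with $\eta(s)=1$ for $s\le 0$ and $\eta(s)=0$ for $s\ge 1$. For a large parameter $N$ and small $\delta_0$ with $\delta_0^{1/N}$... more concretely, define
\[
\phi(x)=\eta\!\left(\frac{\log(|x|/\delta_0)}{\log(1/\delta_0)}\right),
\]
which equals $1$ on $B_{\delta_0}$ and $0$ outside $B_1$. It is smooth, since it is constant near $|x|=\delta_0$ and near $|x|=1$. Set $\rho_{\rm in}(x)=x_2\phi(x)$. Because $\phi$ is radial, $\rho_{\rm in}$ is even in $x_1$ and odd in $x_2$, so it is symmetric in the sense of Definition \ref{def:symmetric}, and properties 1 and 2 hold at once.

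For property 3, write $L=\log(1/\delta_0)$. On the annulus $\delta_0\le|x|\le 1$ we have $|\nabla\phi|\lesssim 1/(L|x|)$ and $|\nabla^2\phi|\lesssim 1/(L|x|^2)$. Hence
\[
|\nabla^2\rho_{\rm in}|\lesssim |x_2||\nabla^2\phi|+|\nabla\phi|\lesssim \frac{1}{L|x|}.
\]
It follows that
\[
\|\nabla^2\rho_{\rm in}\|_{L^2}^2\lesssim L^{-2}\int_{\delta_0}^1 \frac{r\,dr}{r^2}=L^{-1}.
\]
The lower-order norms are bounded by $O(1)$, since $\rho_{\rm in}$ is supported in $B_1$ with $|\rho_{\rm in}|\le 1$. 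This bound is not small, so the lower-order part needs separate treatment.

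To make the whole $H^2$ norm small, we rescale. Set $\rho_{\rm in}^\lambda(x)=\lambda\,\rho_{\rm in}(x/\lambda)$ with $\lambda\le 1$. This function still equals $x_2$ on $B_{\lambda\delta_0}$ and is supported in $B_\lambda\subset B_1$. The $\dot H^2$ seminorm is scale-invariant for this scaling, while
\[
\|\rho^\lambda_{\rm in}\|_{L^2}=\lambda^2\|\rho_{\rm in}\|_{L^2}\quad\text{and}\quad \|\nabla\rho^\lambda_{\rm in}\|_{L^2}=\lambda\|\nabla\rho_{\rm in}\|_{L^2}.
\]
The key step is to check that $\|\nabla\rho_{\rm in}\|_{L^2}\lesssim 1$ uniformly in $L$, which holds because $|\nabla\rho_{\rm in}|\lesssim 1$ on $B_1$. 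We then first choose $\delta_0$ so that $CL^{-1/2}\le\eps_0/2$, and then choose $\lambda$ small enough that the lower-order terms are at most $\eps_0/2$. The new radius $\lambda\delta_0<1$ plays the role of $\delta_0$ in the statement.

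The only real obstacle is the critical scaling of $\dot H^2$ in two dimensions, and the logarithmic cutoff removes it.
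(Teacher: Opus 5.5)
Your proof is correct. The paper gives no proof of this lemma and only cites \cite[Lemma 2.1]{BCMZ2025}, so I cannot compare your route with its argument; your construction works as a self-contained proof. The logarithmic cutoff $\rho_{\rm in}=x_2\,\eta\bigl(\log(|x|/\delta_0)/\log(1/\delta_0)\bigr)$ satisfies $\|\nabla^2\rho_{\rm in}\|_{L^2}^2\lesssim L^{-2}\int_{\delta_0}^1 r^{-1}\,dr=L^{-1}$. The $\dot H^2$-invariant rescaling $\lambda\rho_{\rm in}(x/\lambda)$, done after $\delta_0$ is fixed, then makes the $L^2$ and $\dot H^1$ parts small. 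It also preserves the symmetry, the identity $\rho=x_2$ on $B_{\lambda\delta_0}$, and the support condition.

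For comparison, there is an equivalent discrete route, closer to the dyadic sums the authors use elsewhere (e.g.\ $\rho_{K,\epsilon}$ in Lemma~\ref{lem:stable-exact-sol}). Take $\frac{x_2}{K}\sum_{j=1}^{K}\chi(2^jx)$ with $\chi\equiv1$ on $B_{1/2}$ and $\chi\equiv0$ off $B_1$.
\begin{itemize}
\item Each summand is a critical rescaling of one profile whose Hessian lives on its own dyadic annulus. Disjointness of supports therefore gives an $\dot H^2$ norm of $O(K^{-1/2})$.
\item The $j$-th summand has $H^1$ norm $O(2^{-j})$, so the lower-order part is $O(K^{-1})$ automatically and no separate rescaling step is needed.
\end{itemize}
Both constructions rest on the same fact, that a point has zero capacity in two dimensions. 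Your continuous version needs the extra rescaling only because you fixed the outer radius at $1$.

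A few small corrections:
\begin{itemize}
\item Your reason for smoothness is misplaced: $\phi$ need not be constant just outside $|x|=\delta_0$. The correct reason is that $\phi$ is a composition of smooth maps on $\R^2\setminus\{0\}$ and is identically $1$ on the neighbourhood $B_{\delta_0}$ of the origin.
\item The uniform-in-$L$ bound on $\|\nabla\rho_{\rm in}\|_{L^2}$ is not needed, since $\lambda$ is chosen after $\delta_0$.
\item The stray fragment about $N$ and $\delta_0^{1/N}$ should be deleted.
\end{itemize}
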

We will need the following local in time estimate.
\begin{lem}[Local well-posedness]
For $s > 1$, let $({\rho}_{\rm pert}(x, t), {\omega}_{\rm pert}(x,t))$ be a solution to the stable 2D Boussinesq equations \eqref{eq:bouss-stable} with initial data $({\rho}_{\rm pert}(x,0), {\omega}_{\rm pert}(x,0)) \in (H^{s+1} \times H^s \cap \dot H^{-1}) (\R^2)$. Then, the following estimate holds:
\begin{align}
       \|{\rho}_{\rm pert}(t)\|_{H^{s+1}}^2+ \|{\omega}_{\rm pert}(t)\|_{H^s}^2\lesssim (\|{\rho}_{\rm pert}(0)\|_{H^{s+1}}^2+ \|{\omega}_{\rm pert}(0)\|_{H^s}^2) \exp\left(C_{s}(\int_0^t \|\omega\|_{H^s}+\|\rho\|_{H^{s+1})} \, d\tau \right).
    \end{align}
In particular, if $\|{\rho}_{\rm pert}(0)\|_{H^{s+1}}^2+ \|{\omega}_{\rm pert}(0)\|_{H^s}^2 \lesssim a \in \R_+$, then $({\rho}_{\rm pert}(x, t), {\omega}_{\rm pert}(x,t))$ exists for all $t \in [0,T_{pert}]$, with $T_{pert} \sim a^{-\frac{1}{2}}$.
\end{lem}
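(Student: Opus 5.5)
We prove the estimate by a standard energy method in $H^{s+1}\times H^s$, applied to the perturbation system \eqref{eq:bouss-stable}. The first observation is that the two linear terms $u_2[\omega_{\rm pert}]$ in the density equation and $-\partial_{x_1}\rho_{\rm pert}$ in the vorticity equation are not skew-adjoint in the plain $H^{s+1}\times H^s$ pairing. So we do not try to cancel them. Instead we treat them as bounded perturbations. Since $u_2=\partial_{x_1}\Delta^{-1}\omega$, we have $\|u_2\|_{H^{s+1}}\lesssim \|\omega\|_{H^s}+\|\omega\|_{\dot H^{-1}}$. This is where the assumption $\omega_{\rm pert}(0)\in\dot H^{-1}$ enters. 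We then propagate the $\dot H^{-1}$ norm of $\omega$, equivalently $\|u\|_{L^2}$, along the flow. Energy conservation for the perturbed system gives $\frac{d}{dt}(\|u\|_{L^2}^2+\|\rho_{\rm pert}\|_{L^2}^2)=0$, because the cross terms $\int u_2\rho$ cancel. This is the one place where the stable sign helps. Hence $\|u(t)\|_{L^2}$ stays bounded by the initial data.

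Next we apply $\Lambda^{s+1}=(1-\Delta)^{(s+1)/2}$ to the $\rho$-equation and $\Lambda^s$ to the $\omega$-equation, and pair them in $L^2$ with $\Lambda^{s+1}\rho$ and $\Lambda^s\omega$ respectively. The transport terms are handled with the Kato--Ponce commutator estimate together with $\nabla\cdot u=0$:
\begin{align*}
\bigl|\langle [\Lambda^{s+1},u\cdot\nabla]\rho,\Lambda^{s+1}\rho\rangle\bigr| &\lesssim \bigl(\|\nabla u\|_{L^\infty}\|\rho\|_{H^{s+1}}+\|u\|_{H^{s+1}}\|\nabla\rho\|_{L^\infty}\bigr)\|\rho\|_{H^{s+1}},
\end{align*}
and the analogous bound holds for $\omega$ at level $s$. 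For $s>1$, Sobolev embedding gives $\|\nabla\rho\|_{L^\infty}\lesssim\|\rho\|_{H^{s+1}}$, and the bound $\|\nabla u\|_{L^\infty}\lesssim \|\omega\|_{H^s}+\|u\|_{L^2}$ holds as well. The linear coupling terms contribute at most $\|\rho\|_{H^{s+1}}\|\omega\|_{H^s}$, plus a lower-order $\|u\|_{L^2}$ part, which is harmless. Setting $E=\|\rho\|_{H^{s+1}}^2+\|\omega\|_{H^s}^2$ (plus the conserved $L^2$ energy), we obtain
\[
\frac{d}{dt}E\lesssim \bigl(1+\|\omega\|_{H^s}+\|\rho\|_{H^{s+1}}\bigr)E .
\]
Grönwall's inequality then gives the stated exponential bound. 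The additive $1$, which comes from the linear terms, is absorbed into $C_s$ on short time intervals.

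For the existence time we use $\|\omega\|_{H^s}+\|\rho\|_{H^{s+1}}\lesssim E^{1/2}$. The differential inequality becomes $E'\lesssim E^{3/2}$ (for $E\gtrsim 1$), so $E$ stays bounded by $2a$ up to a time $T\sim a^{-1/2}$. Existence on this interval then follows from a standard approximation scheme, either mollified equations or a Picard iteration on the transported system. The a priori bound passes to the limit by compactness, and the continuation criterion closes the argument.

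The main obstacle is the linear term $u_2[\omega]$ in the $H^{s+1}$ estimate for $\rho$. It requires control of low frequencies of $\omega$, which the $H^s$ norm does not provide, so everything hinges on propagating the $\dot H^{-1}$ norm via energy conservation. A second option is to use a symmetrized energy. We would first try the energy identity $\frac{d}{dt}(\|u\|_{L^2}^2+\|\rho\|_{L^2}^2)=0$. If the $\rho$ component caused trouble at low frequency, we would fall back on bounding $\|u\|_{L^2}$ by a Grönwall argument using $\|\rho\|_{L^2}$.
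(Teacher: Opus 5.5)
There is a genuine gap, located where you decide not to cancel the linear terms.

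\textbf{The gap.} Treating $u_2[\omega_{\rm pert}]$ and $-\partial_{x_1}\rho_{\rm pert}$ as bounded perturbations gives $E'\lesssim(1+\|\omega\|_{H^s}+\|\rho\|_{H^{s+1}})E$, i.e.\ an extra factor $e^{Ct}$.

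This factor cannot be ``absorbed into $C_s$'': the integral $\int_0^t(\|\omega\|_{H^s}+\|\rho\|_{H^{s+1}})\,d\tau$ can be tiny while $t$ is not. It can only go into the implicit constant, and only for $t=O(1)$.

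For the second claim this is fatal when $a<1$. Then $T_{pert}\sim a^{-1/2}\gg 1$. But $E'\lesssim E+E^{3/2}$ keeps $E\lesssim a$ only for $t=O(1)$, and certifies existence only up to $t\sim\log(1/a)$. Your caveat ``$E'\lesssim E^{3/2}$ for $E\gtrsim 1$'' is exactly this loss. It leaves the small-data regime unproved, and that is the regime relevant for perturbations.

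\textbf{The missing idea.} The coupling is exactly skew, so it should be cancelled, not estimated. The linear part $(\rho,\mathbf u)\mapsto(u_2,-\mathbb P(0,\rho))$ is a Fourier multiplier that is skew-adjoint on $L^2$; this is your own energy identity. Hence it stays skew after applying $(1-\Delta)^{\sigma/2}$ for any $\sigma$.

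Equivalently, in the variables $(\rho,\omega)$, with $|D|=(-\Delta)^{1/2}$ and $u_2=\partial_{x_1}\Delta^{-1}\omega$,
\[
\bigl\langle |D|^{s+1}u_2,|D|^{s+1}\rho\bigr\rangle=-\bigl\langle |D|^{2s}\partial_{x_1}\omega,\rho\bigr\rangle=\bigl\langle |D|^{s}\omega,|D|^{s}\partial_{x_1}\rho\bigr\rangle .
\]
This cancels the term $-\langle |D|^s\partial_{x_1}\rho,|D|^s\omega\rangle$ coming from the vorticity equation.

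So take $E=\|\rho\|_{H^{s+1}}^2+\|\mathbf u\|_{H^{s+1}}^2$, which is equivalent to $\|\rho\|_{H^{s+1}}^2+\|\omega\|_{H^s\cap\dot H^{-1}}^2$. The linear terms then drop out identically. Only the transport commutators remain, giving $E'\lesssim(\|\nabla\mathbf u\|_{L^\infty}+\|\nabla\rho\|_{L^\infty})E$. This is the estimate of \cite[Theorem 4.2]{elgindi_bouss2015} that the paper quotes.

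The paper then uses $\|\nabla\mathbf u\|_{L^\infty}\lesssim\|\omega\|_{H^s}$. This gives the displayed bound with no additive constant. It also gives $E'\lesssim E^{3/2}$ for all sizes of $E$, hence $T_{pert}\sim a^{-1/2}$. The symmetrized energy you list as a ``second option'' is therefore what is actually needed.

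\textbf{Two smaller remarks.}

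First, estimating $\mathbf u$ in $H^{s+1}$ rather than $\omega$ in $H^s$ also removes an imprecision in your vorticity step. There, the literal analogue of your $\rho$-commutator bound contains $\|\mathbf u\|_{H^s}\|\nabla\omega\|_{L^\infty}$, which $H^s$ does not control for $1<s\le 2$. If you stay with $\omega$, write $\mathbf u\cdot\nabla\omega=\nabla\cdot(\mathbf u\,\omega)$, which produces $\|\mathbf u\|_{H^{s+1}}\|\omega\|_{L^\infty}$ instead.

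Second, you are right to keep the conserved quantity $\|\rho\|_{L^2}^2+\|\mathbf u\|_{L^2}^2$, i.e.\ $\|\omega_{\rm pert}(0)\|_{\dot H^{-1}}$, on the right-hand side. Already the linear flow transfers $\|\omega_{\rm pert}(0)\|_{\dot H^{-1}}$ into $\|\rho_{\rm pert}(t)\|_{L^2}$, so the displayed estimate must be read with that term included.
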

\begin{proof}
    From \cite[Theorem 4.2]{elgindi_bouss2015}, we have the following estimate:
    \begin{align*}
           \|{\rho}_{\rm pert}(t)\|_{H^{s+1}}^2+ \|{\omega}_{\rm pert}(t)\|_{H^s}^2\lesssim (\|{\rho}_{\rm pert}(0)\|_{H^{s+1}}^2+ \|{\omega}_{\rm pert}(0)\|_{H^s}^2) \exp\left(\int_0^t \|\nabla \mathbf u\|_{L^\infty}+\|\nabla \rho\|_{L^\infty} \, d\tau \right).
    \end{align*}
    We conclude by standard Sobolev embedding, observing that
    \begin{align*}
        \|\nabla \mathbf u\|_{L^\infty} \lesssim \|\nabla \mathbf{u}\|_{H^s} \lesssim \|\omega\|_{H^s}.
    \end{align*} 
\end{proof}

Now, we show that the initial density $\rho_{\rm in}$ in Lemma~\ref{initialdata} can be chosen so that $\partial_{x_1}u_1(0,t)<0$ for some $t\in(0,T]$. We set $\omega_{\rm in}=0$ in order to isolate the instability generated by the density-vorticity coupling, rather than by the underlying 2D Euler dynamics. This leads to a mechanism of instability fundamentally different from that described in \cite{bourgain2015}.

\begin{lem}\label{lem:stable-exact-sol}
    For any $0 < \eps_{0} < 1$, there exists a solution $(\rho(x,t), \omega (x, t))$ to \eqref{eq:2Dbouss-vorticity} and $T,\delta>0$ fulfilling, for $t\in [0,T]$
    \begin{enumerate}
        \item $({\rho}_{\rm pert}(x,t), {\omega}_{\rm pert}(x,t))=(\rho (x,t)+x_2, \omega (x, t))$, with $\|{\rho}_{\rm pert}(x,0)\|_{H^2}\leq \epsilon_{0}$, ${\omega}_{\rm pert}(x,0)=0$.
        \item $({\rho}_{\rm pert}(x,t), {\omega}_{\rm pert}(x,t))=(\rho (x,t)+x_2, \omega (x, t)) \in C^\infty (\R^2)$  and $\supp (\rho(x,t)) \cap B_\delta(0)=\emptyset$, 
        \item $\partial_t \partial_{x_1}u_1[\omega](x=0, t)=\partial_t \partial_{x_1}u_1[{\omega}_{\rm pert}](x=0, t)<0$.
    \end{enumerate}
\end{lem}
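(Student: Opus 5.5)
The plan is to build the solution from the data of Lemma~\ref{initialdata}, corrected by a small symmetric bump, and to read off the sign in item~3 at $t=0$ from an explicit integral formula. Continuity in time then extends that sign to a short interval $[0,T]$.

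\textbf{Step 1: data and smooth solution.} Start from the symmetric $\rho_{\rm in}$ of Lemma~\ref{initialdata}, with $\|\rho_{\rm in}\|_{H^2}\le \eps_0/2$ and $\rho_{\rm in}=x_2$ on $B_{\delta_0}(0)$. Add a correction $\eta\varphi$, where $\varphi\in C_c^\infty$ is symmetric in the sense of Definition~\ref{def:symmetric}, is supported in an annulus $\{\delta_0/2<|x|<1\}$, and $\eta$ is a small constant chosen in Step~2. Set $\omega_{\rm in}=0$. The perturbation data are smooth and compactly supported, so they lie in $H^{s+1}\times (H^s\cap \dot H^{-1})$ for every $s$. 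The local well-posedness lemma then gives a smooth solution $(\rho_{\rm pert},\omega_{\rm pert})$ on some $[0,T_0]$, with all $H^s$ norms bounded uniformly there; this gives item~1, and item~2 except for the support claim. Symmetry is propagated, so $\mathbf u(0,t)=0$.

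\textbf{Support near the origin.} The full density $\rho=-x_2+\rho_{\rm in}+\eta\varphi$ vanishes on $B_{\delta_0/2}(0)$ at $t=0$. Since $\rho$ is transported by $\mathbf u$, which is Lipschitz with bounded Lipschitz constant on $[0,T_0]$ and vanishes at the origin, the flow map keeps $B_{\delta}(0)$ inside the set where $\rho(\cdot,0)=0$. Concretely, $|X(t,x)|\le e^{Ct}|x|$, and we take $\delta=\tfrac{\delta_0}{2}e^{-CT}$. This gives the support claim in item~2.

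\textbf{Step 2: the sign at $t=0$.} By the Biot--Savart law, $\partial_{x_1}u_1(0,t)=-\partial_{x_1}\partial_{x_2}\Delta^{-1}\omega(0,t)$, which equals
\begin{equation*}
\partial_{x_1}u_1(0,t)=c\,\mathrm{p.v.}\!\int \frac{y_1y_2}{|y|^4}\,\omega(y,t)\,dy
\end{equation*}
for an explicit constant $c\neq 0$. At $t=0$ we have $\omega=0$, hence $\partial_t\omega=-\partial_{x_1}\rho$. Therefore
\begin{equation*}
\partial_t\partial_{x_1}u_1(0,0)=-c\int \frac{y_1y_2}{|y|^4}\,\partial_{y_1}\rho(y,0)\,dy.
\end{equation*}
This integral is absolutely convergent, because $\partial_{y_1}\rho(\cdot,0)$ vanishes on $B_{\delta_0/2}$ and is compactly supported away from infinity. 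Here $\partial_{y_1}\rho(\cdot,0)=\partial_{y_1}\rho_{\rm in}+\eta\,\partial_{y_1}\varphi$, since $\partial_{y_1}(-y_2)=0$.

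\textbf{Choosing $\eta$ and $\varphi$.} The map $\varphi\mapsto \int \frac{y_1y_2}{|y|^4}\partial_{y_1}\varphi\,dy$ is a linear functional on symmetric functions supported in the annulus. It is nonzero on this class. For example, take $\varphi$ concentrated near a point in the first quadrant, with $x_1\partial_{x_1}$-structure, and extended by the symmetries. One checks that the integrand $\frac{y_1y_2}{|y|^4}\partial_{y_1}\varphi$ has the same symmetry in every quadrant, so the contributions do not cancel. Fix such a $\varphi$ with $\|\varphi\|_{H^2}=1$. Choose the sign of $\eta$ so that the contribution $\eta\times(\cdot)$ has the desired sign. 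Take $|\eta|\le\eps_0/2$, and large enough relative to the fixed contribution of $\rho_{\rm in}$ if that has the wrong sign. This is the delicate point: the $\rho_{\rm in}$ contribution might be sizable. If so, rescale instead: use $\lambda\varphi(\cdot/r)$ with $r$ small, near the inner hole. The kernel contribution then scales like $\lambda r^{-1}$, while $\|\lambda\varphi(\cdot/r)\|_{\dot H^2}\sim\lambda r^{-1}$ and the $L^2$ part is $\lambda r$. This does not give $\dot H^2$-smallness directly, so instead I would simply replace $\rho_{\rm in}$ by $\pm\rho_{\rm in}$ away from $B_{\delta_0}$. More precisely, I would pick the construction of Lemma~\ref{initialdata} and then use the sign freedom of the added bump outside $B_{\delta_0}$ to get a strictly negative value. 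The main obstacle is exactly guaranteeing a strict sign while keeping the $H^2$ bound.

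\textbf{Step 3: persistence of the sign.} The quantity $\partial_t\partial_{x_1}u_1(0,t)$ equals $c\int K(y)\,\partial_t\omega(y,t)\,dy$ with $\partial_t\omega=-\mathbf u\cdot\nabla\omega-\partial_{x_1}\rho$, where $K(y)=\frac{y_1y_2}{|y|^4}$. This expression is continuous in $t$, thanks to the uniform smooth bounds from the local well-posedness lemma and the fact that $K$ is a Calder\'on--Zygmund kernel paired with smooth, decaying functions. Hence it remains negative on some $[0,T]$ with $T\le T_0$, which proves item~3.
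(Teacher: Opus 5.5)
Your Steps 1 and 3 agree with the paper: you propagate symmetry, control the flow exponentially to keep $\supp\rho(\cdot,t)$ away from the origin, and extend the sign by continuity. The gap is Step 2, the only nontrivial point, which you yourself flag as unresolved.

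A single bump $\eta\varphi$ with fixed profile and $|\eta|\le\eps_0/2$ contributes only $O(\eps_0)$ to $\de_t\de_{x_1}u_1(0,0)$. The contribution of $\rho_{\rm in}$, by contrast, is of order one and can have the wrong sign. For instance, for a radial hole $\rho_{\rm in}=x_2\chi(|x|)$ (with $\chi=1$ near $0$ and $\chi=0$ outside $B_1$, which can be made $H^2$-small with a logarithmic cutoff) one computes
\[
-\frac1\pi\int_{\R^2}\frac{y_1y_2}{|y|^4}\,\de_{y_1}\rho_{\rm in}\,dy=-\frac1\pi\cdot\frac{\pi}{4}\int_0^\infty\chi'(r)\,dr=\frac14>0 .
\]
This value does not depend on the cutoff profile or on $\eps_0$. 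It is produced by the forced transition from $x_2$ to $0$, so ``replacing $\rho_{\rm in}$ by $\pm\rho_{\rm in}$ away from $B_{\delta_0}$'' is not an option. Rescaling a single bump does not help either: as you noticed, the deformation functional and the $\dot H^2$ norm are both invariant under $g\mapsto g(\lambda\cdot)/\lambda$.

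The missing idea is that this common invariance is exactly what makes the functional unbounded on $H^2$. On disjointly supported dyadic pieces the functional adds linearly, while $\|\cdot\|_{H^2}^2$ adds. The paper takes
\[
\rho_{K,\epsilon}=\epsilon\sum_{i=1}^K \frac{f(2^ix)}{i2^i},
\]
with $f$ symmetric, $\supp f\subset B_2\setminus B_1$, and $x_1x_2\de_{x_1}f\ge0$. Then $\|\rho_{K,\epsilon}\|_{H^2}^2\le\epsilon^2\|f\|_{H^2}^2\sum_i i^{-2}\le C\epsilon^2$. Its contribution to $\de_t\de_{x_1}u_1(0,0)$, however, is
\[
-\frac{\epsilon}{\pi}\Big(\sum_{i=1}^K\frac1i\Big)\int\frac{\tau_1\tau_2}{|\tau|^4}\,\de_{\tau_1}f\,d\tau ,
\]
which tends to $-\infty$ as $K\to\infty$ and so dominates whatever $\rho_{\rm in}$ contributes. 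Equal weights spread over many scales would also work; what matters is the mismatch between linear and square-summed growth across scales. Without some multi-scale superposition of this kind, your argument does not give the strict negative sign in item 3.
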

\begin{proof}
    We will first use Lemma \ref{initialdata} to find $\rho_{\rm in}(x)$ such that $\|\rho_{\rm in}(x)\|_{H^2}\leq \frac{\epsilon_{0}}{2}$ with $\rho_{\rm in}(x)=x_{2}$ for $x$ in some small ball $B_{\delta_0}(0)$. Note that, in particular, $-x_{2}+\rho_{\rm in}(x)=0$ for $x\in B_{\delta_0}(0)$, and also that $|\partial_{x_1}u_{1}[\de_{x_1}\rho_{\rm in}](x=0)|< \infty$.
    Furthermore, we consider
    $$\rho_{K,\epsilon}(x)=\epsilon\sum_{i=1}^{K}\frac{f(2^{i}x)}{i2^{i}}, \quad \omega_{K, \delta_0}=0,$$
    where $f(x)\in C^{\infty}$ is even in $x_1$ and odd in $x_2$, such that
    \begin{equation}
        \begin{cases}
        \de_{x_1}f(x)\leq 0, \quad \text{if} \quad x_1 x_2 <0, \\
        \de_{x_1}f(x)\geq 0, \quad \text{if} \quad x_1 x_2 >0,
        \end{cases}
    \end{equation}
    namely $x_1 x_2\de_{x_1}f(x)\geq0$, with
    $$\supp f(x) \subset (B_{2}(0)\setminus B_{1}(0)).$$ 
    Using that $f(2^{i_{1}}x)$ and $f(2^{i_{2}}x)$ have disjoint support if $i_{1}\neq i_{2}$, we have
    $$\|\rho_{K,\delta_0}\|^2_{H^2}=\epsilon^2\sum_{i=1}^{K}\left\|\frac{f(2^{i}x)}{i2^{i}}\right\|^2_{H^2}\leq \epsilon^2 \|f(x)\|^2_{H^2}\sum_{i=1}^{K}\frac{1}{i^2}\leq C\epsilon^2$$
    and therefore, taking $\epsilon$ small, we obtain
    $$\|\rho_{K,\epsilon}\|_{H^2}\leq \frac{\epsilon_{0}}{2}.$$
   
    \noindent Note that, relying on the explicit formula
    \begin{equation}\label{eq:der-u1}
    \de_{x_1} u_1 [\omega](x=0, t) = \frac{1}{\pi} \int_{\R^2}  \frac{y_1 y_2}{|\by|^4} \omega (t, y_1, y_2) \, d y_1 \, d y_2,
    \end{equation}
    and, if $\omega$, $\rho$ are a solution to \eqref{eq:2Dbouss-vorticity}
    \begin{align}
        \de_t\de_{x_1} u_1 [\omega] (x=0, t) & = \frac{1}{\pi} \int_{\R^2}  \frac{y_1 y_2}{|\by|^4} \de_t\omega (t, y_1, y_2) \, d y_1 \, d y_2\notag\\
        &= - \frac{1}{\pi} \int_{\R^2}  \frac{y_1 y_2}{|\by|^4} (\mathbf u \cdot \nabla \omega + \partial_{x_1}\rho) \, d y_1 \, d y_2. \label{eq:time_der_u1}
    \end{align}
    Considering then initial data
\[
\bigl(-x_2 + \rho_{\mathrm{in}} + \rho_{K,\epsilon},\, 0\bigr).
\]
into the above formula, we get
    \begin{align*}
            \de_t\de_{x_1} u_1 [\omega] (x=0, t=0) & = - \frac 1 \pi \int_{\mathbb{R}^2} \frac{y_1 y_2}{|\mathbf y|^4} (\mathbf{u}[\omega]\cdot \nabla \omega+\partial_{y_1}\rho(t=0) \, dy_1 \, dy_2\\
        &= - \frac {1}{\pi} \int_{\mathbb{R}^2} \frac{y_1 y_2}{|\mathbf y|^4} \partial_{y_1}(\rho_{\mathrm{in}} + \rho_{K,\epsilon)} \, dy_1 \, dy_2\\
        & = - \frac{1}{\pi}( \int_{\mathbb{R}^2} \frac{y_1 y_2}{|\mathbf y|^4} [\partial_{y_1}\rho_{\mathrm{in}}+\epsilon\sum_{i=1}^K \frac{(\partial_{y_1} f)(2^i y_1, 2^i y_2)}{i} \, dy_1 \, dy_2)\\
        & =  - \frac{1}{\pi} \int_{\mathbb{R}^2} \frac{y_1 y_2}{|\mathbf y|^4} \partial_{y_1}\rho_{\mathrm{in}} \, dy_1 \, dy_2- \frac{\epsilon}{\pi} \left(\sum_{i=1}^K \frac 1 i \right) \int_{\mathbb{R}^2} \frac{\tau_1 \tau_2}{|\boldsymbol{\tau}|^4}   \partial_{\tau_1} f(\tau_1, \tau_2) \, d\tau_1 \, d\tau_2\\
    \end{align*}
    
    and using $- \tau_1 \tau_2 \partial_{\tau_1} f(\tau_1, \tau_2) \leq 0$ (and the inequality is strict at least in some small set), we deduce that, for any positive number $N>0$, for $K$ big enough, we have
    $$\de_t \partial_{x_1}u_{1}[\omega_{K,\epsilon}](x=0,t=0)=- \frac{1}{\pi} \int_{\mathbb{R}^2} \frac{y_1 y_2}{|\mathbf y|^4} \partial_{y_1}\rho_{\mathrm{in}} \, dy_1 \, dy_2-\epsilon \partial_{x_1}u_{1}[\partial_{x_1} f(x)](x=0)\sum_{i=1}^{K}\frac{1}{i}\leq  -N.$$
    In particular, by choosing $K$ sufficiently large, we obtain
\[
\partial_t \partial_{x_1} u_{1}[\omega](0,0) < 0.
\]

By local well-posedness of \eqref{eq:2Dbouss-vorticity} around the stable profile $-x_2$, there exists $T_1 > 0$ such that
\[
(\rho,\omega) \in C^\infty([0,T_1] \times \mathbb{R}^2).
\]

It remains to establish two properties on a uniform time interval $[0,T]$:
\begin{enumerate}
    \item The perturbation $\rho(\cdot,t)$ remains supported away from the origin.
    \item The derivative condition $\partial_t \partial_{x_1} u_1[\omega](0,t) < 0$ persists.
\end{enumerate}

First, at $t=0$, the initial perturbations $\rho_{\mathrm{in}}$ and $\rho_{K,\epsilon}$ are supported away from the origin, so $\operatorname{dist}(0, \operatorname{supp} \rho(\cdot, 0)) > 0$. The imposed symmetry on $(\rho, \omega)$ implies that
\[
\mathbf{u}[\omega](0,t) = 0 \quad \text{for all } t \in [0,T_1].
\]
Since $\mathbf{u}[\omega](\cdot,t) \in C^1(\mathbb{R}^2)$, there exists $C > 0$ such that
\[
|\mathbf{u}[\omega](x,t)| \le C |x| \quad \text{for } x \text{ near the origin}.
\]
Because $\rho$ is transported along the flow generated by $\mathbf{u}[\omega]$, Grönwall's inequality ensures that the distance from $\operatorname{supp} \rho(\cdot, t)$ to the origin decays at most exponentially fast. Consequently, there exists $\delta > 0$ such that
\[
\operatorname{supp}\rho(\cdot,t) \cap B_\delta(0) = \emptyset \quad \text{for all } t \in [0,T_1].
\]

Second, since $\partial_t \partial_{x_1} u_1[\omega](0,0) < 0$, the continuous dependence of solutions in the $C^\infty$ topology guarantees that the mapping
\[
t \mapsto \partial_t \partial_{x_1} u_1[\omega](0,t)
\]
is continuous on $[0,T_1]$. Thus, by persistence of sign, there exists $T_2 \in (0, T_1]$ such that
\[
\partial_t \partial_{x_1} u_1[\omega](0,t) < 0 \quad \text{for all } t \in [0,T_2].
\]

Setting $T = \min\{T_1, T_2\} > 0$ completes the proof.
\end{proof}

To construct a solution that generates an increasingly strong deformation at the origin, we proceed iteratively. The strength of this deformation is quantified by the accumulated quantity
\begin{equation}
    \int_0^T \partial_{x_1}u_1[\omega](0,t)\,dt.
\end{equation}
Suppose that, at the $j$-th step of the iteration, we have constructed a solution $(\rho_j, \omega_j)$ satisfying
\begin{equation}
    \int_0^T \partial_{x_1}u_1[\omega_j](0,t)\,dt = -M_j,
\end{equation}
for some $M_j > 0$. Our goal is to construct a localized perturbation $(\rho_{\rm pert}(\cdot,0), \omega_{\rm pert}(\cdot,0))$ such that the solution $(\rho_{j+1}, \omega_{j+1})$ to \eqref{eq:2Dbouss-vorticity} corresponding to the updated initial data
\begin{equation}
    \big(\rho_j(\cdot,0) + \rho_{\rm pert}(\cdot,0),\, \omega_j(\cdot,0) + \omega_{\rm pert}(\cdot,0)\big)
\end{equation}
satisfies
\begin{equation}
    \int_0^T \partial_{x_1}u_1[\omega_{j+1}](0,t)\,dt \le -(M_j + c_{M_j}),
\end{equation}
for some positive constant $c_{M_j} > 0$. In other words, each iteration injects an additional amount of deformation while preserving the deformation accumulated in all previous steps.

A key ingredient in this inductive construction is to show that the deformation generated at each stage persists over a short time interval. We establish this stability property in the following intermediate result.

\subsection{Persistence of the deformation at the origin}
%
\begin{lem} \label{stabledeformation}
For any $M,T>0$, suppose that $k(t)$ satisfies 
\begin{align*}
    k(t)<0, \qquad  0< -\int_0^T k(t) \, d t \le M,
\end{align*}
for all $t \in (0,T]$. 
Given a symmetric pair $(\tilde{\rho}_{\rm in}(x), \tilde{\omega}_{\rm in}(x))$ with $x_1 x_{2}\de_{x_1}\tilde{\rho}_{\rm in}(x)\geq0$, $\tilde{\omega}_{\rm in}(x)= 0$,  
with
\begin{align}\label{hp:supp}
\supp (\tilde \rho_{\rm in}(x)) \subset \{|x_{2}|\geq 2|x_{1}|\},
\end{align}
 suppose that $(\tilde \rho (x, t), \tilde \omega (x, t))$ solves
\begin{equation}\label{eq:lemmastabledef}
\begin{cases}
    \de_t \tilde{\rho} + (k(t)(x_{1},-x_{2})) \cdot \nabla \tilde{\rho}=0, \quad t \in [0,T],\\
    \de_t \tilde{\omega} + (k(t)(x_{1},-x_{2})) \cdot \nabla \tilde{\omega}=-\de_{x_1} \tilde{\rho}, \quad t \in [0,T],\\
    (\tilde \rho (x, 0), \tilde \omega (x, 0))=(\tilde \rho_{\rm in}(x), 0).
\end{cases}
\end{equation}
Then $(\tilde{\rho}(x,t), \tilde \omega (x,t))$ is symmetric and, for $t\in[0,T]$, it satisfies:
\begin{align}
    -\de_{x_1} u_1[\tilde{\omega}](x=0,t)&\geq C_1 \de_{x_{1}}u_{1}[\de_{x_{1}} \tilde \rho_\text{in}](x=0)\text{e}^{4\int_0^t k(s) \, ds}\int_0^t \text{e}^{-\int_0^\tau k(s) \, ds} \, d\tau, \label{ineq:lemma1.3-lower} \\
    -\de_{x_1} u_1[\tilde{\omega}](x=0,t)&\leq C_2\de_{x_{1}}u_{1}[\de_{x_{1}} \tilde \rho_\text{in}](x=0)\text{e}^{4\int_0^t k(s) \, ds}\int_0^t \text{e}^{-\int_0^\tau k(s) \, ds} \, d\tau. \label{ineq:lemma1.3-upper}
\end{align}
In particular, we have that
\[
C\de_{x_{1}}u_{1}[\de_{x_{1}} \tilde \rho_\text{in}](x=0)t\geq -\de_{x_1} u_1[\tilde{\omega}](x=0,t)\geq C_{M}\de_{x_{1}}u_{1}[\de_{x_{1}} \tilde \rho_\text{in}](x=0)t
\]
with $C,C_{M}>0$. 
\end{lem}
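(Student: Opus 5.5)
The system \eqref{eq:lemmastabledef} is linear, and the transporting field $k(t)(x_1,-x_2)$ is linear in $x$, so the plan is to solve it exactly along characteristics and then insert the resulting vorticity into formula \eqref{eq:der-u1} for $\de_{x_1}u_1(0,t)$. Set $K(t)=\int_0^t k(s)\,ds$. Since $k<0$, we have $K(t)\le 0$ and $K(t)\ge -M$ on $[0,T]$.

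\textbf{Step 1: explicit solution.} The flow is $X_1(t)=x_1e^{K(t)}$, $X_2(t)=x_2e^{-K(t)}$. Hence
\[
\tilde\rho(x,t)=\tilde\rho_{\rm in}\bigl(x_1e^{-K(t)},x_2e^{K(t)}\bigr),
\qquad
\de_{x_1}\tilde\rho(y,\tau)=e^{-K(\tau)}(\de_{x_1}\tilde\rho_{\rm in})\bigl(y_1e^{-K(\tau)},y_2e^{K(\tau)}\bigr).
\]
Integrating the vorticity equation along the characteristic through $(x,t)$ uses the position at time $\tau$, namely $y=(x_1e^{K(\tau)-K(t)},\,x_2e^{K(t)-K(\tau)})$. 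Substituting this $y$, the argument of $\de_{x_1}\tilde\rho_{\rm in}$ no longer depends on $\tau$. Since $\tilde\omega(\cdot,0)=0$, this gives the separated formula
\[
\tilde\omega(x,t)=-(\de_{x_1}\tilde\rho_{\rm in})\bigl(x_1e^{-K(t)},x_2e^{K(t)}\bigr)\int_0^t e^{-K(\tau)}\,d\tau .
\]
Symmetry of $(\tilde\rho,\tilde\omega)$ then follows directly. The map $(x_1,x_2)\mapsto(x_1e^{-K},x_2e^{K})$ commutes with both reflections. Moreover, $\de_{x_1}\tilde\rho_{\rm in}$ is odd in $x_1$ and odd in $x_2$, which is exactly the symmetry required of $\omega$.

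\textbf{Step 2: the deformation integral.} Plug this into \eqref{eq:der-u1} and change variables by $z_1=y_1e^{-K(t)}$, $z_2=y_2e^{K(t)}$. This map has unit Jacobian, satisfies $y_1y_2=z_1z_2$, and gives $|y|^2=e^{2K}z_1^2+e^{-2K}z_2^2$. We obtain
\[
-\de_{x_1}u_1[\tilde\omega](0,t)=\frac1\pi\int_0^te^{-K(\tau)}d\tau\int_{\R^2}\frac{z_1z_2\,\de_{z_1}\tilde\rho_{\rm in}(z)}{\bigl(e^{2K(t)}z_1^2+e^{-2K(t)}z_2^2\bigr)^2}\,dz .
\]
The numerator is nonnegative by the hypothesis $x_1x_2\de_{x_1}\tilde\rho_{\rm in}\ge0$. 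The support condition \eqref{hp:supp} also keeps the integrand away from the origin, so the integral converges.

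\textbf{Step 3: comparing denominators (the main point).} On $\supp\tilde\rho_{\rm in}$ we have $|z_2|\ge2|z_1|$, so $\tfrac45|z|^2\le z_2^2\le|z|^2$. Because $K\le0$ we also have $e^{2K}\le 1\le e^{-2K}$. Therefore
\[
e^{-2K}z_2^2\;\le\; e^{2K}z_1^2+e^{-2K}z_2^2\;\le\; e^{-2K}z_2^2\Bigl(1+\tfrac{e^{4K}}4\Bigr)\;\le\;\tfrac54e^{-2K}z_2^2 .
\]
Combined with $z_2^2\sim|z|^2$, the denominator is comparable to $e^{-4K(t)}|z|^4$ with absolute constants. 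Since the integrand has a fixed sign, the $z$-integral is therefore bounded above and below by constants times
\[
e^{4K(t)}\,\frac1\pi\int\frac{z_1z_2\,\de_{z_1}\tilde\rho_{\rm in}}{|z|^4}\,dz=e^{4K(t)}\,\de_{x_1}u_1[\de_{x_1}\tilde\rho_{\rm in}](0),
\]
which proves \eqref{ineq:lemma1.3-lower} and \eqref{ineq:lemma1.3-upper}. This is the only step where the hypotheses on $k$ and on the support interact. Without the cone condition, the factor $e^{2K}z_1^2$ could dominate and destroy the uniform comparison, so this is the step I expect to carry the real content.

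\textbf{Step 4: the final linear-in-$t$ bounds.} Since $-M\le K\le0$, we have $e^{-4M}\le e^{4K(t)}\le1$ and $t\le\int_0^te^{-K}\le e^{M}t$. This gives the two-sided bound with $C=\tfrac{5}{4}\cdot\tfrac{25}{16}C_2e^{M}$-type constants. More precisely, $C\sim C_2e^{M}$ and $C_M\sim C_1e^{-4M}$, so $C$ depends on $M$ as well, while $C_M$ depends only on $M$.
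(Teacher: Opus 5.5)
Your proposal is correct and follows the paper's proof essentially step for step. You solve along the explicit hyperbolic characteristics to get the separated formula for $\tilde\omega$, and change variables by the flow in \eqref{eq:der-u1}. You then compare the anisotropic denominator pointwise with $e^{-4K(t)}|z|^4$, using the cone condition on one side and $K\le 0$ on the other; this passes to the integral because $z_1z_2\,\partial_{z_1}\tilde\rho_{\rm in}\ge 0$.

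Two minor remarks. First, the cone $\{|x_2|\ge 2|x_1|\}$ contains the origin, so it does not keep the integrand away from $0$: finiteness there comes from $\partial_{x_1}\tilde\rho_{\rm in}$ vanishing on both axes, and in any case the inequalities hold in $[0,\infty]$ since the integrand has a sign. Second, since $K$ is nonincreasing, $e^{4K(t)}\int_0^t e^{-K(\tau)}\,d\tau\le t\,e^{3K(t)}\le t$, so the upper constant $C$ can be taken independent of $M$. This matches the notation $C$ versus $C_M$ and the later use of $C$ as a universal constant in Lemma~\ref{inductiondeform}.
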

\begin{proof}
First, from \eqref{eq:der-u1} and \eqref{eq:time_der_u1}, using the symmetry and $x_1 x_2 \partial_{x_1} \tilde \rho_{\rm in}(x)>0$, we have
\[
    \partial_{x_1}u_1[\tilde \omega](x=0,t=0)=0, 
    \qquad 
    \partial_t \de_{x_1}u_1[\tilde \omega](x=0,t=0)<0.
\]

Let $\Phi(x,t)$ be the flow map associated with \eqref{eq:lemmastabledef}. Explicitly,
\be
\Phi(x_1,x_2,t)
=
\begin{pmatrix}
x_1\mathrm{e}^{\int_0^t k(\tau)\,d\tau}\\
x_2\mathrm{e}^{-\int_0^t k(\tau)\,d\tau}
\end{pmatrix}.
\ee
Along the flow,
\[
\frac{d}{dt}(\tilde \rho\circ\Phi)=0,
\qquad
\frac{d}{dt}(\tilde \omega\circ\Phi)
=-(\de_{x_1}\tilde \rho)\circ\Phi.
\]
Hence
\begin{align}
    \tilde \rho(x,t)&=\tilde \rho_{\rm in}\circ\Phi^{-1}(x,t), \label{eq:tilde-rho}\\
    \tilde \omega(x,t)&=
    -\de_{x_1}(\tilde \rho_{\rm in})\circ\Phi^{-1}(x,t)
    \int_0^t \text{e}^{-\int_0^\tau k(s)\,ds}\,d\tau. \label{eq:tilde-omega}
\end{align}
In particular,
\[
(\de_{x_1}\tilde \rho)(x,\tau)\circ\Phi(\tau)
=
\de_{x_1}\tilde \rho_{\rm in}(x)
\text{e}^{-\int_0^\tau k(s)\,ds}.
\]

Using \eqref{eq:der-u1}, \eqref{eq:tilde-omega}, and the change of variables induced by $\Phi$, we obtain
\begin{align}
    \de_{x_1} u_1[\tilde \omega](x=0,t)
    &=
    -\frac{1}{\pi}\int_{\R^2}
    \frac{\phi_1\phi_2}{|\Phi|^4}
    \de_{1}\tilde \rho_{\rm in}(\Phi)\,dx
    \int_0^t \text{e}^{-\int_0^\tau k(s)\,ds}\,d\tau.
    \label{eq:de-u1-flow}
\end{align}
By the symmetry and the assumptions on the initial data,
\[
    \de_{x_1}u_1[\tilde \omega](x=0,t)<0
    \qquad\text{for }t\in(0,T].
\]

Since
\[
\mathrm{e}^{-M}
\leq
\mathrm{e}^{\int_0^t k(s)\,ds}
\leq 1,
\]
and $|y_2|\geq2|y_1|$ on the support of the initial data, we have
\[
y_2^2+\mathrm{e}^{4\int_0^t k(s)\,ds}y_1^2
\leq y_1^2+y_2^2,
\]
while
\[
y_2^2+\mathrm{e}^{4\int_0^t k(s)\,ds}y_1^2
\geq y_2^2
\geq \frac{4}{5}(y_1^2+y_2^2).
\]
Therefore,
\[
\frac{1}{y_1^2+y_2^2}
\leq
\frac{1}{y_2^2+\mathrm{e}^{4\int_0^t k(s)\,ds}y_1^2}
\leq
\frac{5}{4}\frac{1}{y_1^2+y_2^2}.
\]

Consequently,
\begin{align*}
&\frac{\mathrm{e}^{4\int_0^t k(s)\,ds}}{\pi}
\int_{\R^2}
\frac{y_1y_2}{(y_1^2+y_2^2)^2}
\de_{x_1}\tilde \rho_{\rm in}(y_1,y_2)\,dy_1\,dy_2
\int_0^t \mathrm{e}^{-\int_0^\tau k(s)\,ds}\,d\tau
\\
&\qquad\leq
\frac{\mathrm{e}^{4\int_0^t k(s)\,ds}}{\pi}
\int_{\R^2}
\frac{y_1y_2}
{\left(y_2^2+\mathrm{e}^{4\int_0^t k(s)\,ds}y_1^2\right)^2}
\de_{x_1}\tilde \rho_{\rm in}(y_1,y_2)\,dy_1\,dy_2
\int_0^t \mathrm{e}^{-\int_0^\tau k(s)\,ds}\,d\tau
\\
&\qquad\leq
\frac{25}{16\pi}\mathrm{e}^{4\int_0^t k(s)\,ds}
\int_{\R^2}
\frac{y_1y_2}{(y_1^2+y_2^2)^2}
\de_{x_1}\tilde \rho_{\rm in}(y_1,y_2)\,dy_1\,dy_2
\int_0^t \mathrm{e}^{-\int_0^\tau k(s)\,ds}\,d\tau.
\end{align*}
This proves \eqref{ineq:lemma1.3-lower}--\eqref{ineq:lemma1.3-upper}.
Finally,
\[
t
\leq
\int_0^t\mathrm{e}^{-\int_0^\tau k(s)\,ds}\,d\tau
\leq
\mathrm{e}^{M}t,
\]
which gives
\[
Ct\geq-\de_{x_1}u_1[\tilde\omega](x=0,t)\geq C_Mt.
\]
This completes the proof.
\end{proof}

Lemma \ref{stabledeformation} provides the stability mechanism underlying our iterative construction. Its application, however, requires the coefficient $k(t)$ to remain negative. This is ensured precisely in the regime
\[
\partial_{x_1}u_1(0,t)<0,
\]
established in Lemma \ref{lem:stable-exact-sol}.

\begin{lem}\label{defshort}
    Let $u(x,t)=(u_{1}(x,t),u_{2}(x,t))$ be a $C^1$ function for $t\in(0,T]$ satisfying $u(0,t)=0$, and set
    \[
    (\de_{x_{1}}u_{1})(0,t)=k(t)<0,
    \qquad
    \widetilde{M}(t)=-\int_{0}^{t}\widetilde{k}(\tau)\,d\tau.
    \]
    Let $(\rho,\omega)$ solve
    \begin{equation}
    \begin{cases}
        \de_t \rho + (k(t)(x_{1},-x_{2})) \cdot \nabla \rho=0,\\
        \de_t \omega + (k(t)(x_{1},-x_{2})) \cdot \nabla \omega=-\de_{x_1}\rho,\\
        (\rho(x,0),\omega(x,0))=(\rho_{\rm in}(x),0),
    \end{cases}
    \end{equation}
    with
    \[
    \supp\rho_{\rm in}(x)\subset\{|x_{2}|\geq 2|x_{1}|\},
    \qquad
    x_{1}x_{2}\de_{x_{1}}\rho_{\rm in}(x)\geq0,
    \qquad
    \rho_{\rm in}\in C^2.
    \]
    Then, there exists a universal $C$ such that, for every $\epsilon\in(0,T]$,
    \[
    \int_{0}^{\epsilon}-\de_{x_{1}}u_{1}[\omega](0,t)\,dt
    \geq
    \frac{\epsilon^2}{CT^2}
    \mathrm{e}^{-\widetilde{M}(\epsilon)}
    \int_{\epsilon}^{T}-\de_{x_{1}}u_{1}[\omega](0,t)\,dt,
    \]
    and hence (after relabeling the new constant)
    \[
    \int_{0}^{\epsilon}-\de_{x_{1}}u_{1}[\omega](0,t)\,dt
    \geq
    \frac{\epsilon^2}{CT^2}
    \mathrm{e}^{-\widetilde{M}(\epsilon)}
    \int_{0}^{T}-\de_{x_{1}}u_{1}[\omega](0,t)\,dt.
    \]
\end{lem}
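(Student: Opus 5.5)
Setting $K(t)=\int_0^t k(s)\,ds$ (so $\widetilde M(t)=-K(t)\ge 0$ is nondecreasing, reading $\widetilde k=k$) and
\[
A:=\de_{x_1}u_1[\de_{x_1}\rho_{\rm in}](x=0)>0,
\]
the plan is to compare the two time integrals through the two-sided bounds \eqref{ineq:lemma1.3-lower}--\eqref{ineq:lemma1.3-upper} of Lemma \ref{stabledeformation}. That lemma applies here: the transport field is the same linear strain $k(t)(x_1,-x_2)$, the support and sign hypotheses on $\rho_{\rm in}$ are identical, and $\omega(\cdot,0)=0$. The positivity of $A$ follows from $x_1x_2\de_{x_1}\rho_{\rm in}\ge0$ together with formula \eqref{eq:der-u1}. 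With this notation the two bounds read
\[
C_1 A\,\mathrm{e}^{4K(t)}I(t)\le -\de_{x_1}u_1[\omega](0,t)\le C_2 A\,\mathrm{e}^{4K(t)}I(t),
\qquad I(t)=\int_0^t \mathrm{e}^{-K(\tau)}\,d\tau .
\]
The key point is that $C_1,C_2$ are universal (they came from $1\le 5/4$ type comparisons using $|y_2|\ge2|y_1|$), whereas the $M$-dependence is carried entirely by the factors $\mathrm{e}^{4K}$ and $I$. So I will keep those factors explicit and not pass to the cruder ``in particular'' form.

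For the lower bound on $[0,\epsilon]$, I will use $K(t)\ge K(\epsilon)=-\widetilde M(\epsilon)$ for $t\le\epsilon$, since $K$ is decreasing. I will also use $I(t)\ge t$, because $\mathrm{e}^{-K}\ge1$. These give
\[
\int_0^\epsilon -\de_{x_1}u_1\,dt\ \ge\ C_1A\,\mathrm{e}^{-4\widetilde M(\epsilon)}\frac{\epsilon^2}{2}.
\]
For the upper bound on $[\epsilon,T]$, I will use $\mathrm{e}^{4K(t)}I(t)=\int_0^t \mathrm{e}^{4K(t)-K(\tau)}\,d\tau\le \int_0^t \mathrm{e}^{3K(t)}\,d\tau\le t$. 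This holds because $K(t)\le K(\tau)$ for $\tau\le t$ and $K\le0$. Hence
\[
\int_\epsilon^T -\de_{x_1}u_1\,dt\le C_2 A\,T^2/2.
\]
Taking the ratio gives $\int_0^\epsilon\ge \frac{C_1}{C_2}\frac{\epsilon^2}{T^2}\mathrm{e}^{-4\widetilde M(\epsilon)}\int_\epsilon^T$.

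The main obstacle is the exponent: this argument naturally produces $\mathrm{e}^{-4\widetilde M(\epsilon)}$, not $\mathrm{e}^{-\widetilde M(\epsilon)}$. I would first try to sharpen the lower bound by keeping $I(t)\ge t\,\mathrm{e}^{-K(t)}$ only partially, but the honest outcome is probably $\mathrm{e}^{-c\widetilde M(\epsilon)}$ for some universal $c$, with $c=4$ or $3$. For the application this is harmless, since $\widetilde M(\epsilon)\le M$ stays bounded as $\epsilon\to0$, so I would state it with that exponent after relabeling. The final form then follows by adding $\int_0^\epsilon$ to both sides. Since $\epsilon^2\mathrm{e}^{-c\widetilde M}/(CT^2)\le 1/C<1$, we get
\[
(1+\lambda)\int_0^\epsilon\ \ge\ \lambda\int_0^T, \qquad \lambda=\frac{\epsilon^2}{CT^2}\mathrm{e}^{-c\widetilde M(\epsilon)}\le1,
\]
and dividing by $1+\lambda\le2$ gives the stated inequality after doubling $C$.
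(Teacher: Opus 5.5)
As written, your argument proves only
\[
\int_0^\epsilon -\de_{x_1}u_1[\omega](0,t)\,dt\ \ge\ \frac{C_1}{C_2}\frac{\epsilon^2}{T^2}\,\mathrm{e}^{-4\widetilde M(\epsilon)}\int_\epsilon^T -\de_{x_1}u_1[\omega](0,t)\,dt .
\]
This is weaker than the statement, since $\widetilde M(\epsilon)>0$, and you leave the stated exponent $\mathrm{e}^{-\widetilde M(\epsilon)}$ unproved, even suggesting it may not hold.

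The loss is in your tail estimate, not in the lower bound. After $\mathrm{e}^{4K(t)-K(\tau)}\le \mathrm{e}^{3K(t)}$ you discard the factor via $\mathrm{e}^{3K(t)}\le 1$. For $t\in[\epsilon,T]$, however, $K$ is nonincreasing, so $\mathrm{e}^{3K(t)}\le \mathrm{e}^{3K(\epsilon)}=\mathrm{e}^{-3\widetilde M(\epsilon)}$. This gives
\[
\int_\epsilon^T -\de_{x_1}u_1[\omega](0,t)\,dt\ \le\ \frac{C_2A}{2}\,T^2\,\mathrm{e}^{-3\widetilde M(\epsilon)} .
\]
Combining this with your correct lower bound $\frac{C_1A}{2}\epsilon^2\mathrm{e}^{-4\widetilde M(\epsilon)}$ yields exactly the factor $\frac{C_1}{C_2}\frac{\epsilon^2}{T^2}\mathrm{e}^{-\widetilde M(\epsilon)}$, with a universal constant.

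This is precisely the paper's key step: $\mathrm{e}^{-4\widetilde M(t)}\le \mathrm{e}^{-3\widetilde M(\epsilon)}\mathrm{e}^{-\widetilde M(t)}$ for $t\ge\epsilon$. With it, your proof coincides with the paper's. Your derivation of the second form via $(1+\lambda)\int_0^\epsilon\ge\lambda\int_0^T$ is a correct filling-in of what the paper calls relabeling.

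Two remarks on your discussion of the obstacle. First, the sharpening you propose on the lower-bound side goes the wrong way. Since $-K=\widetilde M$ is nondecreasing, $I(t)\le t\,\mathrm{e}^{-K(t)}$, not $\ge$, so nothing can be gained there.

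Second, your justification that the weaker exponent is harmless because $\widetilde M(\epsilon)\le M$ is off. If used to absorb $\mathrm{e}^{-3\widetilde M(\epsilon)}$ into $C$, it would make $C$ depend on $M$. That matters here: in the iteration $M=M_n(T_\epsilon)\to\infty$, and the constant in \eqref{eq:estknew2} of Lemma \ref{inductiondeform} must be universal. Keeping $\mathrm{e}^{-4\widetilde M(\epsilon)}$ explicit with a universal constant would still serve Step 1 of the main proof, since there $M_{j-1}(t_0)$ is bounded under the contradiction hypothesis. But that is a different statement from the one you were asked to prove.
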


\begin{proof}
    By Lemma \ref{stabledeformation}, there exist constants $C_1,C_2>0$ depending only on $\rho_{\rm in}$ such that
    \begin{align*}
        C_1\mathrm{e}^{-4\widetilde{M}(t)}
        \int_0^t\mathrm{e}^{\widetilde{M}(\tau)}\,d\tau
        &\leq
        -\de_{x_1}u_1[\omega](0,t)\\
        &\leq
        C_2\mathrm{e}^{-4\widetilde{M}(t)}
        \int_0^t\mathrm{e}^{\widetilde{M}(\tau)}\,d\tau.
    \end{align*}
    Since $\widetilde{M}$ is increasing, for $t\in[0,\epsilon]$,
    \[
    \mathrm{e}^{-4\widetilde{M}(t)}
    \geq
    \mathrm{e}^{-4\widetilde{M}(\epsilon)},
    \qquad
    \mathrm{e}^{\widetilde{M}(\tau)}\geq 1.
    \]
    Therefore,
    \begin{align*}
        \int_0^\epsilon-\de_{x_1}u_1[\omega](0,t)\,dt
        &\geq
        C_1\mathrm{e}^{-4\widetilde{M}(\epsilon)}
        \int_0^\epsilon\int_0^t
        \mathrm{e}^{\widetilde{M}(\tau)}\,d\tau\,dt\\
        &\geq
        \frac{C_1}{2}\epsilon^2
        \mathrm{e}^{-4\widetilde{M}(\epsilon)}.
    \end{align*}
    On the other hand, for $t\in[\epsilon,T]$,
    \[
    \mathrm{e}^{-4\widetilde{M}(t)}
    \leq
    \mathrm{e}^{-3\widetilde{M}(\epsilon)}
    \mathrm{e}^{-\widetilde{M}(t)},
    \]
    and hence
    \begin{align*}
        \int_\epsilon^T-\de_{x_1}u_1[\omega](0,t)\,dt
        &\leq
        C_2\mathrm{e}^{-3\widetilde{M}(\epsilon)}
        \int_\epsilon^T
        \int_0^t
        \mathrm{e}^{-\widetilde{M}(t)+\widetilde{M}(\tau)}
        \,d\tau\,dt\\
        &\leq
        C_2T^2\mathrm{e}^{-3\widetilde{M}(\epsilon)},
    \end{align*}
    where we used $\widetilde{M}(\tau)\leq\widetilde{M}(t)$ for $\tau\leq t$.
    Combining the two estimates gives
    \[
    \int_0^\epsilon-\de_{x_1}u_1[\omega](0,t)\,dt
    \geq
    \frac{C_1}{2C_2}
    \frac{\epsilon^2}{T^2}
    \mathrm{e}^{-\widetilde{M}(\epsilon)}
    \int_\epsilon^T-\de_{x_1}u_1[\omega](0,t)\,dt.
    \]
    Absorbing the ratio $C_1/(2C_2)$ into the constant completes the proof.
\end{proof}


\section{Inner and outer approximations}
Having established the stability of the deformation at the origin in Lemma~\ref{stabledeformation}, we construct the solution by successively gluing together different layers. This construction relies on the following gluing lemma.
\begin{lem}\label{gluing1}
    For $t \in [0, T]$, let $(\rho(x,t), \omega(x, t)) \in C([0,T]; C^\infty(\mathbb{R}^2))$ be a symmetric solution, in the sense of Definition \ref{def:symmetric}, to the system \eqref{eq:2Dbouss-vorticity} with stable initial data \eqref{eq:data-stable-pert}. Furthermore, assume that $\mathrm{supp}(\rho(x,t)+{\color{blue}x_2}, \, \omega(x, t)) \cap B_\delta(0) = \emptyset$ for some $\delta > 0$, and let $f(x)$ be a compactly supported function in $H^4(\mathbb{R}^2)$. Assuming that 
    \begin{equation*}
        k(t) = \partial_{x_{1}} u_1[\omega](0,t) \le 0,
    \end{equation*}
    we introduce the following systems:
    \begin{subequations}\label{eq:systems-all}
    \begin{align}
    \label{simp1}
    &\begin{cases}
        \partial_{t}\bar{\rho}_{\lambda, \mathrm{int}} + \mathbf{u}[\bar{\omega}_{\lambda, \mathrm{int}}] \cdot \nabla \bar{\rho}_{\lambda, \mathrm{int}} + k(t)(x_{1},-x_{2}) \cdot \nabla \bar{\rho}_{\lambda, \mathrm{int}} = 0, \\
        \partial_t \bar{\omega}_{\lambda, \mathrm{int}} + \mathbf{u}[\bar{\omega}_{\lambda, \mathrm{int}}] \cdot \nabla \bar{\omega}_{\lambda, \mathrm{int}} + k(t)(x_{1},-x_{2}) \cdot \nabla \bar{\omega}_{\lambda, \mathrm{int}}(x,t) = -\partial_{x_1} \bar{\rho}_{\lambda, \mathrm{int}}(x,t), \\
        \bar{\rho}_{\lambda, \mathrm{int}}(x,0) = \frac{f(\lambda x)}{\lambda}, \quad \bar{\omega}_{\lambda, \mathrm{int}}(x,0) = 0, 
    \end{cases} \\[1ex]
    \label{eq:sum1}
    &\begin{cases}
        \partial_{t}\tilde{\rho}_{\mathrm{ext}}(x,t) + \mathbf{u}[\tilde{\omega}_{\lambda, \mathrm{int}} + \tilde{\omega}_{\mathrm{ext}}] \cdot \nabla \tilde{\rho}_{\mathrm{ext}} = 0, \\
        \partial_{t}\tilde{\omega}_{\mathrm{ext}} + \mathbf{u}[\tilde{\omega}_{\lambda, \mathrm{int}} + \tilde{\omega}_{\mathrm{ext}}] \cdot \nabla \tilde{\omega}_{\mathrm{ext}} = -\partial_{x_1}\tilde{\rho}_{\mathrm{ext}}, \\
        \tilde{\rho}_{\mathrm{ext}}(x,0) = \rho(x,0) = -x_2 + \rho_{\mathrm{in}}(x), \quad \tilde{\omega}_{\mathrm{ext}}(x,0) = 0,
    \end{cases} \\[1ex]
    \label{eq:simp2}
    &\begin{cases} 
        \partial_{t}\tilde{\rho}_{\lambda, \mathrm{int}}(x,t) + \mathbf{u}[\tilde{\omega}_{\lambda, \mathrm{int}} + \tilde{\omega}_{\mathrm{ext}}] \cdot \nabla \tilde{\rho}_{\lambda, \mathrm{int}} = 0, \\
        \partial_{t}\tilde{\omega}_{\lambda, \mathrm{int}}(x,t) + \mathbf{u}[\tilde{\omega}_{\lambda, \mathrm{int}} + \tilde{\omega}_{\mathrm{ext}}] \cdot \nabla \tilde{\omega}_{\lambda, \mathrm{int}} = -\partial_{x_1} \tilde{\rho}_{\lambda, \mathrm{int}}, \\
        \tilde{\rho}_{\lambda, \mathrm{int}}(x,0) = \frac{f(\lambda x)}{\lambda}, \quad \tilde{\omega}_{\lambda, \mathrm{int}}(x,0) = 0.
    \end{cases}
    \end{align}
    \end{subequations}
    
    Suppose that $\|\rho(x,t) + x_2\|_{H^4}$, $\|\omega(x, t)\|_{H^3 \cap \dot{H}^{-1}}$, $\left\| \lambda\bar{\rho}_{\lambda, \mathrm{int}}\left(\frac{x}{\lambda}, t\right)\right\|_{H^4}$, and $\left\|\bar{\omega}_{\lambda, \mathrm{int}}\left(\frac{x}{\lambda}, t\right)\right\|_{H^3 \cap \dot{H}^{-1}}$ are uniformly bounded for $t \in [0,T]$.
    Then, there exists a constant $C > 0$, depending on $\delta$, $T$, $\|\rho(x,t) + x_2\|_{H^4}$, $\|\omega(x, t)\|_{H^3 \cap \dot{H}^{-1}}$, $\left\| \lambda\bar{\rho}_{\lambda, \mathrm{int}}\left(\frac{x}{\lambda}, t\right)\right\|_{H^4}$, and $\left\|\bar{\omega}_{\lambda, \mathrm{int}}\left(\frac{x}{\lambda}, t\right)\right\|_{H^3 \cap \dot{H}^{-1}}$, such that, for $\lambda > 0$ sufficiently large, the following estimates hold:
    \begin{align}
        \|\rho(x, t) - \tilde{\rho}_{\mathrm{ext}}(x,t)\|_{H^3} &\le C t \lambda^{-1}, & \|\omega(x, t) - \tilde{\omega}_{\mathrm{ext}}(x,t)\|_{H^2 \cap \dot{H}^{-1}} &\le C t \lambda^{-1}, \label{est:nonscaled-1} \\
        \partial_{t}\|\rho(x,t) - \tilde{\rho}_{\mathrm{ext}}(x,t)\|_{H^3} &\le C \lambda^{-1}, & \partial_{t}\|\omega(x,t) - \tilde{\omega}_{\mathrm{ext}}(x,t)\|_{H^2 \cap \dot{H}^{-1}} &\le C \lambda^{-1}, \label{est:nonscaled-new} \\
        \left\|\lambda \tilde{\rho}_{\lambda, \mathrm{int}}\left(\frac{x}{\lambda}\right) - \lambda \bar{\rho}_{\lambda, \mathrm{int}}\left(\frac{x}{\lambda}\right)\right\|_{H^3} &\le C t\lambda^{-1}, & \left\| \tilde{\omega}_{\lambda, \mathrm{int}}\left(\frac{x}{\lambda}\right) - \bar{\omega}_{\lambda, \mathrm{int}}\left(\frac{x}{\lambda}\right)\right\|_{H^2 \cap \dot{H}^{-1}} &\le C t\lambda^{-1}, \label{est:scaled-omega} \\
        \partial_{t}\left\|\lambda \tilde{\rho}_{\lambda, \mathrm{int}}\left(\frac{x}{\lambda}\right) - \lambda \bar{\rho}_{\lambda, \mathrm{int}}\left(\frac{x}{\lambda}\right)\right\|_{H^3} &\le C\lambda^{-1}, & \partial_{t}\left\| \tilde{\omega}_{\lambda, \mathrm{int}}\left(\frac{x}{\lambda}\right) - \bar{\omega}_{\lambda, \mathrm{int}}\left(\frac{x}{\lambda}\right)\right\|_{H^2 \cap \dot{H}^{-1}} &\le C\lambda^{-1}. \label{est:scaled-new}
    \end{align}
    
    Finally, $\left\| \lambda\bar{\rho}_{\lambda, \mathrm{int}}\left(\frac{x}{\lambda}, t\right)\right\|_{H^4}$, $\left\|\bar{\omega}_{\lambda, \mathrm{int}}\left(\frac{x}{\lambda}, t\right)\right\|_{H^3 \cap \dot{H}^{-1}}$, $\|\tilde{\rho}_{\mathrm{ext}}(x, t) + x_2\|_{H^4}$, and $\|\tilde{\omega}_{\mathrm{ext}}(x, t)\|_{H^3 \cap \dot{H}^{-1}}$ remain uniformly bounded with respect to $\lambda$ for all $t \in [0,T]$.
\end{lem}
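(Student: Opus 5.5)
We prove the four pairs of estimates by energy estimates on the differences, exploiting the scale separation. The exterior solution $(\rho,\omega)$ lives at unit scale and vanishes (as a perturbation) in $B_\delta(0)$. The interior solution $\bar\rho_{\lambda,\mathrm{int}},\bar\omega_{\lambda,\mathrm{int}}$ is essentially concentrated at scale $\lambda^{-1}$ near the origin. Set
\[
\rho^{e}_{\rm d}=\tilde\rho_{\mathrm{ext}}-\rho,\quad \omega^{e}_{\rm d}=\tilde\omega_{\mathrm{ext}}-\omega,\quad \rho^{i}_{\rm d}=\tilde\rho_{\lambda,\mathrm{int}}-\bar\rho_{\lambda,\mathrm{int}},\quad \omega^{i}_{\rm d}=\tilde\omega_{\lambda,\mathrm{int}}-\bar\omega_{\lambda,\mathrm{int}}.
\]
Since $\tilde\rho_{\mathrm{ext}}$ and $\rho$ both contain $-x_2$, $\rho^e_{\rm d}$ vanishes at $t=0$ and is a genuine $H^3$ function. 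The same holds for the interior differences. We work in the rescaled variable $y=\lambda x$ for the interior pair, where $\lambda\bar\rho(y/\lambda)$ and $\bar\omega(y/\lambda)$ are uniformly bounded by hypothesis. Subtracting the equations gives:
\begin{align*}
\partial_t\rho^e_{\rm d}+\mathbf u[\tilde\omega_{\mathrm{ext}}+\tilde\omega_{\lambda,\mathrm{int}}]\cdot\nabla\rho^e_{\rm d}&=-\mathbf u[\omega^e_{\rm d}]\cdot\nabla\rho-\mathbf u[\tilde\omega_{\lambda,\mathrm{int}}]\cdot\nabla\rho,\\
\partial_t\omega^e_{\rm d}+\mathbf u[\tilde\omega_{\mathrm{ext}}+\tilde\omega_{\lambda,\mathrm{int}}]\cdot\nabla\omega^e_{\rm d}&=-\partial_{x_1}\rho^e_{\rm d}-\mathbf u[\omega^e_{\rm d}]\cdot\nabla\omega-\mathbf u[\tilde\omega_{\lambda,\mathrm{int}}]\cdot\nabla\omega.
\end{align*}
The interior differences satisfy the analogous system. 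There the forcing is $(\mathbf u[\tilde\omega_{\mathrm{ext}}]-k(t)(x_1,-x_2))\cdot\nabla\bar\rho_{\lambda,\mathrm{int}}$ and its $\omega$ counterpart, plus terms linear in the differences.

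\textbf{Step 1 (size of the cross forcing terms).} The source terms are small only on a time interval where the two components remain separated, so we run a bootstrap. The assumption we bootstrap is that the $H^3$ and $H^2$ norms of the differences are $\le 1$.
\begin{enumerate}
\item \emph{Interior forcing on the exterior.} The velocity $\mathbf u[\tilde\omega_{\lambda,\mathrm{int}}]$ acts on $\nabla\rho$ and $\nabla\omega$, which are the gradients of the background $-x_2$ plus terms supported outside $B_\delta$. Apply the Biot--Savart law: $\tilde\omega_{\lambda,\mathrm{int}}$ has scale $\lambda^{-1}$, is odd--odd, and lies in $\dot H^{-1}$. Hence on $|x|\ge\delta$ the kernel expansion gives $|\partial^\alpha\mathbf u[\tilde\omega_{\lambda,\mathrm{int}}]|\lesssim_\delta \lambda^{-1}$, using its $L^1$ mass $\sim\lambda^{-2}$ times the moment structure. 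On $B_\delta$, $\nabla\rho+ (0,1)$ and $\nabla\omega$ vanish.
\item \emph{The stable background term.} The term $-u_2[\tilde\omega_{\lambda,\mathrm{int}}]$ coming from $-x_2$ does not vanish near the origin. It must instead be accounted for: it is exactly the term already present in the interior equation. In the way the systems are split, the $-x_2$ part belongs to $\tilde\rho_{\mathrm{ext}}$. I would treat the $O(1)$-in-$H^3$ but $\lambda^{-1}$-in-scaling contribution of $u_2[\tilde\omega_{\lambda,\mathrm{int}}]$ by noting that $\|u_2[\tilde\omega_{\lambda,\mathrm{int}}]\|_{H^3}$ picks up powers of $\lambda$. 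This is the delicate point; see below.
\item \emph{Exterior forcing on the interior.} After rescaling, $\mathbf u[\tilde\omega_{\mathrm{ext}}]-k(t)(x_1,-x_2)$ is $O(|x|^2)$ near $0$ by symmetry, since $\mathbf u$ is odd with linear part $k(t)(x_1,-x_2)$. On the support scale $|x|\lesssim\lambda^{-1}$ this yields $O(\lambda^{-2})$ in velocity, which is $O(\lambda^{-1})$ relative to the rescaled quantities. The tails of $\bar\rho_{\lambda,\mathrm{int}}$ are handled by the uniform $H^4$ bounds.
\end{enumerate}

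\textbf{Step 2 (energy estimates).} Run standard $H^3\times (H^2\cap\dot H^{-1})$ energy estimates with commutator (Kato--Ponce) bounds. The transport velocity is Lipschitz by the uniform $H^3$ bounds. Estimate $\|\mathbf u[\omega_{\rm d}]\|_{H^3}\lesssim\|\omega_{\rm d}\|_{H^2\cap\dot H^{-1}}$, which is why $\dot H^{-1}$ is carried, and note that the $\dot H^{-1}$ part obeys $\partial_t\|\omega_{\rm d}\|_{\dot H^{-1}}\lesssim\|\rho_{\rm d}\|_{L^2}+\dots$. One derivative of $\bar\rho$ is lost in the forcing, which is why the hypotheses are $H^4$ and $H^3$. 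This gives
\[
\frac{d}{dt}E\le C E+C\lambda^{-1},\qquad E(0)=0.
\]
Grönwall then yields $E\le Ct\lambda^{-1}$ on $[0,T]$, which closes the bootstrap for large $\lambda$. The derivative bounds \eqref{est:nonscaled-new} and \eqref{est:scaled-new} are read off the same differential inequality. The final uniform boundedness claims follow from the triangle inequality together with these bounds.

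The main obstacle is the coupling through the background $-x_2$ (and $u_2$) in the exterior system. There the interior velocity is not small near the origin, and I expect to need to absorb it into the interior system's scaling. Concretely, I would check that the difference of the two equations produces only $\mathbf u[\tilde\omega_{\lambda,\mathrm{int}}]$ hitting the genuine perturbation $\rho+x_2$. That quantity vanishes on $B_\delta$, and this vanishing is where $\delta$ enters the constant.
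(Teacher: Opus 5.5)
Your Step~2 is the same energy/Gr\"onwall scheme the paper uses. However, the proposal leaves unresolved exactly the point on which the lemma hinges, and the fix you sketch does not work.

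\textbf{The gap.} You read the hypothesis as $\rho+x_2=0$ on $B_\delta(0)$, so $\nabla\rho=(0,-1)$ there. Then the forcing $-\mathbf u[\tilde\omega_{\lambda,\mathrm{int}}]\cdot\nabla\rho$ in your exterior difference equation contains $u_2[\tilde\omega_{\lambda,\mathrm{int}}]$ on $B_\delta(0)$, and nothing cancels it.
\begin{itemize}
\item The interior system \eqref{eq:simp2} has no $u_2$ source: the stratification lives entirely in $\tilde\rho_{\mathrm{ext}}$. So your item~2 is not right.
\item The difference of the two $\rho$-equations is $-\mathbf u[\omega^e_{\rm d}]\cdot\nabla\rho-\mathbf u[\tilde\omega_{\lambda,\mathrm{int}}]\cdot\nabla\rho$ with the full $\nabla\rho$, not $\nabla(\rho+x_2)$. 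Your proposed fix, that only $\rho+x_2$ is hit, is false.
\item The term is not small. Write $\tilde\omega_{\lambda,\mathrm{int}}(x,t)=v(\lambda x,t)$ with $v(t)$ of unit size for fixed $t>0$. Then $u_2[\tilde\omega_{\lambda,\mathrm{int}}](x)=\lambda^{-1}u_2[v](\lambda x)$ has $\dot H^2$ norm of order $1$ and $\dot H^3$ norm of order $\lambda$.
\end{itemize}
Under that reading, $\tilde\rho_{\mathrm{ext}}-\rho$ would grow with $\lambda$ in $H^3$, and no energy estimate can give $Ct\lambda^{-1}$.

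\textbf{The missing idea} is the ``hole'' built in Lemma~\ref{initialdata}. There $\rho_{\rm in}=x_2$ on $B_{\delta_0}(0)$, so the \emph{total} density $\rho(\cdot,0)=-x_2+\rho_{\rm in}$ vanishes on that ball.
\begin{itemize}
\item $\rho$ and $\tilde\rho_{\mathrm{ext}}$ are transported by Lipschitz fields that vanish at the origin by the odd--odd symmetry.
\item $\omega$ and $\tilde\omega_{\mathrm{ext}}$ are forced only by $-\partial_{x_1}$ of these densities.
\item Hence all four vanish on $B_{\delta_0e^{-Ct}}(0)$, while the interior pair stays in $B_{\mu_0e^{Ct}/\lambda}(0)$.
\end{itemize}
This support separation is the bootstrap the paper runs. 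The support hypothesis has to be read as $\supp\rho\cap B_\delta(0)=\emptyset$ for the total density; this is what the paper's proof uses when it bounds $\mathbf u[\tilde\omega_{\lambda,\mathrm{int}}]$ only on $\R^2\setminus B_\delta(0)$.

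With this reading, $\nabla\rho\equiv0$ on $B_\delta(0)$, so $\mathbf u[\tilde\omega_{\lambda,\mathrm{int}}]\cdot\nabla\rho$ only sees $\mathbf u[\tilde\omega_{\lambda,\mathrm{int}}]$ on $\R^2\setminus B_\delta(0)$. There $|D^jK(x-y)|\lesssim|x-y|^{-j-1}$ with $|x-y|\ge\delta/2$. For $j=0$ one also needs the extra decay coming from the symmetry, because $\nabla\rho\to(0,-1)$ at infinity. Together these give $\|D^j\mathbf u[\tilde\omega_{\lambda,\mathrm{int}}]\|_{L^2(\R^2\setminus B_\delta(0))}\lesssim C(\delta)\|\tilde\omega_{\lambda,\mathrm{int}}\|_{L^2}\lesssim C(\delta)\lambda^{-1}$.

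Your item~2 then disappears. Moreover, $\tilde\rho_{\mathrm{ext}}+x_2$ equals $x_2$ exactly near the origin, which is what yields its uniform $H^4$ bound. Your items~1 and~3, together with Step~2, then close as in the paper.

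One small correction in item~3: the linear part of $\mathbf u[\tilde\omega_{\mathrm{ext}}]$ at $0$ is $\partial_{x_1}u_1[\tilde\omega_{\mathrm{ext}}](0,t)$, not $k(t)$. The mismatch is bounded by $\|\tilde\omega_{\mathrm{ext}}-\omega\|$, and this couples the interior and exterior difference estimates.
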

\begin{Rmk}
    Observe that
\[
(\tilde{\rho}_{\mathrm{ext}}+\tilde{\rho}_{\lambda,\mathrm{int}}, \quad 
\tilde{\omega}_{\mathrm{ext}}+\tilde{\omega}_{\lambda,\mathrm{int}})
\]
is the exact solution of \eqref{eq:2Dbouss-vorticity} issued from the initial data
\[
(\rho(\cdot,0)+\tilde{\rho}_{\lambda,\mathrm{int}}(\cdot,0),0).
\]
In contrast,
\[
(\rho+\bar{\rho}_{\lambda,\mathrm{int}}, \quad 
\omega+\bar{\omega}_{\lambda,\mathrm{int}})
\]
is only a formal approximation obtained by superposing the two components. We will prove that, provided the parameters satisfy suitable conditions, this naive approximation is accurate up to a controlled error, which is small in appropriate norms.
\end{Rmk}
\begin{Rmk}
To ensure the existence of a solution $\rho(x,t)$ satisfying the properties required in Lemma \ref{gluing1}, we first apply Lemma \ref{initialdata}. This provides an initial perturbation $\rho_{\rm in}(x)$ such that, for some $\delta_0>0$,
\begin{align*}
    \operatorname{supp}\rho(\cdot,0)
    =\operatorname{supp}\rho_{\rm in}(\cdot)
    \subset \mathbb{R}^2\setminus B_{\delta_0}(0).
\end{align*}
Notice that the same initial perturbation $\rho_{\rm in}(x)$ is also used in the definition of the exterior solution $\tilde{\rho}_{\mathrm{ext}}(x,t)$.

Next, Lemma \ref{lem:stable-exact-sol} provides an exact solution
$(\rho(x,t),\omega(x,t))$ to \eqref{eq:2Dbouss-vorticity} with initial data
\[
(\rho_{\rm in}(x),\omega_{\rm in}(x)),
\]
such that, for some $\delta\leq\delta_0$,
\begin{align*}
    \operatorname{supp}\rho(\cdot,t)\cup
    \operatorname{supp}\omega(\cdot,t)
    \subset \mathbb{R}^2\setminus B_{\delta}(0)
\end{align*}
for all times under consideration.
\end{Rmk}
\begin{Rmk}[On the estimate $\|\omega\|_{\dot H^{-1}}$]
Recalling from the Biot--Savart law that

$$
\|\omega\|_{\dot H^{-1}} \simeq \|\mathbf u[\omega]\|_{L^2},
$$
we note that system \eqref{simp1} has the expected scaling. Indeed, along the flow $\bar\Phi^\lambda$ associated with $\mathbf u[\bar\omega_{\lambda,\mathrm{int}}]+\bar k(t)(x_1,-x_2)$, the transport equation gives

$$
\bar\rho_{\lambda,\mathrm{int}}\circ\bar\Phi^\lambda(t)
=
\frac{f(\lambda x)}{\lambda},
$$
and hence

$$
\partial_t\bigl(\bar\omega_{\lambda,\mathrm{int}}\circ\bar\Phi^\lambda\bigr)
=
-\partial_{x_1} f(\lambda x).
$$

Thus, after the rescaling $y=\lambda x$, the forcing is independent of $\lambda$. Since the initial vorticity vanishes, the resulting vorticity is concentrated at scale $\lambda^{-1}$. Therefore,

$$
\left\|
\bar\omega_{\lambda,\mathrm{int}}\left(\frac x \lambda, t\right)
\right\|_{\dot H^{-1}} \simeq \left\|
\bar\rho_{\lambda,\mathrm{int}}\left(\frac x \lambda, t\right)
\right\|_{L^2} \simeq \frac{C}{\lambda},
$$
with $C$ independent of $\lambda$. The same scaling argument applies to $\tilde\omega_{\lambda,\mathrm{int}}\left(\frac x \lambda, t\right)$.
\end{Rmk}
\begin{proof}[Proof of Lemma \ref{gluing1}]
By continuity in time, which is guaranteed by the well-posedness of the two-dimensional Boussinesq equations \cite{chae1997, elgindi_bouss2015}, since $(\rho (x, t), \omega(x, t))$ and $(\tilde \rho_{{\rm ext}} (x, t), \tilde \omega_{{\rm ext}} (x, t))$ depart from the same initial data as well as $(\tilde{\rho}_{\lambda, {\rm int}}, \tilde{\omega}_{\lambda, {\rm int}})$ and $(\bar{\rho}_{\lambda, {\rm int}}, \bar{\omega}_{\lambda, {\rm int}})$, for any $\epsilon>0$ there exists $T_\epsilon>0$ such that for $t \in [0, T_\epsilon],$
\begin{align*}
    \|\rho(x,t)-\tilde{\rho}_{{\rm ext}}(x,t)\|_{H^3}&\le \epsilon,\quad \|\omega(x,t)-\tilde{\omega}_{{\rm ext}}(x,t)\|_{H^2\cap \dot H^{-1}}\le \epsilon, \\
    \left\|\lambda \tilde{\rho}_{\lambda, {\rm int}}\left(\frac{x}{\lambda}, t\right)-\lambda \bar{\rho}_{\lambda, {\rm int}}\left(\frac{x}{\lambda}, t\right)\right\|_{H^3}&\leq \epsilon, \quad \left\| \tilde{\omega}_{\lambda, {\rm int}}\left(\frac{x}{\lambda}, t\right)- \bar{\omega}_{\lambda, {\rm int}}\left(\frac{x}{\lambda}, t\right)\right\|_{H^2\cap \dot H^{-1}}\leq \epsilon.
\end{align*}
By using the hypotheses, this gives
\begin{align}\label{ineq:shortime}
\|\tilde{\rho}_{{\rm ext}}(x,t)+x_2\|_{H^3}&\le C+\epsilon, \; \|\tilde{\omega}_{{\rm ext}}(x,t)\|_{H^2\cap \dot H^{-1}}\le C+\epsilon;\\ \left\|\lambda \tilde{\rho}_{\lambda, {\rm int}}\left(\frac{x}{\lambda}, t\right)\right\|_{H^3}&\leq C +\epsilon, \; \left\| \tilde{\omega}_{\lambda, {\rm int}}\left(\frac{x}{\lambda}, t\right)\right\|_{H^2\cap \dot H^{-1}}\leq C +\epsilon,\label{ineq:shortime2}
\end{align}
for some constant $C>0$. Note that, under this assumptions, we will obtain the improved bounds present in the statement of the theorem and therefore, by a continuity argument, we will show that the bounds actually hold for the whole time interval $t\in[ 0,T]$.
We will first show that, during the time when this holds, we can control the support of $(\tilde{\rho}_{\lambda, {\rm int}}, \tilde{\omega}_{\lambda, {\rm int}}), (\tilde{\rho}_{{\rm ext}}, \tilde{\omega}_{{\rm ext}})$.
\subsubsection{Control of the support of $(\tilde{\rho}_{\lambda, {\rm int}}, \tilde{\omega}_{\lambda, {\rm int}}), (\tilde{\rho}_{{\rm ext}}, \tilde{\omega}_{{\rm ext}})$}\label{sec:support}
Let us introduce the support size
\begin{align}\label{eq:supp-scaled-initial}
    \mathcal{S}_\lambda^{inf} (t):=\inf \{ \mu \in \mathbb{R}_+\, : \, (\tilde{\rho}_{\lambda, {\rm int}}, \tilde{\omega}_{\lambda, {\rm int}}) (t, x_1, x_2)=(0,0) \; \text{for all} \; |x_1|+|x_2| \ge \mu\}. 
\end{align}
Denoting by $\mu_0 \in \R_+$ the size of the support of $f(x)$, at the initial time we have
\begin{align}
    \mathcal{S}_\lambda^{inf}(0)=\frac{\mu_0}{\lambda}.
\end{align}
Let $\tilde \Phi^\lambda (x, t)=(\tilde \phi_1^\lambda (x, t), \tilde{\phi}_2^\lambda (x, t))$ be the flow map associated with the equations for $(\tilde{\rho}_{\lambda, {\rm int}}, \tilde{\omega}_{\lambda, {\rm int}})$. We have
\begin{align}
    {\tilde \Phi}^\lambda (x, t)=  x + \int_0^t \mathbf{u}[\tilde \omega_{\lambda, {\rm int}} + \tilde \omega_{{\rm ext}}](\tilde\Phi^\lambda (x, s), s) \, d s.
\end{align}
Then, by the Cauchy-Lipschitz Theorem, and since the velocity is zero at the origin,
\begin{align}\label{eq:support-tildelambda}
  \frac{\mu_0}{\lambda} \text{e}^{-t\| \mathbf{u}[\tilde \omega_{\lambda, {\rm int}} + \tilde \omega_{{\rm ext}}]\|_{C^1}} \le \mathcal{S}_\lambda^{inf}(t) \le  \frac{\mu_0}{\lambda} \text{e}^{t\| \mathbf{u}[\tilde{\omega}_{\lambda, {\rm int}} + \tilde \omega_{{\rm ext}}]\|_{C^1}}.
\end{align}
Now, we can bound
\begin{align*}
    \|\mathbf{u}[\tilde \omega_{\lambda, {\rm int}} + \tilde \omega_{{\rm ext}}](x, \cdot) \|_{C^1} & \le  \left\|\mathbf{u}[\tilde \omega_{\lambda, {\rm int}}]\left(\frac x \lambda, \cdot\right) \right\|_{C^1}+ \|\mathbf{u}[\tilde \omega_{{\rm ext}}](x, \cdot)\|_{C^1} \\
    & \lesssim \left\| \tilde{\omega}_{\lambda, {\rm int}}\left(\frac{x}{\lambda}, t\right)\right\|_{H^2\cap \dot H^{-1}} + \|\tilde \omega_{{\rm ext}}(x, t)\|_{H^2\cap \dot H^{-1}} \\
    & \lesssim C+\eps, 
\end{align*}
where we used the invariance of the $C^1$ norm under the scaling $\lambda g\left(\frac{x}{\lambda}\right)$, the smoothing property of the Biot-Savart law, Sobolev embedding and \eqref{ineq:shortime2}. 
From \eqref{eq:supp-scaled-initial}, this implies that
\begin{align}\label{eq:support-tildelambda}
     \frac{\mu_0}{\lambda} \text{e}^{-t(C+\eps)} \le \mathcal{S}_\lambda^{inf}(t) \le  \frac{\mu_0}{\lambda}\text{e}^{t(C+\eps)},
\end{align}
where $\mu_0$ is fixed, while $\lambda$ can be chosen big enough.
Similarly, 
introducing
\begin{align}\label{eq:supp-sup}
    \mathcal{S}^{sup}(t):= \sup \{ \mu \in \R_+ \, : \, (\tilde{\rho}_{{\rm ext}}+x_2, \tilde{\omega}_{{\rm ext}})(t, x_1, x_2)=0 \; \text{for all} \; |x_1|+|x_2|\le \mu\},
\end{align}
at the initial time, by Lemma \ref{initialdata}, we have
\begin{align*}
    \mathcal{S}^{sup}(0) = \delta_0.
\end{align*}
Again, by the Cauchy-Lipschitz Theorem, this is bounded at later times by 
\begin{align*}
     \mathcal{S}^{sup}(t) \ge \delta_0 \text{e}^{- (C+\eps) t}.
\end{align*}
We would like to ensure separation of the supports of $(\tilde{\rho}_{{\rm ext}}, \tilde{\omega}_{{\rm ext}})$ and $(\tilde{\rho}_{\lambda, {\rm int}}, \tilde{\omega}_{\lambda, {\rm int}}) (x, t)$ for all times $t \in [0,T]$. This amount to choosing $\lambda$ big enough to satisfy
\begin{align}
    \delta_0 \text{e}^{- (C+\eps) t} > \frac{\mu_0}{\lambda} \text{e}^{ (C+\eps) t}, \quad t \in [0, T],
\end{align}
which is fulfilled as long as $\lambda$ is big enough.
\subsubsection{Estimating the errors}
\subsubsection*{Estimating $\|\tilde \rho_{{\rm ext}} - \rho\|_{H^3}, \|\tilde \omega_{{\rm ext}} - \omega\|_{H^2\cap \dot H^{-1}}$.}

In the course of the proof, we drop the subscripts ``int, ext''. 
Consider now the equations for the difference
\begin{align}
    \de_t (\tilde \rho - \rho)&+\mathbf{u}[\tilde \omega_\lambda + \tilde \omega] \cdot \nabla (\tilde \rho - \rho) + \mathbf{u}[\tilde \omega_\lambda + \tilde \omega-\omega] \cdot \nabla \rho = 0, \notag \\
    \de_t (\tilde \omega  - \omega)&+\mathbf{u}[\tilde \omega_\lambda + \tilde \omega] \cdot \nabla (\tilde \omega - \omega) + \mathbf{u}[\tilde \omega_\lambda + \tilde \omega-\omega] \cdot \nabla \omega = -\de_{x_1}(\tilde \rho - \rho).
\end{align}
We want to estimate:
\begin{align*}
     \frac{1}{2}\frac{d}{dt}\sum_{j=-1}^2 (\|D^{j+1}(\tilde \rho - \rho)\|_{L^2}^2&+\|D^j (\tilde \omega - \omega)\|_{L^2}^2) = -\sum_{j=-1}^2(\partial_{x_1}D^{j}(\tilde \rho - \rho), D^j(\tilde \omega - \omega))_{L^2}\quad (:=\mathbf{I}_0)\\
     & - \sum_{j=-1}^2 (D^{j+1} (\mathbf{u}[\tilde \omega_\lambda + \tilde \omega]\cdot \nabla (\tilde \rho - \rho)), D^{j+1}(\tilde \rho - \rho))_{L^2} \quad (:=\mathbf{I}_1)\\
     & - \sum_{j=-1}^2 (D^{j}( \mathbf{u}[\tilde \omega_\lambda + \tilde \omega]\cdot \nabla (\tilde \omega-\omega)), D^{j}(\tilde \omega - \omega))_{L^2} \quad (:=\mathbf{I}_2)\\
     & - \sum_{j=-1}^2 (D^{j+1} (\mathbf{u}[\tilde \omega_\lambda + \tilde \omega-\omega] \cdot \nabla \rho), D^{j+1}(\tilde \rho - \rho))_{L^2}\quad (:=\mathbf{I}_3)\\
      & - \sum_{j=-1}^2 (D^j (\mathbf{u}[\tilde \omega_\lambda + \tilde \omega-\omega] \cdot \nabla \omega), D^{j}(\tilde \omega- \omega))_{L^2} \quad (:=\mathbf{I}_4).
\end{align*}
The first term is easily estimated 
\begin{align*}
    |\mathbf{I}_0| \lesssim \|\tilde \rho - \rho\|_{H^3} \| \tilde \omega - \omega\|_{H^2 \cap \dot H^{-1}}.
\end{align*}
Now consider the term $j=-1$ in  $\mathbf{I}_2$. We have:
\begin{align*}
    (D^{-1}(\mathbf{u}[\tilde \omega_\lambda+\tilde \omega]\cdot \nabla (\tilde \omega - \omega)), D^{-1}(\tilde \omega- \omega))_{L^2}&=((\mathbf{u}[\tilde \omega_\lambda+\tilde \omega]\cdot \nabla (\tilde \omega - \omega)), D^{-2}(\tilde \omega- \omega))_{L^2}\\
    &=-(\mathbf{u}[\tilde \omega_\lambda+\tilde \omega] (\tilde \omega - \omega)), D^{-2}\nabla (\tilde \omega- \omega))_{L^2}\\
    & \lesssim \|\mathbf{u}[\tilde \omega_\lambda+\tilde \omega]\|_{L^\infty} \|\tilde \omega - \omega\|_{L^2} \|\tilde \omega - \omega\|_{\dot H^{-1}}. 
\end{align*}
For the remaining terms of $\mathbf{I}_2$ and for $\mathbf{I}_1$, we estimate
\begin{align*}
    \sum_{j=-1}^2 ([D^{j+1}, \mathbf{u}[\tilde \omega_\lambda + \tilde \omega]]\cdot \nabla (\tilde \rho - \rho), D^{j+1}(\tilde \rho - \rho))_{L^2} + \sum_{j=0}^2 ([D^{j}, \mathbf{u}[\tilde \omega_\lambda + \tilde \omega]]\cdot \nabla (\tilde \omega-\omega), D^{j}(\tilde \omega - \omega))_{L^2},
\end{align*}
where, for $j \ge -1$, 
\begin{align*}
    \|[D^{j+1}, \mathbf{u}[\tilde \omega_\lambda + \tilde \omega]]\cdot \nabla (\tilde \rho - \rho)\|_{L^2} & \lesssim \|\nabla \mathbf{u}[\tilde \omega_\lambda + \tilde \omega]\|_{L^\infty} \|\tilde \rho - \rho\|_{H^3} + \|\nabla (\tilde \rho - \rho)\|_{L^\infty}\|\mathbf{u}[\tilde \omega_\lambda + \tilde \omega]\|_{H^3}\\
    & \lesssim \|\tilde \rho - \rho\|_{H^3}\|\mathbf{u}[\tilde \omega_\lambda + \tilde \omega]\|_{H^3}
\end{align*}
and, similarly, for $j \ge 0$, 
\begin{align*}
    \|[D^{j}, \mathbf{u}[\tilde \omega_\lambda + \tilde \omega]]\cdot \nabla (\tilde \omega-\omega)\|_{L^2} \lesssim \|\tilde \omega - \omega\|_{H^2}\|\mathbf{u}[\tilde \omega_\lambda + \tilde \omega]\|_{H^3}. 
\end{align*}
There remains to estimate $\mathbf{I}_3$ and $\mathbf{I}_4$. We have 
\begin{align*}
    |\mathbf{I}_3|& \lesssim (\|\mathbf{u}[\tilde \omega_\lambda]\|_{H^3} + \|\tilde \omega - \omega\|_{H^2 \cap \dot H^{-1}})(\|\nabla \rho+x_2\|_{L^\infty}+\|\rho+x_2\|_{H^4}+1) \|\tilde \rho - \rho\|_{H^3}. 
\end{align*}
Similarly
\begin{align*}
    |\mathbf{I}_4|& \lesssim (\|\mathbf{u}[\tilde \omega_\lambda]\|_{H^3} + \|\tilde \omega - \omega\|_{H^2 \cap \dot H^{-1}})(\|\nabla \omega\|_{L^\infty}+\|\omega\|_{H^3}) \|\tilde \omega - \omega\|_{H^2\cap \dot H^{-1}}. 
\end{align*}
Now, we must show that 
\[\|\mathbf{u}[\tilde \omega_\lambda]\|_{H^3}\]
is arbitrarily small.

\noindent Recall that, by assumption,
\[
\text{supp}(\omega (x, \cdot)) \cap B_\delta (0)=\emptyset.
\]
Moreover, by the support properties established in Section \ref{sec:support},
\[
\text{supp}(\tilde\omega_\lambda (x, \cdot)) \cap (\R^2\setminus B_{c\lambda^{-1}} (0))=\emptyset
\]
for some constant $c>0$. Combining these two observations, we may simply estimate for $0 \le j \le 3$ (using the symmetry property for $j=0)$:
\begin{align}\label{eq:estu-supp-new}
    \| D^j u_1[\tilde \omega_\lambda]\|_{L^2(\R^2\setminus B_\delta(0))} & = \|D^j K_1 \star \tilde \omega_\lambda\|_{L^2(\R^2\setminus B_\delta(0))}  \lesssim \left\| \int_{B_{c\lambda^{-1}}(0)}\frac{1}{|x-y|^{j+1}} \tilde \omega_\lambda (y, \cdot) \, d y_1 \, d y_2 \right\|_{L^2(\R^2\setminus B_\delta(0))},
\end{align}
where we noticed that the kernel $K_1$ in \eqref{eq:BSkernel} satisfies 
\begin{align}
    |D^j K_1 (x-y)| \lesssim \frac{1}{|x-y|^{j+1}}.
\end{align}
By the elementary inequality
\begin{align}
    |x-y| \ge \|x|-|y\| \ge \delta - c\lambda^{-1} \ge \frac{\delta}{2}
\end{align}
for any given $\delta>0$ and for all $\lambda>0$ large enough, and using Young convolution inequality
\begin{align*}
    \| D^j u_1[\tilde \omega_\lambda]\|_{L^2(\R^2\setminus B_\delta(0))}&  \lesssim C(\delta) \|\tilde \omega_\lambda\|_{L^2}.
\end{align*}
Now, using the explicit formula for $\tilde \omega$ in \eqref{eq:tilde-omega}, namely
\begin{align*}
    \tilde \omega_\lambda (x, t)&=\tilde \omega_{\lambda, \rm in}\circ \Phi^{-1}(t) -\left(\int_0^t (\de_{x_1}\tilde \rho_\lambda) \circ \Phi(\tau) \, d\tau\right) \circ \Phi^{-1}(t),
\end{align*}
where the initial data satisfy $\|\de_{x_1}\tilde \rho_\lambda (x, 0)\|_{L^2} \lesssim \| f(\lambda x)\|_{L^2} \lesssim \lambda^{-1}$, from the well-posedness estimates we have
\begin{align}
    \|\de_{x_1}\tilde \rho_\lambda\|_{L^2} \lesssim \lambda^{-1},
\end{align}
yielding the following estimate
\begin{align*}
    \| D^j u_1[\tilde \omega_\lambda]\|_{L^2(\R^2\setminus B_\delta(0))}&  \lesssim C(\delta) \lambda^{-1}.
\end{align*}
The term $D^ju_2[\tilde \omega_\lambda]$, is estimated exactly in the same way.

Putting everything together, integrating in time, and applying Grönwall's inequality, we conclude that
\begin{align}\label{est:H3unscaled}
    \|\tilde \rho - \rho\|_{H^3} + \|\tilde \omega - \omega\|_{H^2\cap \dot H^{-1}} \lesssim  \text{e}^{C(\delta, \|\rho+x_2\|_{L^\infty_t H^4}, \|\omega\|_{L^\infty_t H^3\cap \dot H^{-1}}) t} \lambda^{-1} t,
\end{align}
where $C(\delta, \|\rho+x_2\|_{L^\infty_t H^4}, \|\omega\|_{L^\infty_t H^3\cap \dot H^{-1}})$ denotes a constant depending only on $\delta$, $\|\rho+x_2\|_{L^\infty_t H^4}$, and $\|\omega\|_{L^\infty_t H^3\cap \dot H^{-1}}$.
\subsubsection*{Estimate of $\left\|\lambda \tilde{\rho}_{\lambda,  {\rm int}}\left(\frac{x}{\lambda}, t\right)-\lambda \bar{\rho}_{\lambda,  {\rm int}}\left(\frac{x}{\lambda}, t\right)\right\|_{H^3}$, $\left\| \tilde{\omega}_{\lambda,  {\rm int}}\left(\frac{x}{\lambda}, t\right)- \bar{\omega}_{\lambda,  {\rm int}}\left(\frac{x}{\lambda}, t\right)\right\|_{H^2\cap \dot H^{-1}}$.}
We omit the subscript ``int'' for simplicity.
Consider the scaled unknowns
\begin{align}\label{eq:tilder}
    \tilde r(x, \cdot):&= \lambda\tilde \rho_\lambda \left(\frac{x}{\lambda}, \cdot \right), \quad \bar r(x, \cdot):= \lambda\bar \rho_\lambda \left(\frac{x}{\lambda}, \cdot \right),\\
    \de \tilde r (x, \cdot):&= \de_{x_1}\tilde\rho_\lambda \left(\frac{x}{\lambda}, \cdot \right), \quad  \de \bar r (x, \cdot):= \de_{x_1}\bar\rho_\lambda \left(\frac{x}{\lambda}, \cdot \right), \\
    \tilde v(x, \cdot):&= \tilde \omega_\lambda \left(\frac{x}{\lambda}, \cdot \right), \quad
    \bar v(x, \cdot):= \bar \omega_\lambda \left(\frac{x}{\lambda}, \cdot \right). 
\end{align}
Notice from the Biot-Savart law \eqref{eq:BSkernel} that, for any function $f(x)$
\begin{align*}
    \mathbf{u}[f(x)]\left(\frac{x}{\lambda}\right) = \frac{1}{2\pi}\int \frac{\left( \frac{x}{\lambda}-y\right)^T}{\left|\frac{x}{\lambda}-y\right|^2}  f(y) \, dy_1 dy_2 = \frac{1}{2\pi \lambda} \int \frac{(x-y)^T}{|x-y|^2} f\left(\frac{y}{\lambda}\right) \, dy_1 dy_2 = \frac{1}{\lambda}\mathbf{u}\left[f\left(\frac x \lambda\right)\right](x)
\end{align*}
Defining
\begin{align}
    \bar{\mathbf u}:=\mathbf{u}[\bar v(x)](x) = \mathbf{u}\left[\omega_\lambda\left(\frac{x}{\lambda}\right)\right](x), 
\end{align}
and, similarly, 
\begin{align}
     \tilde{\mathbf u}:=\mathbf{u}[\tilde v(x)](x),
\end{align}
one obtains the following system:
\begin{align*}
    \de_t \bar r
    +\bar{\mathbf{u}} \cdot \nabla \bar r
    +\de_{x_1}u_1[\omega]|_{x=0}(x_1,-x_2)\cdot\nabla\bar r
    &=0,\\
    \de_t \de\bar r
    +\de_{x_1}\bar{\mathbf{u}}\cdot\nabla\bar r
    +\bar{\mathbf{u}}\cdot\nabla\de\bar r
    +(\de_{x_1}u_1[\omega]|_{x=0})
    [\de\bar r+(x_1,-x_2)\cdot\nabla\de\bar r]
    &=0,\\
    {\de_t \tilde r
    +\tilde{\mathbf{u}}\cdot\nabla\tilde r
    +\lambda\bold u[\tilde{\omega}_{\rm ext}]
    \left(\frac{x}{\lambda}\right)\cdot\nabla\tilde r}
    &=0,\\
    \de_t \de\tilde r
    +\de_{x_1}\tilde{\mathbf{u}}\cdot\nabla\tilde r
    +\tilde{\mathbf{u}}\cdot\nabla\de\tilde r
    +\lambda\de_{x_1}\bold u[\tilde{\omega}_{\rm ext}]
    \left(\frac{x}{\lambda}\right)\cdot\nabla\tilde r
    +\lambda\bold u[\tilde{\omega}_{\rm ext}]
    \left(\frac{x}{\lambda}\right)\cdot\nabla\de\tilde r
    &=0,\\
    \de_t \bar v
    +\bar{\mathbf{u}}\cdot\nabla\bar v
    +\de_{x_1}u_1[\omega]|_{x=0}(x_1,-x_2)\cdot\nabla\bar v
    &=-\de_{x_1}\bar r,\\
    \de_t \tilde v
    +\tilde{\mathbf{u}}\cdot\nabla\tilde v
    +\lambda\bold u[\tilde{\omega}_{\rm ext}]
    \left(\frac{x}{\lambda}\right)\cdot\nabla\tilde v
    &=-\de_{x_1}\tilde r.
\end{align*}

Taking the difference yields
\begin{align}\label{eq:diffscaled}
\de_t (\bar r-\tilde r)
+\bar{\mathbf{u}}\cdot\nabla(\bar r-\tilde r)
+(\bar{\mathbf{u}}-\tilde{\mathbf{u}})\cdot\nabla\tilde r
+\de_{x_1}u_1[\omega]|_{x=\bold 0}(x_1,-x_2)\cdot\nabla\bar r
-\lambda\bold u[\tilde{\omega}_{\rm ext}]
\left(\frac{x}{\lambda}\right)\cdot\nabla\tilde r
&=0,
\notag\\
\de_t(\de\bar r-\de\tilde r)
+\de_{x_1}\bar{\mathbf{u}}\cdot\nabla(\bar r-\tilde r)
+\de_{x_1}(\bar{\mathbf{u}}-\tilde{\mathbf{u}})\cdot\nabla\tilde r
+(\de_{x_1}u_1[\omega]|_{x=0})
[1+(x_1,-x_2)\cdot\nabla]\de\bar r\notag\\
-\lambda\de_{x_1}\bold u[\tilde{\omega}_{\rm ext}]
\left(\frac{x}{\lambda}\right)\cdot\nabla\tilde r
-\lambda\bold u[\tilde{\omega}_{\rm ext}]
\left(\frac{x}{\lambda}\right)\cdot\nabla\de\tilde r
&=0,
\notag\\
\de_t(\bar v-\tilde v)
+\bar{\mathbf{u}}\cdot\nabla(\bar v-\tilde v)
+(\bar{\mathbf{u}}-\tilde{\mathbf{u}})\cdot\nabla\tilde v
+\de_{x_1}u_1[\omega]|_{x=\bold 0}(x_1,-x_2)\cdot\nabla\bar v
-\lambda\bold u[\tilde{\omega}_{\rm ext}]
\left(\frac{x}{\lambda}\right)\cdot\nabla\tilde v
&=0.
\end{align}
Multiplying respectively by
$(\bar r-\tilde r)$,
$(\de\bar r-\de\tilde r)$,
and $(\bar v-\tilde v)$ in $L^2$ and integrating in time, the first three terms of each equation are controlled by the exponential factor in the Grönwall estimate. The last two terms in the density equation can be decomposed as 
\begin{align}
&\de_{x_1}u_1[\omega]_{x=\mathbf{0}}(x_1,-x_2)\cdot\nabla\bar r
-\lambda\bold u[\tilde{\omega}_{\rm ext}]
\left(\frac{x}{\lambda}\right)\cdot\nabla\tilde r
\notag\\
&\qquad=
{\de_{x_1}u_1[\omega]|_{x=\mathbf{0}}(x_1,-x_2)
\cdot\nabla(\bar r-\tilde r)}
\notag\\
&\qquad\quad+
\left(
\de_{x_1}u_1[\omega]_{x=\mathbf{0}}(x_1,-x_2)
-\lambda\bold u[\tilde{\omega}_{\rm ext}]
\left(\frac{x}{\lambda}\right)
\right)\cdot\nabla\tilde r
\notag\\
&\qquad=\mathbf{I}_1+\mathbf{I}_2.
\end{align}
{The first term $(\mathbf{I}_1, \bar r - \tilde r)=0$ vanishes}.
To estimate $\mathbf{I}_2$, we use a first-order Taylor expansion of the velocity field around the origin. By symmetry, the zeroth-order term vanishes, and hence, for $x\in\operatorname{supp}(\bar r)$,
\begin{align*}
&\left|
(\de_{x_1}u_1[\tilde\omega_{\rm ext}](0,t))(x_1,-x_2)^t
-\lambda\bold u\left[\tilde\omega_{\rm ext}\left(\frac{\cdot}{\lambda}\right)\right](x,t)
\right|\\
&\qquad=
\left|
(\de_{x_1}u_1[\tilde\omega_{\rm ext}](0,t))(x_1,-x_2)^t
-\lambda\bold u[\tilde\omega_{\rm ext}]\left(\frac{x}{\lambda},t\right)
\right|\\
&\qquad\lesssim
\lambda^{-1}
\|\mathbf u[\tilde\omega]\|_{C^2(B_{c/\lambda}(0))}|x|^2.
\end{align*}
Consequently, {using the separation of the supports of $\omega, \tilde \omega_{\rm ext}$ and $\tilde r, \bar r$}, we have 
\begin{align*}
|(\mathbf{I}_2,\bar r-\tilde r)_{L^2}|
&\lesssim {\|\de_{x_1}u_1[\omega](0,t)(x_1,-x_2)^t-\de_{x_1}u_1[\tilde\omega_{\rm ext}](0,t)(x_1,-x_2)^t\|_{L^2(B_c(0))}\|\nabla \tilde r\|_{L^\infty}\|\bar r-\tilde r\|_{L^2}}
\\
&\quad +\lambda^{-1}
\|\mathbf u[\tilde\omega_{\rm ext}]\|_{C^2(B_{c/\lambda}(0))}
\|\bar r-\tilde r\|_{L^2}
{\||x|^2\nabla\tilde r\|_{L^2(B_c(0))}}\\
&\lesssim ({\|\omega-\tilde \omega_{\rm ext}\|_{L^2}}+
\lambda^{-1})\|\bar r-\tilde r\|_{L^2},
\end{align*}
where we used the uniform $H^4$ bound on $\bar r$ and the boundedness of $\operatorname{supp}(\bar r)$.
Similarly,
\begin{align*}
|(D^{3}\mathbf{I}_2,D^{3}(\bar r-\tilde r))_{L^2}|
&\lesssim
({\|\omega-\tilde \omega_{\rm ext}\|_{H^2\cap \dot H^{-1}}}+
\lambda^{-1})\|\bar r-\tilde r\|_{H^3}.
\end{align*}
By the explicit formula \eqref{eq:tilde-omega}, the separation of supports as in \eqref{eq:estu-supp-new}, and Sobolev embedding, the velocity is smooth in $B_{c/\lambda}(0)$ and satisfies
\begin{align*}
\|\mathbf u[\tilde\omega_{\rm ext}]\|_{C^2(B_{c/\lambda}(0))}
\lesssim
\|\mathbf u[\tilde\omega_{\rm ext}]\|_{H^4(B_{c/\lambda}(0))}
\lesssim
\|\tilde\omega_{\rm ext}\|_{L^2\cap\dot H^{-1}}.
\end{align*}
Here, the estimate of the velocity near the origin relies on the separation between $B_{c/\lambda}(0)$ and $\operatorname{supp}(\tilde\omega)$.

The remaining terms in the equation for $\de\bar r-\de\tilde r$ are estimated in the same way, using the above Taylor expansion to control
\begin{align*}
\left(
\de_{x_1}\left(
\de_{x_1}u_1[\omega](x_1,-x_2)\cdot\nabla\bar r
-\lambda\bold u[\tilde{\omega}_{\rm ext}]
\left(\frac{x}{\lambda}\right)\cdot\nabla\tilde r
\right),
\de\bar r-\de\tilde r
\right)_{H^2}.
\end{align*}
The vorticity estimate is analogous and is therefore omitted.
\subsubsection*{Estimate of $\|\tilde \rho_{{\rm ext}}(x, t)+x_2\|_{H^4}, \|\tilde \omega_{{\rm ext}}(x, t)\|_{H^3\cap \dot H^{-1}}$ and $\left\|\lambda \tilde{\rho}_{\lambda, {\rm int}}\left(\frac{x}{\lambda}\right)\right\|_{H^4}, \left\|\tilde{\omega}_{\lambda, {\rm int}}\left(\frac{x}{\lambda}\right)\right\|_{H^3\cap \dot H^{-1}}$.}
Writing
\begin{align*}
\tilde \rho_{\rm ext}-\rho
=
(\tilde \rho_{\rm ext}+x_2)-(\rho+x_2),
\end{align*}
the first inequality in \eqref{ineq:shortime}, together with the assumption that $\rho+x_2$ is bounded in $H^4$, implies that $\tilde \rho_{\rm ext}+x_2$ is bounded in $H^3$. Returning to the system \eqref{eq:sum1}, we observe that $\tilde \rho_{\rm ext}+x_2$ satisfies
\begin{align*}
\partial_{t}(\tilde{\rho}_{{\rm ext}}(x,t)+x_2)+\mathbf{u}[\tilde{\omega}_{\lambda, {\rm int}}+\tilde{\omega}_{{\rm ext}}]\cdot \nabla (\tilde{\rho}_{{\rm ext}}(x,t)+x_2)=u_2[\tilde{\omega}_{{\rm ext}}(x,t)].
\end{align*}
Then, we have:
\begin{align*}
    \frac{d}{dt}(\|\tilde \rho_{{\rm ext}}+x_2\|_{H^4}^2+\|\tilde \omega_{{\rm ext}}\|_{H^3\cap \dot H^{-1}}^2)&\lesssim (\left\|\mathbf{u} \left[\tilde \omega_{\lambda, {\rm int}}\right]\right\|_{H^4(\R^2\setminus B_\delta(0))} +\left\|  \mathbf{u}  \left[\tilde\omega_{{\rm ext}}\right]\right\|_{H^4}) \\
    &\times (\|\tilde \rho_{{\rm ext}}+x_2\|_{H^4}^2+\|\tilde \omega_{{\rm ext}}\|_{H^3\cap \dot H^{-1}}^2)\\
    &\quad + \|\tilde \rho_{{\rm ext}}\|_{H^4}\|\tilde \omega_{{\rm ext}}\|_{H^3\cap \dot H^{-1}},
\end{align*}
where $\|\bold u [\tilde \omega_{\lambda, {\rm int}}]\|_{H^4(\R^2\setminus B_\delta(0))}$ is bounded by \eqref{eq:estu-supp-new} and \eqref{ineq:shortime}, $\|\mathbf{u}[\tilde \omega_{{\rm ext}}]\|_{H^4} \lesssim \|\Delta^{-1}(\de_{x_2}, - \de_{x_1}) \tilde \omega_{{\rm ext}}\|_{H^4}\lesssim\|\tilde\omega_{{\rm ext}}\|_{H^3\cap \dot H^{-1}}$, and $\|\tilde\omega_{{\rm ext}}\|_{H^3\cap \dot H^{-1}}$ is uniformly bounded as well as $\|\tilde \rho_{{\rm ext}}+x_2\|_{H^3}$, as just observed. The bound follows by Grönwall inequality.
The same approach as before applies also for $\left\|\lambda \tilde{\rho}_{\lambda, {\rm int}}\left(\frac{x}{\lambda}\right)\right\|_{H^4}$, 
writing the equation for $\tilde{\rho}_{\lambda, {\rm int}}$ in the form
\begin{align*}
\partial_{t}\tilde{\rho}_{\lambda, {\rm int}}(x,t)+\bold{u}[\tilde{\omega}_{\lambda, {\rm int}}+\tilde{\omega}_{{\rm ext}}]\cdot \nabla \tilde{\rho}_{\lambda, {\rm int}}=0,
\end{align*}
and using that $\| \mathbf{u}[\tilde{ \omega}_{\lambda, {\rm int}}]\|_{H^4}$ and $\|\mathbf{u}[\tilde \omega_{{\rm ext}}]\|_{H^4}$ are uniformly bounded. A similar reasoning applies to the estimate of $\left\|\lambda \tilde{\omega}_{\lambda, {\rm int}}\left(\frac{x}{\lambda}\right)\right\|_{H^3\cap \dot H^{-1}}$. Continuity in time  $C([0,T]; H^4 (\R^2))$ then follows by standard results from well-posedness in $H^{4}$ for the two-dimensional Boussinesq equations.
This concludes the proof.
\end{proof}


Although simpler, equation \eqref{simp1} in Lemma \ref{gluing1} remains difficult to solve due to the quadratic term. In the next lemma, we establish some properties of an approximation to \eqref{simp1}.
\begin{lem}\label{gluing2}
Given a constant $M>0$, a time $T>0$, and $k(t)<0$ for all $t\in(0,T]$ such that
\[
M\geq - \int_{0}^{T}k(t)\,dt,
\]
and $\rho_{\rm in}(x)\in H^{4}$ compactly supported, there exist $C=C(M),a_{0}>0$ depending on $M$ and the choice of  $\rho_{\rm in}(x)$ such that, for $a\leq a_{0}$, consider
\begin{align}
    \partial_t \tilde{\rho} + k(t)(x_{1},-x_{2}) \cdot \nabla \tilde{\rho} &= 0, \notag \\
    \partial_t \tilde{\omega} + k(t)(x_{1},-x_{2}) \cdot \nabla \tilde{\omega} &= -\partial_{x_1}\tilde{\rho}, \label{eq:1stsystem} \\[0.8em]
    \partial_t \bar{\rho} + \big(k(t)(x_{1},-x_{2}) + \mathbf{u} [\bar{\omega}]\big) \cdot \nabla \bar{\rho} &= 0, \notag \\
    \partial_t \bar{\omega} + \big(k(t)(x_{1},-x_{2}) + \mathbf{u} [\bar{\omega}]\big) \cdot \nabla \bar{\omega} &= -\partial_{x_1} \bar{\rho}, \label{eq:2ndsystem} \\[0.8em]
    \bar{\rho}(x,0) = \tilde{\rho}(x,0) &= a\rho_{\rm in}(x), \notag \\
    \bar{\omega}(x,0) = \tilde{\omega}(x,0) &= 0. \notag
\end{align}
Then, for $t\in[0,T]$, we have
\begin{align}
\|\tilde{\rho}-\bar{\rho}\|_{H^3}
+\|\tilde{\omega}-\bar{\omega}\|_{H^2\cap \dot{H}^{-1}}
& \leq C a^2, \notag\\
\de_{t}\left(
\|\tilde{\rho}-\bar{\rho}\|_{H^3}
+\|\tilde{\omega}-\bar{\omega}\|_{H^2\cap \dot{H}^{-1}}
\right)
&\leq C a^2.
\end{align}
\end{lem}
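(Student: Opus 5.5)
The plan is to treat $(\bar\rho,\bar\omega)$ as a perturbation of the linear solution $(\tilde\rho,\tilde\omega)$ and close an energy estimate for the difference. The linear system \eqref{eq:1stsystem} is solved explicitly along the flow $\Phi$ of Lemma \ref{stabledeformation}, as in \eqref{eq:tilde-rho}--\eqref{eq:tilde-omega}:
\[
\tilde\rho=a\rho_{\rm in}\circ\Phi^{-1},\qquad \tilde\omega=-a\,(\de_{x_1}\rho_{\rm in})\circ\Phi^{-1}\int_0^t \mathrm{e}^{-\int_0^\tau k}\,d\tau .
\]
Since $\mathrm{e}^{-M}\le \mathrm{e}^{\int_0^t k}\le 1$, the map $\Phi(t)$ and its inverse are linear with entries bounded by $\mathrm{e}^{M}$. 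Hence
\[
\|\tilde\rho\|_{H^4}+\|\tilde\omega\|_{H^3\cap\dot H^{-1}}\le C(M)\,a\,\|\rho_{\rm in}\|_{H^4}
\]
uniformly on $[0,T]$, with $T\le \mathrm{e}^{M}\cdot$(the integral factor). The $\dot H^{-1}$ bound follows because $\tilde\omega$ is an $x_1$-derivative of a compactly supported function composed with a linear map, which makes it a divergence of an $L^2$ field. The factor $T$ enters through $\int_0^t \mathrm{e}^{-\int k}\le \mathrm{e}^M T$, so $C$ also depends on $T$.

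Next I set $r=\bar\rho-\tilde\rho$ and $w=\bar\omega-\tilde\omega$, which satisfy
\[
\de_t r+k(x_1,-x_2)\cdot\nabla r+\mathbf u[\bar\omega]\cdot\nabla r=-\mathbf u[\bar\omega]\cdot\nabla\tilde\rho,
\]
\[
\de_t w+k(x_1,-x_2)\cdot\nabla w+\mathbf u[\bar\omega]\cdot\nabla w=-\mathbf u[\bar\omega]\cdot\nabla\tilde\omega-\de_{x_1}r,
\]
with zero initial data. I run $H^3\times(H^2\cap\dot H^{-1})$ energy estimates exactly as in the proof of Lemma \ref{gluing1}, using the norm $\sum_{j=-1}^2(\|D^{j+1}r\|^2+\|D^jw\|^2)$.

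The individual terms are handled as follows:
\begin{itemize}
\item The linear strain term $k(x_1,-x_2)\cdot\nabla$ is divergence free. Its commutator with $D^{j}$ is again of order $|k|$ times the same norm, since $\nabla$ of the field is the constant matrix $k\,\mathrm{diag}(1,-1)$. Integrated in time, this contributes at most a factor $\mathrm{e}^{CM}$ in Grönwall.
\item The transport terms $\mathbf u[\bar\omega]\cdot\nabla$ are treated with the same commutator bounds as $\mathbf I_1,\mathbf I_2$ in Lemma \ref{gluing1}. The $j=-1$ term is integrated by parts as done there. These contribute $\|\mathbf u[\bar\omega]\|_{H^3}\lesssim\|\bar\omega\|_{H^2\cap\dot H^{-1}}$ times the energy.
\item The coupling $\de_{x_1}r$ is bounded by the norm itself.
\item The source terms are quadratic, since $\mathbf u[\bar\omega]=\mathbf u[\tilde\omega]+\mathbf u[w]$. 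They are bounded by $(\|\tilde\omega\|+\|w\|)(\|\tilde\rho\|_{H^4}+\|\tilde\omega\|_{H^3})\lesssim C(M)\,a\,(a+\|w\|)$.
\end{itemize}

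With $E(t)$ denoting the $H^3\times(H^2\cap\dot H^{-1})$ norm of $(r,w)$, these give
\[
E'\le C(M)\big(1+|k|+a+E\big)E+C(M)a^2 .
\]

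The main obstacle is closing this estimate on the whole interval $[0,T]$, since the bound on $\mathbf u[\bar\omega]$ is not known a priori; local well-posedness only gives it for short time. I would use a bootstrap: assume $E\le 1$ on $[0,t^*]$. Grönwall then yields $E(t)\le C(M)a^2\,\mathrm{e}^{C(M)(T+M)}$. For $a\le a_0$ this is at most $1/2$, so the bootstrap closes and the solution persists on $[0,T]$ by the $H^s$ continuation criterion in the local well-posedness lemma. Some care is needed because $\int|k|=M$ is controlled but $k$ itself may not be bounded; for this reason the $|k|$-term must enter Grönwall only through $\int|k|$, never through $\sup|k|$.

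Finally, the bound on $\de_t E$ follows by plugging $E\le Ca^2$ and the bounds on $\tilde\rho$, $\tilde\omega$ back into the differential inequality. Note that this gives $\de_t E\le C(1+|k(t)|)a^2$. So either $k$ is assumed bounded (as it is for the smooth solutions of Lemma \ref{lem:stable-exact-sol}, where it is continuous), or the constant is allowed to depend on $\sup|k|$; I would state this dependence explicitly.
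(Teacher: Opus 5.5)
Your proposal is correct and follows essentially the paper's proof. Both bootstrap on $\|\bar\rho-\tilde\rho\|_{H^3}+\|\bar\omega-\tilde\omega\|_{H^2\cap\dot H^{-1}}\le 1$, run commutator energy estimates with quadratic sources of size $a(a+E)$, and close with Gr\"onwall using only $\int_0^T|k|\le M$. The variations are minor: you bound $(\tilde\rho,\tilde\omega)$ through the explicit flow formula and write $\bar\omega=\tilde\omega+w$, whereas the paper runs a separate $H^4\times H^3$ estimate for $(\bar\rho,\bar\omega)$; and you keep the $\dot H^{-1}$ level inside the energy, whereas the paper reduces it to $L^2$ via compact support and parity.

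Your caveat on the time-derivative bound is well founded. The paper's own inequality $\partial_t A\le(|k(t)|+Ca+A)A+Ca^2$, combined with $A\le Cta^2$, only yields $\partial_t A\le C(1+t|k(t)|)a^2$. So the constant there implicitly depends on $\sup|k|$ and, as in your version, on $T$. This is harmless in the applications, where $k$ is continuous on $[0,T]$.
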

\begin{proof}
We start by showing the result without the time derivative. We will argue by a bootstrapping argument, obtaining the bounds for the times when
\[
\|\tilde{\rho}-\bar{\rho}\|_{H^3}
+\|\tilde{\omega}-\bar{\omega}\|_{H^2\cap \dot{H}^{-1}}
\leq 1,
\]
and therefore closing the argument for $a$ small enough. By local well-posedness, the above bounds hold on a sufficiently short time interval. Moreover, for such times, $\bold u[\bar\omega]$ remains bounded in $C^1$, so the supports of $\bar\omega$ and $\tilde\omega$ remain contained in a fixed ball. Hence, and using the parity of the solutions,
\begin{align*}
\|\tilde\omega\|_{\dot H^{-1}}
+\|\bar\omega\|_{\dot H^{-1}}
+\|\tilde\omega-\bar\omega\|_{\dot H^{-1}}
\lesssim
\|\tilde\omega\|_{L^2}
+\|\bar\omega\|_{L^2}
+\|\tilde\omega-\bar\omega\|_{L^2}.
\end{align*}
Thus, it suffices to establish the corresponding $L^2$ estimates.

Taking derivatives in the equations for $\bar\rho$ and $\bar\omega$, multiplying by $D^4\bar\rho$ and $D^3\bar\omega$, respectively, and using the smoothing properties of the Biot--Savart operator, we obtain
\begin{align*}
\|\nabla\mathbf u[\de_{x_1}\bar\omega]\|_{L^\infty}
\lesssim
\|\mathbf u[\de_{x_1}\bar\omega]\|_{H^3}
\lesssim
\|\bar\omega\|_{H^3},
\end{align*}
and therefore
\begin{align*}
\frac{1}{2}\frac{d}{dt}
\left(
\|\bar\rho\|_{H^4}^2+\|\bar\omega\|_{H^3}^2
\right)
\lesssim
\left(
|k(t)|+\|\bar\omega\|_{H^3}
\right)
\left(
\|\bar\rho\|_{H^4}^2+\|\bar\omega\|_{H^3}^2
\right).
\end{align*}
Grönwall's inequality then gives
\begin{align*}
\|\bar\rho(t)\|_{H^4}^2+\|\bar\omega(t)\|_{H^3}^2
\leq
Ca^2
\exp\left(
2M+C\int_0^t\|\bar\omega(\tau)\|_{H^3}\,d\tau
\right)
\leq
C(M)a^2,
\end{align*}
provided that $a>0$ is sufficiently small. Here and below, $C=C(M)$ denotes a constant depending only on $M$.

We now consider the equations for the difference. Using the above bounds for $\tilde\omega$ and $\bar\omega$, and setting
\begin{align*}
A(t):=
\|\bar\rho-\tilde\rho\|_{H^3}
+
\|\bar\omega-\tilde\omega\|_{H^2\cap\dot H^{-1}},
\end{align*}
the preceding estimates yield
\begin{align*}
\partial_t A
\leq
\left(
|k(t)|+Ca+A
\right)A+Ca^2.
\end{align*}
Since $a$ is sufficiently small, another application of Grönwall's inequality gives
\begin{align*}
A(t)\leq Cta^2,
\qquad
\partial_t A(t)\leq Ca^2,
\end{align*}
on the time interval under consideration, as claimed. Similarly, to obtain the bound for $\|\tilde{\omega}-\bar{\omega}\|_{\dot{H}^{-1}}$, we use that
$$\partial_{t}\|\tilde{\omega}-\bar{\omega}\|_{\dot{H}^{-1}}\leq C|k(t)|( \|\tilde{\omega}-\bar{\omega}\|_{\dot{H}^{-1}}+\|\tilde{\rho}-\bar{\rho}\|_{L^2})$$
which gives the desired missing bound after integrating in time.
\end{proof}


\begin{lem}\label{inductiondeform}
    Given $\epsilon_{1},\epsilon_{2},\epsilon_{3},T,M,K>0$, with  $\epsilon_{2},\epsilon_{3} <1$, $K>1$ and a symmetric solution $\rho(x,t), \omega(x,t)\in C^\infty$ to \eqref{eq:2Dbouss-vorticity} for $t\in[0, T]$, with stable initial condition \eqref{eq:data-stable-pert}, such that $\rho(x,t)+x_2 \in H^4$, $\omega(x,t)\in H^3\cap \dot H^{-1}$, $\omega(x,0)=0$ and $\supp(\rho(x,t), \omega(x, t))\cap B_{\delta}(0)=\emptyset$ for some $\delta>0$, and
    \begin{align*}
        k(t)&=\de_{x_{1}}u_1[\omega](x=0,t)<0 \text{ for } t>0,\quad  -\int_{0}^{t}k(\tau) \, d \tau =: \widetilde M(t) \le M\\
        \|\rho(x,t)+x_2\|_{C^1}&<K, \quad \|\omega(x,t)\|_{L^{\infty}}<K, \quad \|\rho(x,t=0)+x_2\|_{H^2}< \epsilon_{1},
    \end{align*}
    we can find a function ${\rho}_{\rm pert}(x, 0)$ with 
    $$\supp(\rho(x,0))\cap\supp ({\rho}_{\rm pert}(x, 0))=\emptyset,$$ 
    such that the solution $(\rho_{\rm new}(x,t), \omega_{\rm new}(x, t))$ to \eqref{eq:2Dbouss-vorticity} with initial conditions $-x_2+\rho(x,t=0)+{\rho}_{\rm pert}(x, 0)$, $\omega_{\rm new}(x,0)=0$ is symmetric,  $ \rho_{\rm new}(x, t)+x_2 \in H^4$,  $(\supp(\rho_{\rm new}(x,t))\cup\supp(\omega_{\rm new}(x,t)))\cap B_{\delta_{\rm new}}(0)=\emptyset$ for some $\delta_{\rm new}>0$, and
    \begin{align}
    k_{\rm new}(t) &:= \partial_{x_{1}}u_1[\omega_{\rm new}](0,t) < 0 \quad \text{for } t>0, \nonumber \\
    C\epsilon_{2}T^2 &\ge \int_{0}^{T}\big(-k_{\rm new}(\tau)+k(\tau)\big)\, d\tau \ge \epsilon_{2}c_{M}T^2, \label{eq:estknew} \\
    \int_{0}^{t}\big(-k_{\rm new}(\tau)+k(\tau)\big)\, d\tau &\ge \frac{ct^2}{T^2}e^{-\widetilde{M}(t)}\int_{0}^{T}\big(-k_{\rm new}(\tau)+k(\tau)\big)\, d\tau, \label{eq:estknew2} \\
    \|\rho_{\rm pert}(\cdot,0)\|_{H^2} &< \epsilon_{2}, \; \|\rho_{\rm new}(\cdot,t)-\rho(\cdot,t)\|_{C^1}, \; \|\omega_{\rm new}(\cdot,t)-\omega(\cdot,t)\|_{L^{\infty}} < \epsilon_{2}, \label{est:rhonew} \\
    \|\rho_{\rm new}(\cdot,t)+x_2\|_{C^1}&<K, \; \|\omega_{\rm new}(\cdot,t)\|_{L^{\infty}} < K. \label{maxcrit}
    \end{align}
with $C$ in \eqref{eq:estknew} and $c>0$ in \eqref{eq:estknew2}  universal constants and $c_{M}>0$ a constant depending only on $M$.
    Furthermore, we have that
    \begin{align*}
    &\|\rho(x,t)-\rho_{\rm new}(x,t)\|_{C^{2.5}(\R^2\setminus B_{\delta}(0))}, \quad \|\omega(x,t)-\omega_{\rm new}(x,t)\|_{C^{1.5}(\R^2\setminus B_{\delta}(0))}\leq \epsilon_{3},\\
    &\|\omega(x,t)-\omega_{\rm new}(x,t)\|_{L^2}, \quad \|\omega(x,t)-\omega_{\rm new}(x,t)\|_{\dot{H}^{-1}}\leq \epsilon_{3}.
    \end{align*}

\end{lem}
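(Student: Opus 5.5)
We take the perturbation to be a small, highly concentrated bump placed at the origin, inside the hole $B_\delta(0)$ where $\rho+x_2$ and $\omega$ vanish. Concretely, we set
\[
\rho_{\rm pert}(x,0)=\frac{a}{\lambda}\,g(\lambda x),
\]
where $g\in C^\infty_c$ is symmetric in the sense of Definition \ref{def:symmetric}, is supported in $\{|x_2|\ge 2|x_1|\}\cap(B_2(0)\setminus B_1(0))$, and satisfies $x_1x_2\partial_{x_1}g\ge0$, with strict inequality on a set of positive measure. The parameter $a\in(0,a_0]$ is chosen small in terms of $\epsilon_2$ and $M$; it controls $\|\rho_{\rm pert}(\cdot,0)\|_{H^2}\lesssim a$, which is scale invariant. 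The parameter $\lambda$ is taken large at the end. Disjointness of supports with $\rho(\cdot,0)+x_2$ holds once $2/\lambda<\delta$, and the symmetry of the new solution is preserved by the equation.

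The new solution is then compared with the superposition $(\rho+\bar\rho_{\lambda,\rm int},\ \omega+\bar\omega_{\lambda,\rm int})$ through Lemma \ref{gluing1} with $f=ag$. In turn, $\bar\rho_{\lambda,\rm int},\bar\omega_{\lambda,\rm int}$ are compared, via Lemma \ref{gluing2} after rescaling $x\mapsto\lambda x$, with the linear solution $(\tilde\rho,\tilde\omega)$ of \eqref{eq:1stsystem} driven by $k(t)$. For this linear part Lemma \ref{stabledeformation} gives an explicit formula. Since $\partial_{x_1}u_1$ at $0$ is invariant under $\omega(\cdot)\mapsto\omega(\lambda\,\cdot)$, we get
\[
C\,a\,c_g\,t\ \ge\ -\partial_{x_1}u_1[\tilde\omega](0,t)\ \ge\ C_M\,a\,c_g\,t,
\qquad c_g:=\partial_{x_1}u_1[\partial_{x_1}g](0)>0 .
\]
Integrating over $[0,T]$ gives a contribution of order $aT^2$, with constants depending only on $M$ and $g$. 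We choose $a$ so that $a c_g=\epsilon_2$, after fixing the normalization of $g$; if this conflicts with the $H^2$ smallness, we shrink $g$ and compensate by a universal constant.

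The error terms are handled as follows. By Lemma \ref{gluing2} the nonlinear correction in $\partial_{x_1}u_1(0,t)$ is $O(a^2)$. By Lemma \ref{gluing1} the remaining discrepancies are $O(\lambda^{-1})$ in $H^3\times(H^2\cap\dot H^{-1})$ for both the exterior and the (rescaled) interior pieces. By Sobolev embedding this controls $\partial_{x_1}u_1(0,t)$, since the $\dot H^{-1}\cap H^2$ norm of the vorticity bounds $\nabla u$ in $L^\infty$ scale-invariantly. Hence
\[
-k_{\rm new}(t)+k(t)=-\partial_{x_1}u_1[\tilde\omega](0,t)+O(a^2 t)+O(\lambda^{-1}t).
\]
Taking $a$ small relative to $C_M$ and then $\lambda$ large yields \eqref{eq:estknew}. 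It also yields $k_{\rm new}<0$ for $t>0$, since both $k$ and the leading term are negative. Note that the errors must vanish at $t=0$ (linearly in $t$) for the sign to hold near $t=0$; this uses that each error estimate in Lemmas \ref{gluing1}, \ref{gluing2} carries a factor $t$.

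For \eqref{eq:estknew2}, we apply Lemma \ref{defshort} to the linear piece. The error terms are $O((a^2+\lambda^{-1})t)$, so they integrate to $O((a^2+\lambda^{-1})t^2)$; this is dominated by $\tfrac{c}{2}t^2T^{-2}e^{-M}\cdot aT^2$ once $a$ is small relative to $e^{-M}$ and $\lambda$ is large.

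The bounds \eqref{est:rhonew} and \eqref{maxcrit} rest on scaling. The interior density is $O(a/\lambda)$ in $L^\infty$ and $O(a)$ in $C^1$, and the interior vorticity is $O(a)$ in $L^\infty$. The exterior deviation is $O(\lambda^{-1})$ in $H^3$, hence in $C^1$. Since the $C^1$ bound $K$ is strict and $[0,T]$ is compact, there is room for an $O(a)$ increment; the maximum of $\|\rho+x_2\|_{C^1}$ and $\|\omega\|_{L^\infty}$ is not attained on the small support near the origin, because the hole means the old solution vanishes there. The estimates away from $B_\delta(0)$ in $C^{2.5}$ and $C^{1.5}$, and the $L^2$ and $\dot H^{-1}$ estimates, follow from the $O(\lambda^{-1})$ exterior estimate in $H^3\times H^2$ together with the $\lambda^{-1}$-smallness of the interior vorticity in $L^2$ and $\dot H^{-1}$ (see the remark after Lemma \ref{gluing1}). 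Choosing $\lambda$ large makes all of these at most $\epsilon_3$. The new hole radius $\delta_{\rm new}$ comes from the support control in Section \ref{sec:support}, because the velocity vanishes at the origin.

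The main obstacle is the order of quantifiers: $a$ must be fixed first, depending on $\epsilon_2$, $M$ and $K$, and only then $\lambda\to\infty$. The constant in Lemma \ref{gluing1} depends on uniform $H^4$ bounds of the rescaled interior solution, and those are supplied by Lemma \ref{gluing2} with constants depending on $M$ only. We also have to check that $k(t)\le0$ is used with the old $k$, which is given, so no circularity arises.
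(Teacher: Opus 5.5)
Your proposal follows the paper's proof essentially step by step: the same rescaled bump $a\,g(\lambda x)/\lambda$ placed in the hole, and the same comparison chain (Lemma \ref{gluing1} for the exterior/interior splitting, Lemma \ref{gluing2} for the nonlinear interior against the linear flow, Lemma \ref{stabledeformation} for the leading term, Lemma \ref{defshort} for \eqref{eq:estknew2}), with $a$ fixed before $\lambda\to\infty$; your treatment of the sign of $k_{\rm new}$ near $t=0$, of the error in \eqref{eq:estknew2}, and of the strict margin below $K$ is more explicit than the paper's. Two small corrections are needed. First, take $a=a_0\epsilon_2$ with $a_0$ small depending on $M$, and also on $T$ through $\|\omega_{\rm pert}\|_{L^\infty}\lesssim aTe^{M}$; the choice $ac_g=\epsilon_2$ is incompatible with requiring $a\ll C_M$. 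Second, the $C^{2.5}$ and $C^{1.5}$ bounds off $B_\delta(0)$ do not follow from the $O(\lambda^{-1})$ bounds in $H^3\times H^2$ by Sobolev embedding in two dimensions; you must interpolate with the uniform $H^4\times H^3$ bounds from Lemma \ref{gluing1} to get $O(\lambda^{-1/2})$ in $H^{3.5}\times H^{2.5}$, as the paper does.
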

\begin{proof}
We consider an initial condition $\rho(x,t=0)+{\rho}_{\rm pert}(x,t=0)$, where ${\rho}_{\rm pert}(x,t=0)=a\frac{f(\lambda x)}{\lambda}$, with $f(x)$ compactly supported, odd symmetric in $x_2$, even symmetric in $x_1$ and monotonous increasing in $x_{1}$ for $x_{1}\geq 0$, $x_2>0$, so that $x_1x_2\partial_{x_1}{\rho}_{\rm pert}(x,t=0)\geq 0$, that satisfies the support conditions of Lemma \ref{stabledeformation}, together with $\text{supp}f(x) \cap B_{\frac 14}(0)=\emptyset$, and $a \le a_0$ as in Lemma \ref{gluing2}. We also set $\omega_{\mathrm{pert}}(x, 0)=0$.
First, note that for $\lambda$ so large that 
\begin{align*}
    \frac{\text{diam}(\supp (f(x))}{\lambda} < \delta,
\end{align*}
it holds
\begin{equation}\label{eq:suppinitiallemmadeform}
    \text{supp}(\rho(x,0))\cap\text{supp} ({\rho}_{\rm pert}(x, 0))=\emptyset.
\end{equation}
We now compare the solution
$(\rho_{\mathrm{new}},\omega_{\mathrm{new}})$
with the approximate solution
$(\rho+\rho_{\mathrm{pert}},\omega+\omega_{\mathrm{pert}})$,
where $(\rho_{\mathrm{pert}},\omega_{\mathrm{pert}})$ solves
\eqref{eq:lemmastabledef} (equivalently, the first system in Lemma \ref{gluing2}).
By increasing $\lambda$ if necessary, we may assume that, for all
$t\in[0,T]$,
\begin{equation}
    \operatorname{supp}(\rho(x,t))
    \cap
    \operatorname{supp}(\rho_{\mathrm{pert}}(x,t))
    =\emptyset.
\end{equation}

Recalling that $({\rho}_{\rm pert}, \omega_{\mathrm{pert}})$ solves \eqref{eq:1stsystem}, we can introduce $(\bar{\rho}_{\rm pert}, \bar\omega_{\mathrm{pert}})$ solving \eqref{eq:2ndsystem}
with the same initial condition $\bar {\rho}_{\rm pert}(x, t=0)={\rho}_{\rm pert}(x, t=0)$. 

Now we appeal to Lemma \ref{gluing1}. We rename
\begin{align}
    (\bar\rho_{\mathrm{pert}}, \bar\omega_\mathrm{pert})=:(\bar \rho_{\mathrm{int}}, \bar\omega_{\mathrm{int}})
\end{align}
solving the first system of Lemma \ref{gluing1}. {By Lemma \ref{gluing1}, for $\lambda$ big enough we can find $(\tilde \rho_{\mathrm{ext}}, \tilde \omega_{\mathrm{ext}})$ with initial condition $(\tilde \rho_{\mathrm{ext}}, \tilde \omega_{\mathrm{ext}})(x, t=0)=(\rho(x, t=0), 0)$ that satisfies \eqref{eq:sum1}.}
Moreover, Lemma \ref{gluing1} implies that $\tilde \rho_{\rm int}(x, t) + \tilde \rho_{\mathrm{ext}} (x,t), \tilde \omega_{\rm int}(x, t) + \tilde \omega_{\mathrm{ext}} (x,t),$ is an exact solution to \eqref{eq:2Dbouss-vorticity} with initial data 
\begin{align}
    \rho(x, t=0) + {\rho}_{\rm pert} (x,t=0)=\rho_{\rm new}(x, t=0), \omega(x,0)=0
\end{align}
where $\rho_{\rm new}(x,t), \omega_{\rm new}(x,t)\in C^\infty$ is an exact solution to \eqref{eq:2Dbouss-vorticity} as well, with the same initial conditions. By uniqueness, $(\rho_{\rm new}(x,t),\omega_{\rm new}(x,t))$ and $(\tilde\rho_{\rm int}(x, t) + \tilde \rho_{\mathrm{ext}} (x,t),\tilde \omega_{\rm int}(x,t)+\tilde \omega_{\mathrm{ext}}(x,t))$ must coincide at least for $t \in [0,T]$. 
To apply Lemma \ref{gluing1}, we need to verify that $\left\|\lambda \bar{\rho}_{\rm pert}\left(\frac{x}{\lambda}, t\right) \right\|_{H^4}$, $\left\| \bar{\omega}_{\rm pert}\left(\frac{x}{\lambda}, t\right) \right\|_{H^3\cap \dot{H}^{-1}}$ are uniformly bounded for all $t \in [0,T]$.

To this end, first note that, by standard commutator and energy estimates we have, if we define
$$A:=\left\|\lambda \bar{\rho}_{\rm pert}\left(\frac{x}{\lambda}, t\right) \right\|_{H^4}+\left\| \bar{\omega}_{\rm pert}\left(\frac{x}{\lambda}, t\right) \right\|_{H^3\cap \dot{H}^{-1}}$$

$$\partial_{t}A(t)\leq CA(t)(|k(t)|+A(t))$$
and therefore, if $A(t=0)$ is small enough (depending only on $M$)
\begin{align}
    A(t) \lesssim \mathrm{e}^{ CM} A(0),
\end{align}
and we have
\begin{align}
    A(t=0) \lesssim a.
\end{align}
%
%
%
%
We can therefore choose $a$ small enough to deduce that $\left\|\lambda \bar {\rho}_{\rm pert}\left(\frac{x}{\lambda}, t\right)\right\|_{H^4}+\left\| \bar{\omega}_{\rm pert}\left(\frac{x}{\lambda}, t\right) \right\|_{H^3\cap\dot{H}^{-1}} $ is uniformly bounded for all $t \in [0,T]$. 
We can thus apply Lemma \ref{gluing1} for $\lambda$ big enough, obtaining
\begin{align}
    &\|\rho(x, t)-\tilde{\rho}_{{\rm ext}}(x,t)\|_{H^3}\le C t \lambda^{-1}, \; \|\omega(x, t)-\tilde{\omega}_{{\rm ext}}(x,t)\|_{H^2\cap\dot{H}^{-1}}\le C t \lambda^{-1}, \\
    & \de_{t}\|\rho(x,t)-\tilde{\rho}_{{\rm ext}}(x,t)\|_{H^3}\le C \lambda^{-1}, \, \de_{t}\|\omega(x,t)-\tilde{\omega}_{{\rm ext}}(x,t)\|_{H^2\cap\dot{H}^{-1}}\le C \lambda^{-1},\label{est:nonscaled-new}\\
    & \left\|\lambda \tilde{\rho}_{{\rm int}}\left(\frac{x}{\lambda}\right)-\lambda \bar{\rho}_{ \text{pert}}\left(\frac{x}{\lambda}\right)\right\|_{H^3}\leq C t\lambda^{-1}, \, \left\| \tilde{\omega}_{{\rm int}}\left(\frac{x}{\lambda}\right)- \bar{\omega}_{\text{pert}}\left(\frac{x}{\lambda}\right)\right\|_{H^2\cap\dot{H}^{-1}}\leq C t\lambda^{-1}\label{est:scaled-omega}\\
    &\de_{t}\left\|\lambda \tilde{\rho}_{ {\rm int}}\left(\frac{x}{\lambda}\right)-\lambda \bar{\rho}_{ \text{pert}}\left(\frac{x}{\lambda}\right)\right\|_{H^3}\leq C\lambda^{-1}, \, \de_{t}\left\| \tilde{\omega}_{{\rm int}}\left(\frac{x}{\lambda}\right)- \bar{\omega}_{ \text{pert}}\left(\frac{x}{\lambda}\right)\right\|_{H^2\cap\dot{H}^{-1}}\leq C\lambda^{-1}.\label{est:scaled-new}
\end{align}
Finally, $\left\| \lambda\bar{\rho}_{{\rm int}}\left(\frac{x}{\lambda}, t\right)\right\|_{H^4}, \left\|\bar{\omega}_{ {\rm int}}\left(\frac{x}{\lambda}, t\right)\right\|_{H^3}$ and $\|\tilde \rho_{\mathrm{ext}}(x, t)+x_2\|_{H^4}, \|\tilde \omega_{\mathrm{ext}}(x, t)\|_{H^3}$ are uniformly bounded with respect to $\lambda$ for all $t \in [0,T]$.
Note that the $C^1$ norm is invariant under the scaling $\lambda f (\cdot/\lambda)$. Therefore, 
\begin{align*}
     \left\| \tilde{\rho}_{\rm int}\left({x}\right)- \bar{\rho}_{\rm pert}\left({x}\right)\right\|_{C^1} & = \left\|\lambda \tilde{\rho}_{\rm int}\left(\frac{x}{\lambda}\right)-\lambda \bar{\rho}_{\rm pert}\left(\frac{x}{\lambda}\right)\right\|_{C^1} \\
     & \le C \left\|\lambda \tilde{\rho}_{\rm int}\left(\frac{x}{\lambda}\right)-\lambda \bar{\rho}_{\rm pert}\left(\frac{x}{\lambda}\right)\right\|_{H^3} \le C \lambda^{-1} t,
\end{align*}
Similarly, we estimate
\begin{align*}
     \left\| \mathbf{u}[\tilde{\omega}_{\rm int}\left({x}\right)- \bar{\omega}_{\rm pert}\left({x}\right)]\right\|_{C^1} & = \left\|\mathbf{u}\left[\tilde{\omega}_{\rm int}\left(\frac{x}{\lambda}\right)- \bar{\omega}_{\rm pert}\left(\frac{x}{\lambda}\right)\right]\right\|_{C^1} \\
     & \le  \left\| \tilde{\omega}_{\rm int}\left(\frac{x}{\lambda}\right)- \bar{\omega}_{\rm pert}\left(\frac{x}{\lambda}\right)\right\|_{H^2} \le C \lambda^{-1} t,
\end{align*}
and
\begin{align*}
     \left\| \tilde{\omega}_{\rm int}\left({x}\right)- \bar{\omega}_{\rm pert}\left({x}\right)\right\|_{L^{\infty}} & = \left\| \tilde{\omega}_{\rm int}\left(\frac{x}{\lambda}\right)- \bar{\omega}_{\rm pert}\left(\frac{x}{\lambda}\right)\right\|_{L^{\infty}} \\
     & \le C \left\|\tilde{\omega}_{\rm int}\left(\frac{x}{\lambda}\right)- \bar{\omega}_{\rm pert}\left(\frac{x}{\lambda}\right)\right\|_{H^2} \le C \lambda^{-1} t.
\end{align*}

The time derivative estimate works similarly.
Combining these estimates with those for ${\rho}_{\rm pert}-\bar{\rho}_{\rm pert}$ and ${\omega}_{\rm pert}-\bar{\omega}_{\rm pert}$
, we obtain:
\begin{align*}
    \|\rho_{\rm new}- \rho-{\rho}_{\rm pert}\|_{C^1},\|\omega_{\rm new}-\omega-{\omega}_{\rm pert}\|_{L^{\infty}}&\leq Ct(\lambda^{-1}+a^2),\\
    \|\mathbf{u}[\omega_{\rm new}- \omega-{\omega}_{\rm pert}]\|_{C^1}&\leq Ct(\lambda^{-1}+a^2).
\end{align*}
Then, we obtain \eqref{est:rhonew} using 
$\|{\rho}_{\rm pert}\|_{C^1}\leq ae^{M},\|{\omega}_{\rm pert}\|_{L^{\infty}}\leq aTe^{M}$, taking $\lambda$ large and $a$ so small (depending only on $M$) that $a\leq \frac{\epsilon_2}{2(1+T)e^M}$.
 Next, writing
\begin{align*}
    -k_{\rm new}(t)+k(t)&=-\de_{x_{1}}u[\omega_{\rm new}-\omega](x=0,t)\\
    &\geq -\de_{x_1}u[\omega_{\rm new}- \omega-{\omega}_{\rm pert}] -\de_{x_{1}}u_1[{\omega}_{\rm pert}](x=0,t)
\end{align*}
Note also that by taking $\lambda$ big and $a$ small this last inequality also ensures that $-k_{\rm new}(t)+k(t)=-\de_{x_{1}}u[\omega_{\rm new}-\omega](x=0,t)>0$ for $t>0$.
By Lemma \ref{stabledeformation}, we have %
\begin{align}
    C\tilde{C}aT^2/2\geq\int_{0}^{T} -\de_{x_{1}}u_1[{\omega}_{\rm pert}](x=0,t)\, d t & \geq C_{1}\text{e}^{-4M}\frac{T^{2}}{2} \de_{x_1}u_1[\de_{x_{1}}\rho_{\rm pert}](x=0, t=0)\notag\\
    & \geq \tilde Ca \frac{\text{e}^{-4M}T^2}{2},\label{est:k}
\end{align}
where, recalling from \eqref{lem:stable-exact-sol} that $\partial_{x_1}u_1(\de_{x_{1}}\rho_{\rm pert})(x=0, 0)$ is positive, so
\begin{align*}
    0<  C_1 \de_{x_1}u_1[\partial_{x_1}\frac{f(\lambda x)}{\lambda}](x=0, 0)  =: \tilde C.
\end{align*}
Then, taking $a$ small enough (depending on $M$) and $\lambda$ big enough that
$$\|\mathbf{u}[\omega_{\rm new}- \omega-{\omega}_{\rm pert}]\|_{C^1}\leq \tilde Ca \frac{\text{e}^{-4M}T^2}{4} ,$$
we get 
\begin{align}\label{almostestknew}
    C\tilde{C}aT^2/2\geq\int_{0}^{T} -\de_{x_{1}}u_1[{\omega}_{\rm pert}](x=0,t)\, d t  \geq \tilde Ca \frac{\text{e}^{-4M}T^2}{4},
\end{align}
and, we can use Lemma \ref{defshort} to obtain that, for $0 \le \tau \le t \le T$,
$$\int_{0}^{t}\big(-k_{\rm new}(\tau)+k(\tau)\big)\, d\tau \ge \frac{ct^2}{T^2}e^{-\widetilde{M}(t)}\int_{0}^{T}\big(-k_{\rm new}(\tau)+k(\tau)\big)\, d\tau.$$
We note that, from \eqref{almostestknew}, if we fix some relationship of the form $a=\epsilon_2 K_{M}$, with $K_{M}>0$, we obtain directly \eqref{eq:estknew}. However, so far, we have found restrictions on $a$ of the form $aK_{1,M}\leq \epsilon_2$ and the form $a\leq K_{2,M}$, with $0<K_{1,M},K_{2,M}<1$.
To ensure that we can satisfy both at the same time without any new $\epsilon_2$ (since, from hypothesis, we only know that it is smaller than $1$), we consider
$$a=\epsilon_2 K_{1,M}K_{2,M}{=:a_0(M) \epsilon_2},$$
which ensures that all the desired bounds for $a$ are fulfilled, and gives us \eqref{eq:estknew}.

%
%

Next, we verify that the support remains separated from the origin. This property holds at $t=0$. Moreover, since
\[
\mathbf{u}[\omega_{\mathrm{new}}](0,t)=0,
\]
the uniform $C^1$ bound
\[
\|\mathbf{u}[\omega_{\mathrm{new}}]\|_{C^1}\leq C,
\]
which follows from the $C^1$ control of
$\mathbf{u}[\omega_{\mathrm{new}}-\omega-\omega_{\mathrm{pert}}]$
and of $\omega,\omega_{\mathrm{pert}}$, implies that the flow approaches the origin at most exponentially. Thus,
\[
\supp(\rho_{\mathrm{new}}(x,t))\cap B_{\delta_{\mathrm{new}}}(0)=\emptyset,
\]
if
\[
\delta_{\mathrm{new}}\leq \frac{c}{\lambda}\mathrm{e}^{-Ct}.
\]
Furthermore, by choosing $\lambda$ sufficiently large so that
we ensure that
\[
\supp(\tilde\rho)\cap\supp(\tilde\rho_{\mathrm{pert}})=\emptyset.
\]

Now, for $t\in[0,T]$, we have
\begin{align*}
\|(\rho_{\mathrm{new}}-\rho)(t)\|_{C^{2.5}(\R^2\setminus B_{\delta}(0))}
&=
\|(\tilde\rho_{\mathrm{ext}}+\tilde\rho_{\mathrm{int}}-\rho)(t)\|_{C^{2.5}(\R^2\setminus B_{\delta}(0))}\\
&\leq
\|(\tilde\rho_{\mathrm{ext}}-\rho)(t)\|_{C^{2.5}(\R^2\setminus B_{\delta}(0))},
\end{align*}
where the contribution of $\tilde\rho_{\mathrm{int}}$ vanishes since
$\supp\tilde\rho_{\mathrm{pert}}\subset B_\delta(0)$.
Similarly,
\[
\|\omega_{\mathrm{new}}-\omega\|_{C^{1.5}(\R^2\setminus B_{\delta}(0))}
\leq
\|\tilde\omega_{\mathrm{ext}}-\omega\|_{C^{1.5}(\R^2\setminus B_{\delta}(0))}.
\]

By Sobolev embedding,
\begin{align*}
\|(\tilde\rho_{\mathrm{ext}}-\rho)(t)\|_{C^{2.5}(\R^2\setminus B_\delta(0))}
&\lesssim
\|(\tilde\rho_{\mathrm{ext}}-\rho)(t)\|_{H^{3.5}(\R^2)}
\leq C\lambda^{-1/2},\\
\|(\tilde\omega_{\mathrm{ext}}-\omega)(t)\|_{C^{1.5}(\R^2\setminus B_\delta(0))}
&\lesssim
\|(\tilde\omega_{\mathrm{ext}}-\omega)(t)\|_{H^{2.5}(\R^2)}
\leq C\lambda^{-1/2}.
\end{align*}
Here, the last bounds follow by interpolation between
\[
\|\tilde\rho_{\mathrm{ext}}-\rho\|_{H^3}\lesssim C\lambda^{-1},
\qquad
\|\tilde\rho_{\mathrm{ext}}-\rho\|_{H^4}\leq C,
\]
and, respectively,
\[
\|\tilde\omega_{\mathrm{ext}}-\omega\|_{H^2}\lesssim C\lambda^{-1},
\qquad
\|\tilde\omega_{\mathrm{ext}}-\omega\|_{H^4}\leq C.
\]

For the $L^2$ and $\dot H^{-1}$ bounds, we directly obtain
\begin{align*}
\|\omega_{\mathrm{new}}-\omega\|_{L^2}
&\leq
\|\tilde\omega_{\mathrm{ext}}-\omega\|_{L^2}
+\|\tilde\omega_{\mathrm{int}}\|_{L^2}
\leq
\lambda^{-1}+C\epsilon_2\lambda^{-1},
\end{align*}
and
\begin{align*}
\|\omega_{\mathrm{new}}-\omega\|_{\dot H^{-1}}
&\leq
\|\tilde\omega_{\mathrm{ext}}-\omega\|_{\dot H^{-1}}
+\|\tilde\omega_{\mathrm{int}}\|_{\dot H^{-1}}
\leq
\lambda^{-1}+C\epsilon_2\lambda^{-2}.
\end{align*}
Finally, combining the estimates on $\R^2\setminus B_\delta(0)$ with the bounds for
$\|\tilde\rho_{\mathrm{int}}\|_{C^1}$ and
$\|\tilde\omega_{\mathrm{int}}\|_{L^\infty}$
gives \eqref{maxcrit}, completing the proof.
\end{proof}


The previous lemma allows us to construct solutions with small $H^2$ norm whose velocity generates a strong deformation near the origin. To complete the proof of the main result, we need to show that this deformation produces rapid growth of the $H^2$ norm. Before proving this, we introduce a short auxiliary result.\\\\

\begin{lem}\label{boundvelocity}
Given $R>0$ and $j\in \N\cup 0$, there exists a constant $C>0$ such that, if $f(x)$ is a $C^{\infty}$ function with support in $B_R(0)$, then, for any $A>2$ and any $\theta_0\in\mathbb R$, we have
\begin{align}
    \|u_1[f(x)\sin(Ax_1+\theta_0)]\|_{C^j}
    &\leq C(\log A)A^{j-2}\|f\|_{C^{j+2}},\\
    \|u_2[f(x)\sin(Ax_1+\theta_0)]\|_{C^{j}}
    &\leq C(\log A)A^{j-1}\|f\|_{C^{j+1}}.
\end{align}
\end{lem}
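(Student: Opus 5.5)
The plan is to write the velocity through the stream function, $\mathbf u=\nabla^\perp\psi$ with $\psi=\Delta^{-1}g$ and $g(x):=f(x)\sin(Ax_1+\theta_0)$. Since $u_1=-\de_{x_2}\psi$ and $u_2=\de_{x_1}\psi$, it suffices to show
\[
\|\de^\beta\psi\|_{L^\infty}\lesssim (\log A)\,A^{|\beta|-3}\|f\|_{C^{|\beta|+1}}
\]
for the relevant multi-indices, with an improvement when no $x_1$-derivative falls on the oscillation. The asymmetry between $u_1$ and $u_2$ will come from this: $u_1$ carries a $\de_{x_2}$, which does not hit the phase $Ax_1$, while $u_2$ carries a $\de_{x_1}$, which does.

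The key device is to invert the oscillation by integration by parts. We have $\sin(Ax_1+\theta_0)=-A^{-2}\de_{x_1}^2\sin(Ax_1+\theta_0)$, and more generally $\sin(Ax_1+\theta_0)=A^{-1}\de_{x_1}\bigl(-\cos(Ax_1+\theta_0)\bigr)$. By the Leibniz rule,
\[
f\sin(Ax_1+\theta_0)=\de_{x_1}\bigl(-A^{-1}f\cos(Ax_1+\theta_0)\bigr)+A^{-1}(\de_{x_1}f)\cos(Ax_1+\theta_0).
\]
Iterating this twice gives
\[
g=\de_{x_1}^2 G_2+\de_{x_1}G_1+G_0,
\]
where each $G_i$ has the form $A^{-2}(\de^{\le 2}f)\times$ (trigonometric factor), supported in $B_R(0)$, and $G_0$ carries $A^{-2}\de_{x_1}^2 f$. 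Derivatives in $x$ commute with $\Delta^{-1}$, so
\[
\de^\beta u_1[g]=\sum \de^{\beta}\de_{x_2}\de_{x_1}^{m}\Delta^{-1}G_m .
\]

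Next I would distribute derivatives so that the remaining operator acting on each $G_m$ is either of order zero, of the form $\de\de\Delta^{-1}$ (a Calderón--Zygmund operator), or of order $-1$ (the Biot--Savart kernel itself). Concretely, for $j$ derivatives on $u_1$, the total order is $j+1$ on $\psi$. I would move all but $m+1$ or $m+2$ of these derivatives (as appropriate) onto the $G_m$. Each derivative landing on the factor $\sin$ or $\cos$ costs a factor $A$; each landing on $f$ costs one more derivative of $f$.

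The needed $L^\infty$ bounds are the following.
\begin{enumerate}[(a)]
\item The Biot--Savart kernel, of homogeneity $|x|^{-1}$ and integrable locally on $B_R$, maps compactly supported $L^\infty$ functions to $L^\infty$ with constant $C(R)$.
\item The Calderón--Zygmund operators $\de_i\de_k\Delta^{-1}$ applied to an oscillatory $h\,e^{iAx_1}$ with $h$ smooth on $B_R$ satisfy $\|T(h e^{iAx_1})\|_{L^\infty}\lesssim (\log A)\|h\|_{C^1}$.
\end{enumerate}
The proof of (b) splits the kernel at $|x-y|=A^{-1}$. The inner part is handled using the Lipschitz regularity of $h e^{iAx_1}$ at scale $A^{-1}$; its $C^1$ norm is about $A\|h\|_{C^1}$, and this is compensated by the factor $A^{-1}$ coming from the cancellation of the kernel. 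The outer annulus $A^{-1}<|x-y|<2R$ contributes at most $\|h\|_{L^\infty}\log(AR)$. This is the source of the $\log A$.

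Counting powers then gives the claim. For $u_1$ and $j$ derivatives, we need $j+1$ derivatives on $\psi$. Two powers of $A^{-1}$ are available from the $G_m$, the operator $\de_{x_2}$ and the $x_2$-derivatives cost nothing in $A$, and at worst all $j$ further derivatives hit the phase. This yields $(\log A)A^{j-2}\|f\|_{C^{j+2}}$. For $u_2=\de_{x_1}\psi$ one power is lost, since that derivative may hit the phase, giving $(\log A)A^{j-1}\|f\|_{C^{j+1}}$. In this case I would only integrate by parts once and reuse the single-$A^{-1}$ version.

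The main obstacle is the bookkeeping that ensures the operator applied at the end is of order at most $0$, so that only the logarithmic loss of (b) appears. I would arrange this by always leaving exactly the two derivatives needed to cancel $\Delta^{-1}$ inside the singular operator, and pushing the rest onto $G_m$ via commutation. A secondary obstacle is that $\Delta^{-1}$ alone is not bounded on compactly supported data. This is avoided because, for $u$, at least one derivative is always present, so we only ever use $\nabla\Delta^{-1}$ (the Biot--Savart kernel) or order-zero operators.
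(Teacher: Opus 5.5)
Your proposal is correct, but it is organized differently from the paper's proof.

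\textbf{The paper's route.} The paper works directly with the kernel integrals. For $u_2$ it splits $\int \frac{h_1}{|h|^2} f(x+h)\sin(Ax_1+Ah_1+\theta_0)\,dh$ into the regions $|h|\le A^{-1}$, $A^{-1}\le |h|\le 1$ and $|h|\ge 1$. On the outer pieces it integrates by parts in $h_1$ against the phase. For $u_1$ it first rewrites $u_1=\frac{1}{2\pi}\int \ln|h|\,\partial_{h_2}\omega(x+h)\,dh$, so that $\partial_{h_2}$ lands on $f$ rather than on the phase, and then repeats the region strategy with two integrations by parts. Higher derivatives are handled through $\partial_{x_i}\mathbf u[\omega]=\mathbf u[\partial_{x_i}\omega]$.

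\textbf{Your route.} You integrate by parts on the source side, writing $f\sin(Ax_1+\theta_0)=\partial_{x_1}^2G_2+\partial_{x_1}G_1+G_0$. You then redistribute derivatives so that only the order-zero operators $\partial_i\partial_k\Delta^{-1}$ or the Biot--Savart kernel remain. You close with a crude log-Lipschitz $L^\infty$ bound for Calder\'on--Zygmund operators on compactly supported data. That bound uses no oscillation at all: all of the gain in $A$ comes from the decomposition.

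\textbf{Comparison.} The essential mechanism is the same in both arguments. First, $\partial_{x_2}$ does not hit the phase; this gives your extra $A^{-1}$ for $u_1$ and is the paper's reason for the $\ln|h|$ rewriting. Second, the $\log A$ comes from $\int_{A^{-1}<|h|<O(1)}|h|^{-2}\,dh$. Your route is more modular. It avoids the boundary terms at $|h|=A^{-1}$ and $|h|=1$. It also avoids working with the unbounded $\ln|h|$ kernel in the far region, where the paper's sketch needs an extra cancellation that it leaves implicit. The paper's route is more hands-on and needs no singular-integral input.

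\textbf{Two slips in the write-up.}
\begin{enumerate}
\item The opening reduction is stated the wrong way round. The generic bound is $\|\partial^\beta\psi\|_{L^\infty}\lesssim(\log A)A^{|\beta|-2}$, and it improves to $A^{|\beta|-3}$ when $\beta$ contains a $\partial_{x_2}$.
\item The total operator is $\partial^\beta\partial_{x_2}\partial_{x_1}^m\Delta^{-1}$. You should keep $2-m$ (Calder\'on--Zygmund) or $1-m$ (Biot--Savart) of the $j+1$ outer derivatives, not ``all but $m+1$ or $m+2$''. In particular, for $m=2$ all $j+1$ derivatives, including $\partial_{x_2}$, must be moved onto $G_2$. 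This is precisely where the free $\partial_{x_2}$ is needed.
\end{enumerate}
Your later principle, leaving exactly two derivatives against $\Delta^{-1}$, is the correct one. With it the count gives $(\log A)A^{j-2}\|f\|_{C^{j+2}}$ and $(\log A)A^{j-1}\|f\|_{C^{j+1}}$, as claimed.
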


\begin{proof}
Since
\[
\partial_{x_i}\mathbf u[\omega]
=
\mathbf u[\partial_{x_i}\omega],
\]
it suffices to prove the estimates in $L^\infty$. We first consider the second component. Recall that
\[
u_2[\omega](x)
=
\frac{1}{2\pi}
\int_{\mathbb R^2}
\frac{x_1-y_1}{|x-y|^2}\omega(y)\,dy.
\]
Setting $h=y-x$, we obtain
\[
u_2[f\sin(A x_1+\theta_0)](x)
=
-\frac{1}{2\pi}
\int_{\mathbb R^2}
\frac{h_1}{|h|^2}
f(x+h)
\sin(Ax_1+Ah_1+\theta_0)\,dh.
\]

For $|h|\leq A^{-1}$,
\[
\left|
\int_{|h|\leq A^{-1}}
\frac{h_1}{|h|^2}
f(x+h)
\sin(Ax_1+Ah_1+\theta_0)\,dh
\right|
\leq
C\frac{\|f\|_{L^\infty}}{A}.
\]

For $A^{-1}\leq |h|\leq1$, integrating by parts in $h_1$ gives
\begin{align*}
&\left|
\int_{A^{-1}\leq |h|\leq1}
\frac{h_1}{|h|^2}
f(x+h)
\sin(Ax_1+Ah_1+\theta_0)\,dh
\right|\\
&\qquad\leq
\frac{C}{A}
\int_{A^{-1}\leq |h|\leq1}
\left(
\frac{\|f\|_{L^\infty}}{|h|^2}
+
\frac{\|f\|_{C^1}}{|h|}
\right)\,dh\\
&\qquad\leq
C\frac{\log A}{A}\|f\|_{C^1}.
\end{align*}

Finally, for $|h|\geq1$, since $f(x+h)$ is supported in $B_R(0)$, the same integration by parts yields
\begin{align*}
&\left|
\int_{|h|\geq1}
\frac{h_1}{|h|^2}
f(x+h)
\sin(Ax_1+Ah_1+\theta_0)\,dh
\right|\\
&\qquad\leq
C\frac{\|f\|_{C^1}}{A}.
\end{align*}
Hence
\[
\|u_2[f\sin(A x_1+\theta_0)]\|_{L^\infty}
\leq
C\frac{\log A}{A}\|f\|_{C^1}.
\]

For the first component, we have
\begin{align*}
u_1[\omega](x)
& =
-\frac{1}{2\pi}
\int_{\mathbb R^2}
\frac{h_2}{|h|^2}\omega(x+h)\,dh=\frac{1}{2\pi}
\int_{\mathbb R^2}
\ln(|h|)\partial_{h_{2}}\omega(x+h)\,dh\\
&=\frac{1}{2\pi}
\int_{\mathbb R^2}
\ln(|h|)(\partial_{2}f(x+h))\sin(Ax_1+Ah_1+\theta_0)\,dh,
\end{align*}
and, integrating by parts in $h_1$, and using the same region strategy as with $u_{2}$, we obtain
\[
\|u_1[f\sin(A x_1+\theta_0)]\|_{L^\infty}
\leq
C\frac{\log A}{A^2}\|f\|_{C^2}.
\]
This concludes the proof.
\end{proof}


\section{Iterative construction}
We now start the iterative construction of our approximate solution with a strong deformation at the origin.
\begin{lem}\label{inductiongrowth}
    Given $\epsilon_{1},\epsilon_{2},\epsilon_{3},K,T,M>0$, and  $(\rho(x,t), \omega (x,t))$ a symmetric solution to \eqref{eq:2Dbouss-vorticity} for $t\in[0,T]$, such that at the initial time $\rho(x, t=0)=-x_2+\rho_{\rm in}(x)$ as in \eqref{eq:data-stable-pert} (stable initial data) and $\omega(x, 0)=0$, $\rho(x,t)+x_2\in H^4, \omega (x, t) \in H^3\cap \dot{H}^{-1}$,  $\supp(\rho(x,t), \omega(x,t))\cap B_{\delta}(0)=\emptyset$ for some $\delta>0$, and such that
    $$k(t):=\de_{x_{1}}u_1[\omega](x=0,t)<0,\quad  -\int_{0}^{T}k(t)\, d t= M,$$
    $$\|\rho(x,t)+x_2\|_{C^1}, \|\omega(x,t)\|_{C^0}<K,\quad \|\rho(x,t=0)+x_2\|_{H^2}< \epsilon_{1}.$$
    Then, we can find $({\rho}_{\rm pert}(x,t), {\omega}_{\rm pert}(x,t))$ such that, the solution $(\rho_{\rm new}(x,t),  \omega_{\rm new}(x,t))$ to \eqref{eq:2Dbouss-vorticity} with initial conditions $(\rho(x,t=0)+{\rho}_{\rm pert}(x,t=0), {\omega}_{\rm pert}(x,t=0)=0)$ is symmetric, $\rho_{\rm new}(x,t)+x_{2}\in H^4, \omega_{\rm new}(x,t) \in H^3\cap \dot{H}^{-1}$ for $t\in[0,T]$, $\supp(\rho_{\rm new}(x,t), \omega_{\rm new}(x,t))\cap B_{\delta_{\rm new}}(0)=\emptyset$ for some $\delta_{\rm new}>0$,
    $$\supp(\rho(x,0))\cap\supp ({\rho}_{\rm pert}(x, 0))=\emptyset,$$
    and
\begin{equation}\label{growth1}
        k_{\rm new}(t):=\de_{x_{1}}u_1[\omega_{\rm new}(x,t)](x=0)<0,\ {\left|\int_{0}^{t}k_{\rm new}(\tau)\, d \tau-\int_{0}^{t}k(\tau)\, d \tau\right|\leq t^2\epsilon_{3}},
\end{equation}
\begin{align}\label{growth2}
        &\|\rho_{\rm new}(x,t)+x_2\|_{C^1}, \, \|\omega_{\rm new}(x,t)\|_{C^0}<K,\notag\\
        &\|{\rho}_{\rm pert}(x,t=0)\|_{H^2},\|{\omega}_{\rm pert}(x,t=0)\|_{H^1}< \epsilon_{2}, \notag\\
        &\|\rho_{\rm new}(x,t)-\rho(x,t)\|_{C^1} , \|\omega_{\rm new}(x,t)-\omega(x,t)\|_{C^0}<\epsilon_{3}
\end{align}
\begin{equation}\label{growth3}
    \|\rho_{\rm new}(x,t)+x_2\|_{H^2}\geq \frac{\epsilon_{2}}{4}\text{e}^{\int_{0}^{t}-2k(\tau)d\tau}, \quad \|\omega_{\rm new}(x,t)\|_{H^1}\geq t\frac{\epsilon_{2}}{4}\text{e}^{\int_{0}^{t}-k(\tau)d\tau}.
\end{equation}
Finally, we have that
\begin{align}\label{convh2K}
    &\|\rho(x,t)-\rho_{\rm new}(x,t)\|_{C^{2.5}(\R^2\setminus B_{\delta}(0))}, \|\omega(x,t)-\omega_{\rm new}(x,t)\|_{C^{1.5}(\R^2\setminus B_{\delta}(0))}\leq \epsilon_{3}\\
    \label{convL2}
    &\|\omega(x,t)-\omega_{\rm new}(x,t)\|_{L^{2}(\R^2)},\ \|\omega(x,t)-\omega_{\rm new}(x,t)\|_{\dot{H}^{-1}(\R^2)}\leq \epsilon_{3}.
\end{align}

\end{lem}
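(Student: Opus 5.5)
The perturbation will be a highly oscillatory, small-amplitude layer placed in a small ball around the origin, inside the region where the background stratification has been neutralized. It is transported essentially passively by the linear strain $k(t)(x_1,-x_2)$; the resulting compression in $x_1$ produces the growth \eqref{growth3}.

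\textbf{Initial data.} Fix a symmetric cutoff $g\in C^\infty_c$ supported in a small annular sector away from the origin, and set
\[
\rho_{\rm pert}(x,0)=\frac{\epsilon_2}{2A\lambda}\,g(\lambda x)\cos(A\lambda x_1),
\]
with $\omega_{\rm pert}(x,0)=0$. The phase is chosen compatible with the symmetry: $\cos$ is even in $x_1$, and $g$ is even in $x_1$ and odd in $x_2$. Here $\lambda$ is taken so large that $\supp\rho_{\rm pert}(\cdot,0)\subset B_{\delta/2}(0)$, and $A\gg1$. Then:
\begin{itemize}
\item $\|\rho_{\rm pert}(\cdot,0)\|_{H^2}\approx \frac{\epsilon_2}{2}\|g\|_{L^2}A\lambda\cdot\lambda^{-1}\cdot\lambda^{-1}\cdot\lambda$, which is of size $\epsilon_2\|g\|_{L^2}$ up to normalization, so we normalize $g$ to make this $<\epsilon_2$ (and at least $\epsilon_2/2$).
\item $\|\rho_{\rm pert}(\cdot,0)\|_{C^1}\lesssim\epsilon_2$, and $\|\rho_{\rm pert}(\cdot,0)\|_{C^0}\sim \epsilon_2 (A\lambda)^{-1}$.
\end{itemize}

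\textbf{Approximation.} As in Lemma \ref{inductiondeform}, we rescale $y=\lambda x$ and apply Lemma \ref{gluing1}. The exact solution then splits as exterior plus interior. The exterior part is $O(\lambda^{-1})$-close to $(\rho,\omega)$ in $H^3\times(H^2\cap\dot H^{-1})$, which gives \eqref{convh2K}, \eqref{convL2} and the $C^1$/$C^0$ parts of \eqref{growth2} by Sobolev embedding and interpolation. The interior part is close to the solution of \eqref{simp1}. By Lemma \ref{gluing2} with amplitude $a\sim\epsilon_2/A$, this is in turn $O(a^2)$-close to the linear transport problem \eqref{eq:1stsystem}, whose explicit solution \eqref{eq:tilde-rho}--\eqref{eq:tilde-omega} is
\[
\tilde\rho=\rho_{\rm pert}(\cdot,0)\circ\Phi^{-1},\qquad \tilde\omega=-\partial_{x_1}\rho_{\rm pert}(\cdot,0)\circ\Phi^{-1}\int_0^t e^{-\int_0^\tau k}\,d\tau .
\]
Note that $\Phi^{-1}$ stretches $x_1$-frequencies by $e^{-\int_0^t k}$. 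Hence $\|\partial_{x_1}^2\tilde\rho\|_{L^2}\approx\frac{\epsilon_2}{2}e^{-2\int_0^tk}$, up to the $L^2$-norm change, which is absent since the map is measure preserving. Similarly $\|\partial_{x_1}\tilde\omega\|_{L^2}\gtrsim t\,\frac{\epsilon_2}{2}e^{-\int_0^tk}$, using $\int_0^te^{-\int k}\ge t$. The perturbation is supported disjointly from $\rho+x_2$, and its $H^2$ error is $O(\lambda^{-1}+a)$ with constants depending on $M$. Taking $A$ and then $\lambda$ large therefore gives \eqref{growth3} with the factor $\tfrac14$.

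\textbf{Deformation control and $k_{\rm new}$.} We need \eqref{growth1}, and this is the main obstacle: the oscillatory vorticity $\omega_{\rm pert}$ must not significantly change $\partial_{x_1}u_1(0,t)$. Here Lemma \ref{boundvelocity} is the key tool. After rescaling, $\omega_{\rm pert}$ is of the form $h(x,t)\sin(A(\cdot)x_1+\theta)$ with $h$ of size $\epsilon_2 t$, and its $u_1$-contribution in $C^1$ is $\lesssim (\log A)A^{-1}$ times an $M$-dependent constant. (This uses that the phase frequency is $Ae^{-\int k}\ge A$; the amplitude carries a factor $1/A$ but $\partial_{x_1}$ brings a factor $A$, giving $u_1\in C^1$ bounded by $(\log A)A^{-1}$.) Including the nonlinear $O(a^2)$ and gluing $O(\lambda^{-1})$ errors, which are proportional to $t$ via the time-derivative bounds \eqref{est:nonscaled-new}, \eqref{est:scaled-new}, we get
\[
|k_{\rm new}(t)-k(t)|\le C_M\big((\log A)A^{-1}+\lambda^{-1}\big)t .
\]
Integrating gives \eqref{growth1} once $A$ and $\lambda$ are large. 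The same smallness preserves the strict negativity $k_{\rm new}<0$ in two cases: for $t$ bounded away from $0$, by continuity; for $t$ near $0$, because $k(t)\le -ct$, which is guaranteed by Lemma \ref{stabledeformation} for the previous step's structure, and I would assume this quantitative form is inherited. Two further points remain: the bound $\|\omega_{\rm pert}(\cdot,0)\|_{H^1}=0<\epsilon_2$ is trivial, and the support of the new solution stays away from the origin by the exponential flow bound, exactly as in Lemma \ref{inductiondeform}.
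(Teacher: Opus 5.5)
Your architecture is the paper's: an $x_1$-oscillating layer at scale $\lambda^{-1}$ and frequency $\sim A\lambda$, glued in with Lemma~\ref{gluing1}, whose linear transport by $k(t)(x_1,-x_2)$ produces \eqref{growth3}, with Lemma~\ref{boundvelocity} controlling the change in $k$. The gap is the step comparing the interior system \eqref{simp1} with the linear model \eqref{eq:1stsystem}, which does not go through as you state it.

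Lemma~\ref{gluing2} concerns a \emph{fixed} profile: its $C$ and $a_0$ depend on $\rho_{\rm in}$, through high-norm energy estimates with Gr\"onwall factors like $\exp\bigl(C\int_0^t\|\bar\omega\|_{H^3}\bigr)$. For your profile $g(x)\cos(Ax_1)$ (at $\lambda=1$) the $H^4$ norm is $\sim A^4$. The induced vorticity has $H^3$ norm growing like a power of $A$, about $t\,\epsilon_2A^2$ already with the correct normalization. So neither $a\le a_0$ nor the bound $Ca^2$ is uniform as $A\to\infty$. Consequently three things are not established: the $H^2$ error you feed into \eqref{growth3}, the nonlinear part of $k_{\rm new}-k$ in \eqref{growth1}, and even the $[0,T]$ bounds on \eqref{simp1} that Lemma~\ref{gluing1} requires.

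The missing ingredient is the paper's direct bootstrap for $R=\bar\rho_{\rm pert}-\tilde\rho_{\rm pert}$, $W=\bar\omega_{\rm pert}-\tilde\omega_{\rm pert}$ in \eqref{eq:R_W_system}. At low regularity ($W$ in $\dot H^{-1}\cap L^2$, $R$ in $H^1$) the linear coefficients involve only quantities like $\|\tilde\omega_{\rm pert}\|_{C^1}$ and $\|\tilde\rho_{\rm pert}\|_{C^2}$, which are $O_M(1)$ uniformly in the frequency $N$. Meanwhile the self-interaction forcing $\mathbf u[\tilde\omega_{\rm pert}]\cdot\nabla\tilde\omega_{\rm pert}$, $\mathbf u[\tilde\omega_{\rm pert}]\cdot\nabla\tilde\rho_{\rm pert}$ is $O(N^{-3}\log N)$, because Lemma~\ref{boundvelocity} makes the velocity of an $x_1$-oscillating vorticity small. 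Interpolation then closes a higher-norm bootstrap with $\|R\|_{H^3}+\|W\|_{H^2}\lesssim tN^{-1}\log N$. So the oscillation gain of Lemma~\ref{boundvelocity} is needed to neutralize the self-interaction, not only to control $k_{\rm new}$.

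There is also a scaling slip in the data. With amplitude $\epsilon_2/(2A\lambda)$ and $g=O(1)$, the product of factors you wrote equals $A$. So $\|\rho_{\rm pert}(\cdot,0)\|_{H^2}\approx\frac{\epsilon_2}{2}A\|g\|_{L^2}$, while $\|\rho_{\rm pert}(\cdot,0)\|_{C^1}\sim\epsilon_2$. The latter is fatal for \eqref{growth2}. Since $\rho_{\rm new}-\rho=(\tilde\rho_{\rm ext}-\rho)+\tilde\rho_{\rm int}$, the bounds $\|\rho_{\rm new}-\rho\|_{C^1}<\epsilon_3$ and $\|\omega_{\rm new}-\omega\|_{C^0}<\epsilon_3$ require the \emph{interior} part to be small in these norms. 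Here $\epsilon_3$ may be much smaller than $\epsilon_2$, and the exterior estimate you invoke does not address this.

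The right amplitude is $\sim\epsilon_2/(A^2\lambda)$, the paper's $1/(\lambda N^2)$. Then $\|\tilde\rho_{\rm int}\|_{\dot H^2}$ and $\|\tilde\omega_{\rm int}\|_{\dot H^1}$ have exactly the sizes required in \eqref{growth3}, whereas $\|\tilde\rho_{\rm int}\|_{C^1}$ and $\|\tilde\omega_{\rm int}\|_{L^\infty}$ are $O_M(\epsilon_2/A)$. This one-power-of-$A$ separation between the $H^2$ and $C^1$ scalings is what makes \eqref{growth2} and \eqref{growth3} compatible. If your ``normalize $g$'' means taking $g$ of size $A^{-1}$, you recover it, but then your $C^1$ bullet is wrong.

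Finally, your argument for $k_{\rm new}<0$ near $t=0$ relies on $k(t)\le-ct$, which is not among the hypotheses. The paper's proof also only provides $|k_{\rm new}-k|\lesssim t(\log N)N^{-1}$ at this point, so such a lower bound would have to be carried along as an additional inductive hypothesis.
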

\begin{Rmk}
    We choose to prove the convergence in $C^{2.5}$ (resp. $C^{1.5}$) for \eqref{convh2K}, but this could be done in any $C^{2,\alpha}$ (resp. $C^{1,\alpha}$) with $\alpha\in(0,1)$.
\end{Rmk}

\begin{proof}
    Consider 
    \begin{equation}\label{eq:data_lambda}
        {\rho}_{\rm pert}(x,t=0)=f(\lambda x)\frac{\sin(\lambda Nx_1)}{\lambda N^2},
    \end{equation}
    with $f(x)$ a fixed, smooth, compactly supported function such that $f(x_{1},x_{2})=-f(-x_{1},x_{2})=-f(x_{1},-x_{2})=f(-x_{1},-x_{2})$, and $\text{supp}f\subset B_{1}(0)\setminus B_{\frac{1}{10}}(0)$, and such that 
    $$\|f\|_{L^2}=\frac{\epsilon_{2}}{2}.$$
    We want to show that, for $\lambda$ and $N$ large enough, the solution $\rho_{\rm new}(x,t)$ with initial conditions $\rho(x,0)+{\rho}_{\rm pert}(x)$, $\omega(x,0)=0$ has the desired properties.
    First, by applying Lemma \ref{gluing1}, we now show that a good approximation for our solution is given by
    $$\rho(x,t)+\bar{\rho}_{\rm pert}(x,t), \quad \omega(x,t)+\bar{\omega}_{\rm pert}(x, t)$$
    with
    \begin{align*}
    \begin{cases}
    \partial_{t}\bar{\rho}_{\rm pert}+\mathbf{u}[\bar{\omega}_{\rm pert}]\cdot \nabla \bar{\rho}_{\rm pert}+k(t)(x_{1},-x_{2})\cdot\nabla \bar{\rho}_{\rm pert}=0,\\
        \de_t \bar {\omega}_{\rm pert}+\mathbf{u}[\bar{\omega}_{\rm pert}]\cdot \nabla \bar{\omega}_{\rm pert} + (k(t)(x_{1},-x_{2})) \cdot \nabla \bar {\omega}_{\rm pert}(x,t)=-\de_{x_1} \bar{\rho}_{\rm pert}(x,t),\\
        \bar{\rho}_{\rm pert}(x,0)=f(\lambda x)\frac{\sin(\lambda Nx_1)}{\lambda N^2}, \; \bar {\omega}_{\rm pert}(x,0)=0.
    \end{cases}
    \end{align*}
    We study $\bar{\omega}_{\rm pert}$ and $\bar{\rho}_{\rm pert}$. To that end, we introduce a simpler model

    \begin{align*}
    \begin{cases}
    \partial_{t}\tilde{\rho}_{\rm pert}+k(t)(x_{1},-x_{2})\cdot\nabla \tilde{\rho}_{\rm pert}=0,\\
        \de_t \tilde {\omega}_{\rm pert}+ (k(t)(x_{1},-x_{2})) \cdot \nabla \tilde {\omega}_{\rm pert}(x,t)=-\de_{x_1} \tilde{\rho}_{\rm pert}(x,t),\\
        \tilde{\rho}_{\rm pert}(x,0)=f(\lambda x)\frac{\sin(\lambda Nx_1)}{\lambda N^2}, \; \tilde {\omega}_{\rm pert}(x,0)=0.
    \end{cases}
    \end{align*}

    Note that, because of the natural scaling of the equations, we can actually assume without loss of regularity $\lambda=1$, since all the initial conditions produce the same solutions up to a re-scaling in $\lambda$.

    We will start by showing that these two different approximations are very similar to each other. In general, the constants depend on $M$, but since this dependence will not be relevant for our purposes, we will not make it explicit. First, we obtain some useful bounds for $\tilde{\rho}_{\rm pert}$ and $\tilde{\omega}_{\rm pert}$. We have
    $$\tilde{\rho}_{\rm pert}(x,t)=\frac{f({\rm e}^{\int_{0}^{t}-k(s)ds}x_1,{\rm e}^{\int_{0}^{t}k(s)ds}x_2)\sin(N{\rm e}^{\int_{0}^{t}-k(s)ds}x_{1})}{N^2}$$
so in particular, from the previous lemma, 
$$\|\tilde{\rho}_{\rm pert}(x,t)\|_{C^{j}}= \|\de_{x_{1}}^{j}\tilde{\rho}_{\rm pert}(x,t)\|_{L^{\infty}}+C_{M,j}O(N^{j-3})= N^{j-2}e^{j\int_{0}^{t}-k(\tau)d\tau}C\epsilon_{1}+C_{M,j}O(N^{j-3}),$$
$$\|\tilde{\rho}_{\rm pert}(x,t)\|_{H^{j}}= \|\de_{x_{1}}^{j}\tilde{\rho}_{\rm pert}(x,t)\|_{L^2}+C_{M,j}O(N^{j-3})= N^{j-2}e^{j\int_{0}^{t}-k(\tau)d\tau}\frac{\epsilon_{1}}{2}+C_{M,j}O(N^{j-3}).$$

Similarly, for $\tilde{\omega}_{\rm pert}(x,t)$ we have
\begin{align}\label{eq:tildeomegapert}
    \tilde{\omega}_{\rm pert}(x,t)&=-\int_{0}^{t}{\rm e}^{\int_{0}^{s}-k(\tau)d\tau}ds\notag\\
    &\quad \times \Big[\frac{(\de_{x_{1}}f)({\rm e}^{\int_{0}^{t}-k(s)ds}x_{1},{\rm e}^{\int_{0}^{t}k(s)ds}x_{2})\sin(N{\rm e}^{\int_{0}^{t}-k(s)ds}x_{1})}{N^2}\notag\\
    &\qquad +\frac{f({\rm e}^{\int_{0}^{t}-k(s)ds}x_{1},{\rm e}^{\int_{0}^{t}k(s)ds}x_{2})\cos(N{\rm e}^{\int_{0}^{t}-k(s)ds}x_{1})}{N}\Big].
\end{align}
so
\begin{align}\label{eq:esttildeomegapert}
    \|\tilde{\omega}_{\rm pert}(x,t)\|_{C^{j}}&= \|\de_{x_{1}}^{j}\tilde{\omega}_{\rm pert}(x,t)\|_{L^{\infty}}+C_{M,j}O(N^{j-2})\notag\\
    &= N^{j-1}e^{j\int_{0}^{t}-k(\tau)d\tau}C\epsilon_{1}\int_{0}^{t}{\rm e}^{\int_{0}^{s}-k(\tau)d\tau}ds+C_{M,j}O(N^{j-2}),
\end{align}
\begin{align*}
    \|\tilde{\omega}_{\rm pert}(x,t)\|_{H^{j}}&= \|\de_{x_{1}}^{j}\tilde{\omega}_{\rm pert}(x,t)\|_{L^2}+C_{M,j}O(N^{j-2})\\
    &= N^{j-1}e^{j\int_{0}^{t}-k(\tau)d\tau}\frac{\epsilon_{1}}{2}\int_{0}^{t}{\rm e}^{\int_{0}^{s}-k(\tau)d\tau}ds+C_{M,j}O(N^{j-2}),
\end{align*}
where
$$t\leq \int_{0}^{t}{\rm e}^{\int_{0}^{s}-k(\tau)d\tau}\leq te^M.$$
We point out that all these bounds hold with $\lambda=1$, but we can use the natural scaling of the norms with $\lambda$ to obtain the bounds for general $\lambda$, and in particular the critical norms ($H^2$ and $C^1$ for the density and $L^{\infty}$ $H^1$ for the vorticity) obey the exact same bounds.

    Now, defining $\tilde{\rho}_{\rm pert}+R=\bar{\rho}_{\rm pert}$, $\tilde{\omega}_{\rm pert}+W=\bar{\omega}_{\rm pert}$, consider the system of linearized evolution equations for the perturbation variables $(R, W)$:
\begin{align}\label{eq:R_W_system}
    \partial_{t}R + \mathbf{u}[W + \tilde{\omega}_{\rm pert}] \cdot \nabla (R + \tilde{\rho}_{\rm pert}) + k(t)(x_{1}, -x_{2}) \cdot \nabla R &= 0, \notag \\
    \partial_t W + \mathbf{u}[W + \tilde{\omega}_{\rm pert}] \cdot \nabla (W + \tilde{\omega}_{\rm pert}) + k(t)(x_{1}, -x_{2}) \cdot \nabla W &= -\partial_{x_1} R,
\end{align}
with initial data $W(x, 0) = R(x, 0) = 0$.

We use system \eqref{eq:R_W_system} to establish precise energy estimates for $\|W\|_{\dot H^{-1}}$, $\|W\|_{L^2}$, and $\|R\|_{H^1}$. Since $M$ is a fixed constant throughout, we omit its dependence in the constants below. Taking the respective energy estimates and properly estimating the transport, commutator, and source terms, we obtain:
\begin{align*}
    \frac{d}{dt}\|W\|_{\dot{H}^{-1}} &\lesssim \left( |k(t)| + \|\tilde{\omega}_{\rm pert}\|_{C^1} + \|W\|_{H^1} \right) \|W\|_{\dot{H}^{-1}} + \|R\|_{L^2} \\
    &\quad + \|\mathbf{u}[\tilde{\omega}_{\rm pert}] \cdot \nabla \tilde{\omega}_{\rm pert}\|_{\dot{H}^{-1}}, \\
    \frac{d}{dt}\|W\|_{L^2} &\lesssim \left( |k(t)| + \|\tilde{\omega}_{\rm pert}\|_{C^1}\right) \|W\|_{L^2} + \|R\|_{H^1} \\
    &\quad + \|\mathbf{u}[\tilde{\omega}_{\rm pert}] \cdot \nabla \tilde{\omega}_{\rm pert}\|_{L^2}, \\
    \frac{d}{dt}\|R\|_{H^1} &\lesssim \left( |k(t)| + \|\tilde{\omega}_{\rm pert}\|_{C^2} + \|W\|_{H^2} \right) \|R\|_{H^1} + \|\tilde{\rho}_{\rm pert}\|_{C^2} \|W\|_{L^2} \\
    &\quad + \|\mathbf{u}[\tilde{\omega}_{\rm pert}] \cdot \nabla \tilde{\rho}_{\rm pert}\|_{H^1}.
\end{align*}

Assuming we work within a time interval where the bootstrap regime holds, namely
\[
\|W\|_{\dot{H}^{-1}} + \|W\|_{H^{2}} + \|R\|_{H^3} \le 1,
\]
and applying Lemma \ref{boundvelocity} together with the profiles of $\tilde{\omega}_{\rm pert}$ and $\tilde{\rho}_{\rm pert}$, the forcing terms satisfy
\[
\|\mathbf{u}[\tilde{\omega}_{\rm pert}] \cdot \nabla \tilde{\omega}_{\rm pert}\|_{\dot{H}^{-1}} + \|\mathbf{u}[\tilde{\omega}_{\rm pert}] \cdot \nabla \tilde{\omega}_{\rm pert}\|_{L^2} + \|\mathbf{u}[\tilde{\omega}_{\rm pert}] \cdot \nabla \tilde{\rho}_{\rm pert}\|_{H^1} \le C N^{-3}\log(N).
\]
Defining $A(t) := \|W(t)\|_{\dot{H}^{-1}} + \|W(t)\|_{L^2} + \|R(t)\|_{H^1}$, the combination of the above bounds leads to the differential inequality
\[
\partial_{t} A(t) \le \left( C + |k(t)| \right) A(t) + C N^{-3}\log(N).
\]
Integrating this inequality with initial condition $A(0) = 0$ yields
\[
A(t) \le C t N^{-3}\log(N),
\]
for all times $t \ge 0$ within the bootstrap domain.

Next, we need to show, by a bootstrapping argument, that  
    $$\|W\|_{\dot{H}^{-1}\cap H^{2}}+\|R\|_{H^3}< 1$$
    during the times considered for $N$ large. Since $\|W\|_{\dot{H}^{-1}}$ is already small, we only need to bound the higher order norms. We have
    \begin{align*}
        \frac{d}{dt}\|W\|_{H^{2}}\lesssim &\|W\|_{H^{2}}^2+\|W\|_{H^{2}}|k(t)|+\|R\|_{H^3}+\sum_{j=0}^{2}\|W\|_{H^{-1+j}}\|\tilde{\omega}_{\rm pert}\|_{C^{3-j}}+\sum_{j=1}^{2}\|W\|_{H^{j}}\|\mathbf{u}[\tilde{\omega}_{\rm pert}]\|_{C^{2-j}}\\
        &+\|\mathbf{u}[\tilde{\omega}_{\rm pert}]\cdot\nabla\tilde{\omega}_{\rm pert}\|_{H^2},\\ \frac{d}{dt}\|R\|_{H^{3}}\lesssim &\|W\|_{H^{2}}\|R\|_{H^{3}}+\|R\|_{H^{3}}|k(t)|+\sum_{j=0}^{3}\|W\|_{H^{-1+j}}\|\tilde{\rho}_{\rm pert}\|_{C^{3-j}}+\sum_{j=1}^{3}\|R\|_{H^{j}}\|\mathbf{u}[\tilde{\omega}_{\rm pert}]\|_{C^{3-j}}\\
        &+\|\mathbf{u}[\tilde{\omega}_{\rm pert}]\cdot\nabla\tilde{\rho}_{\rm pert}\|_{H^3}.
    \end{align*}

{To close the estimate, using interpolation, we have for $j \in \{1, 2, 3\}$:
$$\|W\|_{H^{-1+j}} \lesssim N^{\frac{-3(3-j)}{2}}, \quad \|R\|_{H^{j}}\leq CN^{\frac{-3(3-j)}{2}}$$
which combined with
$$\|\mathbf{u}[\tilde{\omega}_{\rm pert}]\|_{C^{i}}\lesssim \log(N)N^{i-2}, \|\tilde{\omega}_{\rm pert}\|_{C^{i}}\stackrel{\eqref{eq:tildeomegapert}, \eqref{eq:esttildeomegapert}}{\lesssim} N^{i-1}$$
and estimate}

{\begin{align}
    \|u_1[\tilde{\omega}_{\rm pert}]\partial_{x_1}\tilde{\omega}_{\rm pert}\|_{H^2} & \lesssim \|u_1[\tilde{\omega}_{\rm pert}]\|_{L^\infty} \|\partial_{x_1}\tilde{\omega}_{\rm pert}\|_{H^2}+\|u_1[\tilde{\omega}_{\rm pert}]\|_{C^2} \|\partial_{x_1}\tilde{\omega}_{\rm pert}\|_{L^2}\\
    & \lesssim (\log N) N^{-2} \underbrace{\|\tilde{\omega}_{\rm pert}\|_{L^\infty}}_{\simeq N^{-1} \, \text{by} \, \eqref{eq:tildeomegapert}} \times N^{2} \lesssim N^{-1} (\log N),
\end{align}
and, analogously for $u_2[\tilde\omega_{\rm pert}]$, yielding
\begin{equation}
     \|\mathbf{u}[\tilde{\omega}_{\rm pert}]\cdot\nabla\tilde{\omega}_{\rm pert}\|_{H^2} \lesssim N^{-1}(\log N).
\end{equation}}
Similarly
$$\|\mathbf{u}[\tilde{\omega}_{\rm pert}]\cdot\nabla\tilde{\rho}_{\rm pert}\|_{H^3}\lesssim N^{-1}(\log N).$$
Integrating in time
$$\|R\|_{H^{3}}, \, \|W\|_{H^2}\lesssim tN^{-1}(\log N),$$
which closes the bootstrapping argument after taking $N$ large.

We can then combine this with Lemma \ref{gluing1}. 
{More precisely, appealing to Lemma \ref{gluing1}, we can find $(\bar{\rho}_{\rm in}, \bar{\omega}_{\rm in})$ fulfilling \eqref{simp1} with data \eqref{eq:data_lambda}, $(\tilde{\rho}_{\rm ext}, \tilde{\omega}_{\rm ext})$ fulfilling \eqref{eq:sum1} and $(\tilde{\rho}_{\rm in}, \tilde{\omega}_{\rm in})$ fulfilling \eqref{eq:simp2} with data \eqref{eq:data_lambda} such that,}
choosing $\lambda$ sufficiently large depending on $N$ and using Lemma \ref{gluing2}, we obtain
\begin{equation}
    \left\|\tilde{\omega}_{\rm ext}-\omega\right\|_{H^{2}},
    \left\|\tilde{\rho}_{\rm ext}-\rho\right\|_{H^{3}}
    \leq C_{N}t\lambda^{-\frac{1}{2}}
    \leq Ct(\log N)N^{-1}.
\end{equation}
Moreover, we have
\begin{equation}
    \left\|\tilde{\omega}_{\rm pert}\left(\frac{x}{\lambda}\right)
    -\tilde{\omega}_{\rm int}\left(\frac{x}{\lambda}\right)\right\|_{H^{2}},
    \left\|\lambda\tilde{\rho}_{\rm pert}\left(\frac{x}{\lambda}\right)
    -\lambda\tilde{\rho}_{\rm int}\left(\frac{x}{\lambda}\right)\right\|_{H^{3}}
    \leq C_{N}t\lambda^{-\frac{1}{2}}+Ct(\log N)N^{-1}
    \leq Ct(\log N)N^{-1}.
\end{equation}
 where $\tilde{\rho}_{\rm int}+\tilde{\rho}_{\mathrm{ext}}=\rho_{\rm new}, \tilde{\omega}_{\rm int}+\tilde{\omega}_{\mathrm{ext}}=\omega_{\rm new}$ is the solution to the Boussinesq equations with initial conditions
$$\omega_{\rm new}(x,t=0)=0,\quad \rho_{\rm new}(x,t=0)=\rho(x,t=0)+f(\lambda x)\frac{\sin(\lambda N x_{1})}{\lambda N^2}$$
 using the decomposition given by Lemma \ref{gluing1}. Note that, by continuation of classical solutions, this immediately implies that $\omega_{\rm new}\in H^{3}$, $\rho_{\rm new}\in H^{4}$.
We also have
\begin{align*}
\left\|\tilde{\omega}_{\rm int}(x,t)-\tilde{\omega}_{\rm pert}(x,t)\right\|_{L^{\infty}}
&=
\left\|\tilde{\omega}_{\rm int}\left(\frac{x}{\lambda},t\right)
-\tilde{\omega}_{\rm pert}\left(\frac{x}{\lambda},t\right)\right\|_{L^{\infty}}\\
&\leq
\left\|\tilde{\omega}_{\rm int}\left(\frac{x}{\lambda},t\right)
-\tilde{\omega}_{\rm pert}\left(\frac{x}{\lambda},t\right)\right\|_{H^2}
\leq Ct(\log N)N^{-1},
\end{align*}
\begin{align*}
\left\|\tilde{\omega}_{\rm int}(x,t)-\tilde{\omega}_{\rm pert}(x,t)\right\|_{H^1}
&\leq
\left\|\tilde{\omega}_{\rm int}\left(\frac{x}{\lambda},t\right)
-\tilde{\omega}_{\rm pert}\left(\frac{x}{\lambda},t\right)\right\|_{H^1}\\
&\leq
\left\|\tilde{\omega}_{\rm int}\left(\frac{x}{\lambda},t\right)
-\tilde{\omega}_{\rm pert}\left(\frac{x}{\lambda},t\right)\right\|_{H^2}
\leq Ct(\log N)N^{-1},
\end{align*}
\begin{align*}
\left\|\tilde{\rho}_{\rm int}(x,t)-\tilde{\rho}_{\rm pert}(x,t)\right\|_{C^1}
&\leq
\left\|\lambda\tilde{\rho}_{\rm int}\left(\frac{x}{\lambda},t\right)
-\lambda\tilde{\rho}_{\rm pert}\left(\frac{x}{\lambda},t\right)\right\|_{C^1}\\
&\leq
\left\|\lambda\tilde{\rho}_{\rm int}\left(\frac{x}{\lambda},t\right)
-\lambda\tilde{\rho}_{\rm pert}\left(\frac{x}{\lambda},t\right)\right\|_{H^3}
\leq Ct(\log N)N^{-1},
\end{align*}
\begin{align*}
\left\|\tilde{\rho}_{\rm int}(x,t)-\tilde{\rho}_{\rm pert}(x,t)\right\|_{H^2}
&\leq
\left\|\lambda\tilde{\rho}_{\rm int}\left(\frac{x}{\lambda},t\right)
-\lambda\tilde{\rho}_{\rm pert}\left(\frac{x}{\lambda},t\right)\right\|_{H^2}\\
&\leq
\left\|\lambda\tilde{\rho}_{\rm int}\left(\frac{x}{\lambda},t\right)
-\lambda\tilde{\rho}_{\rm pert}\left(\frac{x}{\lambda},t\right)\right\|_{H^3}
\leq Ct(\log N)N^{-1}.
\end{align*}

Finally,
\begin{align*}
\left\|\mathbf{u}\left[\omega_{\rm new}(x,t)-\omega(x,t)\right]\right\|_{C^1}
&\leq
\left\|\mathbf{u}\left[\tilde{\omega}_{\rm ext}(x,t)-\omega(x,t)\right]\right\|_{C^1}\\
&\quad+
\left\|\mathbf{u}\left[\tilde{\omega}_{\rm int}(x,t)
-\tilde{\omega}_{\rm pert}(x,t)\right]\right\|_{C^1}
+
\left\|\mathbf{u}\left[\tilde{\omega}_{\rm pert}(x,t)\right]\right\|_{C^1}\\
&\leq Ct(\log N)N^{-1}.
\end{align*}

In particular, this shows \eqref{growth1} (after integrating in time and taking $N$ big) and gives us separation of the support of $\tilde{\omega}_{\mathrm{ext}},\omega$ with $\tilde{\omega}_{\rm int}$ and ${\omega}_{\rm pert}$, as well as $\tilde{\rho}_{\mathrm{ext}},\rho$ with $\tilde{\rho}_{\rm int}$ and ${\rho}_{\rm pert}$, and also shows that there exists $\delta_{\rm new}$ such that $\text{supp}(\rho_{\rm new})\cap B_{\delta_{\rm new}}=\emptyset$, $\text{supp}(\omega_{\rm new})\cap B_{\delta_{\rm new}}=\emptyset$. Using the separation of support of $\tilde{\rho}_{\rm int}(x,t)$ and $\tilde{\rho}_{\mathrm{ext}}(x,t)$ (resp. $\tilde{\omega}_{\rm int}(x,t)$ and $\tilde{\omega}_{\mathrm{ext}}(x,t)$) we get
$$\|\omega_{\rm new}\|_{L^{\infty}}=\text{max}(\|\tilde{\omega}_{\mathrm{ext}}(x,t)\|_{L^{\infty}},\|\tilde{\omega}_{\rm int}(x,t)\|_{L^{\infty}})\leq \text{max}(\|\omega\|_{L^{\infty}}+C_{M}O((\log N)N^{-1}),C_{M}O((\log N)N^{-1}))<K,$$
 $$\|\rho_{\rm new}\|_{C^1}=\text{max}(\|\tilde{\rho}_{\mathrm{ext}}(x,t)\|_{C^1},\|\tilde{\rho}_{\rm int}(x,t)\|_{C^1})\leq \text{max}(\|\rho\|_{C^1}+C_{M}O((\log N)N^{-1}),C_{M}O((\log N)N^{-1}))<K.$$   

 Next, for the bounds initially, we have
 $$\|{\omega}_{\rm pert}(x,0)\|_{H^1}=0,\|{\rho}_{\rm pert}\|_{H^2}=\frac{\epsilon_{2}}{2}+CO((\log N)N^{-1})$$
 so that taking $N$ big gives the desired bounds.
 Next, we have 
 $$\|\rho_{\rm new}(x,t)-\rho(x,t)\|_{C^1}\leq \|\rho-\tilde{\rho}_{\mathrm{ext}}\|_{C^1}+\|\tilde{\rho}_{\rm int}\|_{C^1}\leq C_{M}O((\log N)N^{-1})$$
 $$\|\omega_{\rm new}(x,t)-\omega(x,t)\|_{L^{\infty}}\leq \|\omega-\tilde{\omega}_{\mathrm{ext}}\|_{C^1}+\|\tilde{\omega}_{\rm int}\|_{C^1}\leq C_{M}O((\log N)N^{-1})$$
 and thus taking $N$ big gives \eqref{growth2}.
 For \eqref{growth3}, we use that
 \begin{align*}
    \|\rho_{\rm new}(x,t)+x_{2}\|_{H^2}&>\|\tilde{\rho}_{\rm int}\|_{\dot{H}^2}\geq \|\tilde{\rho}_{\rm pert}\|_{H^2}-C_{M}O((\log N)N^{-1})=e^{2\int_{0}^{t}-k(\tau)d\tau}\frac{\epsilon_{2}}{2}-C_{M}O((\log N)N^{-1}),\\
    \|\omega_{\rm new}(x,t)\|_{H^1}&>\|\tilde{\omega}_{\rm int}\|_{H^1}\geq \|\tilde{\omega}_{\rm pert}\|_{H^1}-C_{M}O((\log N)N^{-1})=t{\rm e}^{\int_{0}^{t}-k(\tau)d\tau}\frac{\epsilon_{2}}{2}+C_{M}O((\log N)N^{-1}),
 \end{align*}
 and, taking $N$ large, gives \eqref{growth3}. For \eqref{convh2K}, since $\text{supp}(\tilde{\omega})\cap B_{\delta}=\emptyset$, $\text{supp}(\tilde{\rho})\cap B_{\delta}=\emptyset$ for $\lambda$ large enough

\begin{align*}
    \|\rho(x,t)-\rho_{\rm new}(x,t)\|_{C^{2.5}(\R^2\setminus B_{\delta}(0))}&=\|\rho(x,t)-\tilde{\rho}_{\mathrm{ext}}(x,t)\|_{C^{2.5}(\R^2\setminus B_{\delta}(0))}\\
    &\leq \|\rho(x,t)-\tilde{\rho}_{\mathrm{ext}}(x,t)\|_{H^{3.5}}\leq C\sqrt{\log N}N^{-\frac{1}{2}}, \\
    \|\omega(x,t)-\omega_{\rm new}(x,t)\|_{C^{1.5}(\R^2\setminus B_{\delta}(0))}&\leq \|\tilde{\omega}_{\mathrm{ext}}(x,t)-\omega_{\rm new}(x,t)\|_{C^{1.5}(\R^2\setminus B_{\delta}(0))}\\
    &\leq \|\tilde{\omega}_{\mathrm{ext}}(x,t)-\omega_{\rm new}(x,t)\|_{H^{2.5}}\leq C\sqrt{\log N}N^{-\frac{1}{2}}
\end{align*}
yielding \eqref{convh2K} for large $N$. 
Finally, for \eqref{convL2}, we just have

 $$\ \|\omega(x,t)-\omega_{\rm new}(x,t)\|_{\dot{H}^{-1}(\R^2)}\leq \|\tilde{\omega}_{\mathrm{ext}}(x,t)-\omega(x,t)\|_{\dot{H}^{-1}(\R^2)}+\|\tilde{\omega}_{\rm int}(x,t)-\tilde{\omega}_{\rm pert}(x,t)\|_{\dot{H}^{-1}(\R^2)}\leq C(\log N)N^{-1}$$
 and taking $N$ big finishes the proof.

\end{proof}

\subsection{Proof of Theorem \ref{thm:main}}
We will construct our solution using an iterative procedure 
\begin{equation}
    \rho(x,t) = \lim_{n \to \infty} \rho_{n}(x,t), \quad \omega(x,t) = \lim_{n \to \infty} \omega_{n}(x,t).
\end{equation}
We start by choosing the initial approximations $\rho_{0}(x,t)$ and $\omega_{0}(x,t)$. For a given value of $\epsilon > 0$, we apply Lemma~\ref{lem:stable-exact-sol} with parameter $\epsilon_{0} = \frac{\epsilon}{4}$.

Note that $\rho_{0}(x,t) + x_{2} \in H^4(\mathbb{R}^2)$ and $\omega_{0}(x,t) \in H^3(\mathbb{R}^2) \cap \dot{H}^{-1}(\mathbb{R}^2)$ for all $t \in [0, T_{\epsilon}]$. Furthermore, $\partial_{t} \partial_{x_1} u_{1}[\omega_{0}](0,t) < 0$ for $t \in (0, T_{\epsilon}]$, $\|\rho_{0}(x,t) + x_{2}\|_{C^1} < K_{\epsilon}$ for some constant $K_{\epsilon} > 2$, and both $\rho_{0}(x,t)$ and $\omega_{0}(x,t)$ are symmetric and supported away from the origin.
We define
\begin{align*}
    M_{0}(t) &:= -\int_{0}^{t} \partial_{x_1} u_{1}[\omega_{0}](0, \tau) \, d\tau, \\
    \delta_{0} &:= \sup\Big\{ \delta > 0 : \big(\operatorname{supp}(\rho_{0}(\cdot,t)) \cup \operatorname{supp}(\omega_{0}(\cdot,t))\big) \cap B_{\delta}(0) = \emptyset \Big\}.
\end{align*}

We now proceed to construct our solution inductively. We first note that, throughout the construction steps, both pairs $(\rho_{n}, \omega_{n})$ and $(\tilde{\rho}_{n}, \tilde{\omega}_{n})$ (the latter to be specified below) are spatially symmetric and satisfy:
\begin{align*}
    \|\rho_{n} + x_{2}\|_{C^1}, \; \|\tilde{\rho}_{n} + x_{2}\|_{C^1}, \; \|\omega_{n}(t)\|_{L^{\infty}}, \; \|\tilde{\omega}_{n}(t)\|_{L^{\infty}} & < K_{\epsilon}, \\
    \partial_{x_{1}} u_{1}[\omega_{n}] < 0, \quad \partial_{x_{1}} u_{1}[\tilde{\omega}_{n}] < 0 \quad \text{for } t \in (0, T_{\epsilon}], \quad \text{and} \quad \omega_n(x,0) = \tilde{\omega}_n(x,0) & = 0.
\end{align*}
Since we restrict our analysis to the time interval $t \in [0, T_{\epsilon}]$, we will omit explicitly stating this time constraint in what follows.

Given $(\rho_{n}, \omega_{n})$, we set
\begin{align*}
    M_{n}(t) &:= -\int_{0}^{t} \partial_{x_1} u_{1}[\omega_{n}](0, \tau) \, d\tau, \\
    \delta_{n} &:= \sup\Big\{ \delta > 0 : \operatorname{supp}(\rho_{n}(\cdot,t)) \cap B_{\delta}(0) = \emptyset \Big\}.
\end{align*}
We construct $\rho_{n+1}(x,0)$ as follows: we first apply Lemma~\ref{inductiondeform} with parameters $\epsilon_{1} = \|\rho_{n}\|_{H^2}$, $\epsilon_{2} = \frac{\nu}{n+1}$ (where $\nu > 0$ is a small parameter to be fixed later), $\epsilon_{3} = 4^{-n}$, $T = T_{\epsilon}$, $K = K_{\epsilon}$, and $M = M_{n}$ to obtain the modified profile $\tilde{\rho}_{n}$, which satisfies:
\begin{align}
    \widetilde{k}_{n}(t) &:= \partial_{x_{1}} u_1[\tilde{\omega}_{n}](0,t) < 0, \\
    \widetilde{M}_{n}(t) &:= -\int_{0}^{t} \partial_{x_1} u_{1}[\tilde{\omega}_{n}](0,\tau) \, d\tau, \notag\\
    \frac{C\nu}{n+1} & \ge \widetilde{M}_{n}(T_{\epsilon}) - M_{n}(T_{\epsilon}) \ge \frac{\nu T_{\epsilon}^2}{n+1} c_{M_{n}(T_{\epsilon})},\notag \\
    \widetilde{M}_{n}(t) - M_{n}(t) &\ge \frac{t^2}{2T^2} e^{-M_{n}(t)} \big(\widetilde{M}_{n}(T_{\epsilon}) - M_{n}(T_{\epsilon})\big), \notag\\
    \|\tilde{\rho}_{n}(x,t) + x_2\|_{C^1} &< K_{\epsilon}, \notag\\
    \|\tilde{\rho}_{n}(x,t) - \rho_{n}(x,t)\|_{C^1} &\le \frac{\nu}{n+1},\label{eq:diffnplusone} \\
    \|\tilde{\rho}_{n}(x,t) - \rho_{n}(x,t)\|_{C^{2.5}(\mathbb{R}^2 \setminus B_{\delta_{n}})} &\le \frac{1}{4^n}, \notag\\
    \|\tilde{\omega}_{n}(x,t)\|_{L^{\infty}} &< K_{\epsilon}, \quad \|\tilde{\omega}_{n}(x,t) - \omega_{n}(x,t)\|_{L^{\infty}} \le \frac{\nu}{n+1}, \notag\\
    \|\tilde{\omega}_{n}(x,t) - \omega_{n}(x,t)\|_{C^{1.5}(\mathbb{R}^2 \setminus B_{\delta_{n}})} &\le \frac{1}{4^n},\notag
\end{align}
along with the updated core separation radius and initial perturbation bounds:
\begin{align*}
    &\tilde{\delta}_{n}:= \sup\Big\{ \delta > 0 : \big(\operatorname{supp}(\tilde{\rho}_{n}(\cdot,t)) \cup \operatorname{supp}(\tilde{\omega}_{n}(\cdot,t))\big) \cap B_{\delta}(0) = \emptyset \Big\}, \\
    &\|\tilde{\rho}_{n}(x,0) - \rho_{n}(x,0)\|_{H^2} \le \frac{\nu}{n+1}.
\end{align*}
    with $C_{M_{n}(T_{\epsilon})}$ the constant given by Lemma \ref{inductiondeform}. As we will see later, this choice of $\epsilon_2$ will allow us to obtain infinite deformation after taking the limit $n\rightarrow \infty$ (using, roughly speaking, that the harmonic series diverges) while still having a starting norm small in $H^2$ (since $\sum_{n=0}^{\infty}\frac{\nu^2}{(n+1)^2}$ can be made as small as wanted by taking $\nu$ small). If we only used this step in our construction, we would obtain dynamics with infinite deformation around the origin, but we would not necessarily obtain loss of regularity. To circumvent this issue, we will also add a special kind of layer in our construction, whose purpose is to grow when deformed by this very strong hyperbolic flow around the origin. Unfortunately, adding these kind of perturbations on each step, we might not be able to ensure that the initial conditions are small in $H^2$, so we will only add this special kind of perturbation once the total deformation we have managed to accumulate around the origin is big enough, say, whenever the total deformation $e^{M_{n}(t)}$ crosses the barrier $4^{k^2}$ at time $t=\frac{T_{\epsilon}}{2k}$ for a new value of $k$.

    In particular, if there is no $k\in \N$ such that $\text{e}^{M_{n}(\frac{T_{\epsilon}}{2k})}\geq 4^{k^2}$, $\text{e}^{M_{i}(\frac{T_{\epsilon}}{2k})}<4^{k^2}$ for $i=0,1,..,n-1$, we just define $\rho_{n+1}(x,t)=\tilde{\rho}_{n}(x,t)$, $\omega_{n+1}(x,t)=\tilde{\omega}_{n}(x,t)$. 
    
    If there is $k\in \N$ such that $\text{e}^{M_{n}(\frac{T_{\epsilon}}{2k})}\geq 4^{k^2}$, $\text{e}^{M_{i}(\frac{T_{\epsilon}}{2k})}<4^{k^2}$ for $i=0,1,..,n-1$ (we consider the biggest value if there is more than one $k$ with these properties), then we can apply Lemma \ref{inductiongrowth} with $\epsilon_{1}=\|\tilde{\rho}_{n}\|_{H^2}$, $\epsilon_{2}=\frac{\epsilon}{8}2^{-k}$,  $T=T_{\epsilon}$, $K=K_{\epsilon}$, $M(t)=\widetilde{M}_{n}(t)$ and with $\epsilon_{3}$ to be fixed later, to obtain $\rho_{n+1}(x,t),\omega_{n+1}(x,t)$ fulfilling
    
    \begin{align}
    k_{n+1}(t) &:= \partial_{x_{1}}u_1[\omega_{n+1}](0,t) < 0, \notag\\
    M_{n+1}(t) &:= -\int_{0}^{t} \partial_{1}u_{1}(\omega_{n+1}(x,\tau)) \, d\tau, \notag\\
    \frac{C\nu}{n+1} + \epsilon_{3} &\ge M_{n+1}(T_{\epsilon}) - M_{n}(T_{\epsilon}) \ge -\epsilon_{3} + \frac{\nu}{n+1}c_{M_{n}}, \notag\\
    M_{n+1}(t) - M_{n}(t) + \epsilon_{3}t^2 &\ge \frac{t^2}{2T_{\epsilon}^2}e^{-M_{n}(t)}\big(M_{n+1}(T_{\epsilon}) - M_{n}(T_{\epsilon}) - \epsilon_{3}\big), \label{eq:bigM}
    \end{align}
    and
    \begin{align}
    \|\rho_{n+1}(x,t) + x_{2}\|_{C^1} &< K_{\epsilon}, \quad \|\rho_{n+1}(x,t) - \tilde{\rho}_{n}(x,t)\|_{C^1} \le \epsilon_{3}, \label{eq:ineqrho1case1}\\
    \|\rho_{n+1}(x,t) - \tilde{\rho}_{n}(x,t)\|_{C^{2.5}(\mathbb{R}^2 \setminus B_{\tilde{\delta}_{n}})} &\le \epsilon_{3}, \label{eq:ineqrho1case2}\\
    \|\omega_{n+1}(x,t)\|_{C^1} &< K_{\epsilon}, \quad \|\omega_{n+1}(x,t) - \tilde{\omega}_{n}(x,t)\|_{C^1} \le \epsilon_{3}, \notag\\
    \|\omega_{n+1}(x,t) - \tilde{\omega}_{n}(x,t)\|_{C^{1.5}(\mathbb{R}^2 \setminus B_{\tilde{\delta}_{n}})} &\le \epsilon_{3}, \notag\\
    \delta_{n+1} &:= \sup\big\{ \delta > 0 : \operatorname{supp}(\rho_{n+1}(\cdot,t)) \cap B_{\delta}(0) = \emptyset \big\}, \notag\\
    \|\rho_{n+1}(x,0)-\tilde{\rho}_{n}(x,0)\|_{H^2} &\le \frac{\epsilon}{8} 2^{-k},\notag
\end{align}
and fulfilling, for any $t \in \left[\frac{T_{\epsilon}}{2k}, T_{\epsilon}\right)$,
\begin{equation*}
    \|\rho_{n+1}(x,t)\|_{H^2} \ge \frac{\epsilon}{32} 2^{-k} 4^{k^2}, \quad \|\omega_{n+1}(x,t)\|_{H^1} \ge \frac{T_{\epsilon}}{2k} \frac{\epsilon}{32} 2^{-k} 4^{k^2}.
\end{equation*}

Now, we choose $\epsilon_{3}$ small enough so that $\epsilon_{3} \le 4^{-n}$ and
\begin{align}
    \frac{2C\nu}{n+1} \ge M_{n+1}(T_{\epsilon}) - M_{n}(T_{\epsilon}) &\ge \frac{\nu}{2(n+1)} c_{M_{n}}, \\
    M_{n+1}(t) - M_{n}(t) &\ge \frac{t^2}{4T^2} e^{-M_{n}(t)} \big(M_{n+1}(T_{\epsilon}) - M_{n}(T_{\epsilon})\big).
\end{align}

Note that, in particular, we always have
\begin{align}
    \|\rho_{n+1}(x,t) - \rho_{n}(x,t)\|_{C^{2.5}(\mathbb{R}^2 \setminus B_{\delta_{n}})} &\le \frac{2}{4^{n}}, \\
    \|\omega_{n+1}(x,t) - \omega_{n}(x,t)\|_{C^{1.5}(\mathbb{R}^2 \setminus B_{\delta_{n}})} &\le \frac{2}{4^{n}}.
\end{align}
Moreover, letting $D^{1}$ denote a generic first-order spatial derivative, using \eqref{eq:diffnplusone} and \eqref{eq:ineqrho1case2} for the middle line, the above inequalities for the first line and the support property for the last, we have
\begin{equation}
    |D^{1}\rho_{n+1}(x_0,t) - D^{1}\rho_{n}(x_0,t)| \le 
    \begin{cases}
        \dfrac{2}{4^{n}}, & x_{0} \in \mathbb{R}^2 \setminus B_{\delta_{n}}, \\[2ex]
        \dfrac{2}{n+1}, & x_{0} \in B_{\delta_{n}} \setminus B_{\delta_{n+1}}, \\[2ex]
        0, & x_{0} \in B_{\delta_{n+1}},
    \end{cases}
\end{equation}
which implies that, for $n_{2} \ge n_{1}$,
\begin{equation}
    \|\rho_{n_{2}}(x,t) - \rho_{n_{1}}(x,t)\|_{C^{1}(\mathbb{R}^2)} \le \frac{2}{n_{1}+1} + \sum_{j=n_{1}}^{\infty} \frac{2}{4^{j}},
\end{equation}
showing that $(\rho_n)_{n \in \mathbb{N}}$ is a Cauchy sequence in $C^1(\mathbb{R}^2)$. Similarly, for the vorticity we have
\begin{equation}
    |\omega_{n+1}(x_0,t) - \omega_{n}(x_0,t)| \le 
    \begin{cases}
        \dfrac{2}{4^{n}}, & x_{0} \in \mathbb{R}^2 \setminus B_{\delta_{n}}, \\[2ex]
        \dfrac{2}{n+1}, & x_{0} \in B_{\delta_{n}} \setminus B_{\delta_{n+1}}, \\[2ex]
        0, & x_{0} \in B_{\delta_{n+1}},
    \end{cases}
\end{equation}
so, for $n_{2} \ge n_{1}$,
\begin{equation}
    \|\omega_{n_{2}}(x,t) - \omega_{n_{1}}(x,t)\|_{L^{\infty}(\mathbb{R}^2)} \le \frac{2}{n_{1}+1} + \sum_{j=n_{1}}^{\infty} \frac{2}{4^{j}},
\end{equation}
confirming that $(\omega_n)_{n \in \mathbb{N}}$ is a Cauchy sequence in the Banach space of bounded continuous functions $C_b(\mathbb{R}^2)$. Consequently, we deduce that $\omega_\infty(x, t) \in C^0(\mathbb{R}^2)$.

To conclude the proof, we now show that the limit functions $\rho_{\infty}(x,t) = \lim_{n \to \infty} \rho_{n}(x,t)$ and $\omega_{\infty}(x,t) = \lim_{n \to \infty} \omega_{n}(x,t)$ satisfy all the desired properties.
    
    \textbf{Step 1: Unboundedness of $M_{n}(t)$ for $t>0$.}
    
    We start by showing that the sequence given by $M_{n}(T_{\epsilon})$ is unbounded. We can argue by contradiction: Assume that
    $$\text{lim}_{n\rightarrow\infty}M_{n}(T_{\epsilon})=M_{\infty}(T_{\epsilon})<\infty.$$
    Note that $M_{\infty}$ must exist since $M_{n}$ is a monotone increasing sequence.
    Then, we have
    $$M_{n}(T_{\epsilon})\geq M_{0}(T_{\epsilon})+\sum_{j=0}^{n-1}\frac{\nu}{j+1}\frac{c_{M_{j}}}{2},$$
    but since by hypothesis $c_{M_{\infty}}$ is bounded from below and the series $\frac{1}{n}$ is divergent, then $M_{n}$ must be unbounded, and in particular since it is increasing monotonically we have $M_{\infty}=\infty$.

    To prove that $M_{n}(t)$ is also unbounded for any $t>0$, we again argue by contradiction. Assume that there is some $t=t_{0}$ where $M_{n}(t_{0})$ stays bounded for all $n$. Then, we have, using \eqref{eq:bigM}:
    $$M_{n}(t_{0})\geq M_{0}(t_{0})+\sum_{j=1}^{n}M_{j}(t_{0})-M_{j-1}(t_{0})\geq M_{0}(t_{0})+{C(t_0, T_\epsilon)}\sum_{j=1}^{n}(M_{j}(T_{\epsilon})-M_{j-1}(T_{\epsilon}))e^{-M_{j-1}(t_{0})}$$
    and since by assumption $e^{-M_{j-1}(t_{0})}$ is bounded from below and $\sum_{j=1}^{n}(M_{j}(T_{\epsilon})-M_{j-1}(T_{\epsilon}))$ is divergent, $M_{n}(t_{0})$ must diverge and we obtain a contradiction.

   \textbf{Step 2: Convergence of the sequence and upper bounds}
   
   Let us now consider $\rho_{\infty}$, the limit in $C^1$ of $\rho_{n}$.
   If we take $n_{2}\geq n_{1}$ fulfilling $\text{e}^{M_{n_{1}}}\geq 4^{k}$, using that, for $n\geq j$ 
   $$\text{supp}(\rho_{n+1}(x,0)-\rho_{n}(x,0))\cap\text{supp}(\rho_{j}(x,0))=\emptyset$$
   we have that
   $$\|\rho_{n_{1}}(x,0)-\rho_{n_{2}}(x,0)\|^2_{H^2}= \sum_{j=n_{1}}^{n_{2}-1}\|\rho_{j+1}(x,0)-\rho_{j}(x,0)\|^2_{H^2}\leq \sum_{n_{1}}^{n_{2}}\frac{\nu^2}{(j+1)^2}+\sum_{i=k}^{\infty}\frac{\epsilon^2}{64}4^{-i}$$
   which tends to $0$ as $n_{1},k$ tend to infinity, and therefore
   $$\text{lim}_{n\rightarrow\infty}\|\rho_{\infty}(x,0)-\rho_{n}(x,0)\|_{H^2}=0,$$
   so in particular, using again the separation of the supports at initial time
   $$\|\rho_{\infty}(x,0)\|^2_{H^2}= \|\rho_{\rm in}(x,0)\|^2_{H^2}+\sum_{j=0}^{\infty}\|\rho_{j+1}(x,0)-\rho_{j}(x,0)\|^2_{H^2}\leq \frac{\epsilon^2}{16}+\sum_{j=0}^{\infty}\frac{\nu^2}{(j+1)^2}+\sum_{i=1}\frac{\epsilon^2}{64}4^{-i},$$

   so by taking $\nu$ small enough, we get $\|\rho_{\infty}(x,0)\|_{H^2}\leq \epsilon.$ { About the vorticity, recall that $\omega_\infty (x, 0)=0$.}
   Finally, note that, for any $n_{2}\geq n_{1}\geq n_{0}$ we have that
   $$\|\rho_{n_{2}}(x,t)-\rho_{n_{1}}(x,t)\|_{C^{2.5}(\R^2\setminus  B_{\delta_{n_{0}}}(0))}\leq \sum_{i=n_{1}}^{\infty}2\times 4^{-i}$$
   $$\|\omega_{n_{2}}(x,t)-\omega_{n_{1}}(x,t)\|_{C^{1.5}(\R^2\setminus  B_{\delta_{n_{0}}}(0))}\leq \sum_{i=n_{1}}^{\infty}2\times 4^{-i}$$
   and thus
   $$\text{lim}_{n\rightarrow\infty}\|\rho_{\infty}(x,t)-\rho_{n}(x,t)\|_{C^{2.5}(\R^2\setminus B_{\delta_{n_{0}}}(0))}=0,$$
   $$\text{lim}_{n\rightarrow\infty}\|\omega_{\infty}(x,t)-\omega_{n}(x,t)\|_{C^{1.5}(\R^2\setminus B_{\delta_{n_{0}}}(0))}=0.$$


\textbf{Step 3: Existence and continuity of the solution $(\rho_{\infty}, \omega_\infty)$.}

Using the convergence of $\omega_{n}$ in $L^{\infty}([0, T_\epsilon] \times \mathbb{R}^2)$ and $\dot{H}^{-1}(\mathbb{R}^2)$, we obtain that
$$
\lim_{n\to\infty} \|\mathbf{u}[\omega_\infty] - \mathbf{u}[\omega_n]\|_{L^\infty([0, T_\epsilon]; C^{\alpha}(\mathbb{R}^2))} = 0
$$
for any $\alpha < 1$. 

Furthermore, from the local convergence of $\omega_{n}$ (resp. $\rho_{n}$) in $C^{1,\alpha}(\mathbb{R}^{2} \setminus B_{\delta}(0))$ (resp. $C^{2,\alpha}(\mathbb{R}^{2} \setminus B_{\delta}(0))$), we can pass to the limit in the integral form of the equations to obtain
\begin{align*}
\rho_{\infty}(t_{2}, x) - \rho_{\infty}(t_{1}, x) &= -\int_{t_{1}}^{t_{2}} (\mathbf{u}[\omega_\infty] \cdot \nabla \rho_{\infty})(t, x) \, dt, \\
\omega_{\infty}(t_{2}, x) - \omega_{\infty}(t_{1}, x) &= -\int_{t_{1}}^{t_{2}} (\mathbf{u}[\omega_\infty] \cdot \nabla \omega_{\infty})(t, x) \, dt,
\end{align*}
for any $x \neq 0$.

This implies that $\omega_{\infty}(x,t)$ and $\rho_{\infty}(x,t)$ are Lipschitz continuous in time with respect to the $L^{\infty}$ norm on compact sets avoiding the origin. In particular, since $\omega_{\infty}$ is bounded in $\dot{H}^{-1}(\mathbb{R}^2) \cap L^{2}(\mathbb{R}^2)$, the velocity field $\mathbf{u}[\omega_\infty]$ is continuous in time. This immediately implies that $\rho_{\infty}$ and $\omega_{\infty}$ belong to $C^{1}([0,T_\epsilon]; C(\mathbb{R}^2 \setminus B_{\delta}(0)))$, therefore fulfilling the evolution equations in the classical sense for any $x \neq 0$.

\medskip
\textbf{Strong time continuity.}
We know that $(\rho_\infty(t), \mathbf{u}[\omega_\infty(t)]) \in C^1(\mathbb{R}^2) \times C^\alpha(\mathbb{R}^2)$ for any fixed $t \in [0, T_\epsilon]$ and any $\alpha < 1$. In particular,
$$
(\rho_\infty, \mathbf{u}[\omega_\infty]) \in L^\infty([0, T_\epsilon]; H^1(\mathbb{R}^2)).
$$

To prove full time continuity in the strong $H^1(\mathbb{R}^2)$ topology, we apply the standard weak-to-strong convergence argument:

\begin{enumerate}
    \item \textbf{Weak time continuity:} Since $\mathbf{u}[\omega_\infty] \cdot \nabla \rho_\infty \in L^\infty([0,T_\epsilon]; L^2(\mathbb{R}^2))$, applying the transport equation yields a uniform bound for the time derivative:
\[
\partial_t \rho_\infty = -\mathbf{u}[\omega_\infty] \cdot \nabla \rho_\infty \in L^\infty([0,T_\epsilon]; L^2(\mathbb{R}^2)).
\]
Since $\rho_\infty \in L^\infty([0,T_\epsilon]; H^1(\mathbb{R}^2))$ and its time derivative is bounded in a lower-order space, a standard duality argument (or the weak version of the Aubin-Lions Lemma) implies that $t \mapsto \rho_\infty(t)$ is continuous with respect to the weak topology of $H^1(\mathbb{R}^2)$, i.e., $\rho_\infty \in C_w([0,T_\epsilon]; H^1(\mathbb{R}^2))$. Similarly, we have $\mathbf{u}[\omega_\infty] \in C_w([0,T_\epsilon]; H^1(\mathbb{R}^2))$.

    \item \textbf{Energy control:} 
    Denoting $\mathcal{E}(t) := \|\rho_\infty(t)\|_{H^1}^2 + \|\mathbf{u}[\omega_\infty(t)]\|_{H^1}^2$, standard commutator estimates and H\"older's inequality give, for any $p>2$:
    \begin{align*}
    \frac{d}{dt}\mathcal{E}(t) &\lesssim (\|[D,  \mathbf{u}[\omega_\infty]]\cdot \nabla \rho_\infty\|_{L^2} + \|[D,  \mathbf{u}[\omega_\infty]]\cdot \nabla \mathbf{u}[\omega_\infty]\|_{L^2})\mathcal{E}^{\frac 12} (t) \\
    & \lesssim \|\mathbf{u}[\omega_\infty]\|_{W^{1, p}} (\|\rho_\infty\|_{W^{1, \frac{2p}{p-2}}}+\|\mathbf{u}[\omega_\infty]\|_{W^{1, \frac{2p}{p-2}}})\mathcal{E}^{\frac 12} (t).
    \end{align*}
    Now, by the Calder\'on-Zygmund singular integral estimate $\|\nabla \mathbf{u}[\omega]\|_{L^p} \le C p \|\omega\|_{L^2 \cap L^\infty}$ (see \cite{bertozzi-book}), 
    and using interpolation
    \begin{align*}
        \|\rho_\infty\|_{W^{1, \frac{2p}{p-2}}} \lesssim C(p) \|\rho_\infty\|_{W^{1, \infty}}^\frac{2}{p}\mathcal{E}^{\frac{p-2}{2p}}(t).
    \end{align*}
    Since $\rho_{\infty}(t) \in C^1(\mathbb R^2)$, this implies 
    \begin{align*}
        \frac{d}{dt}\mathcal{E}(t) &\lesssim C(p)(\mathcal{E}^{1-\frac{1}{p}}(t)+\mathcal{E}^\frac{1}{2}(t)), \quad p >2.
    \end{align*}
    Integrating the above inequality over $[t_1, t_2]$ shows that $t \mapsto \mathcal{E}(t)$ is continuous on $[0,T_\epsilon]$.
\end{enumerate}

Since $H^1(\mathbb{R}^2)$ is a Hilbert space, weak continuity combined with the above energy control implies strong continuity by standard arguments (see \cite[Theorem 3.5]{bertozzi-book}):
$$
(\rho_\infty, \mathbf{u}[\omega_\infty]) \in C([0, T_\epsilon]; H^1(\mathbb{R}^2)).
$$
Moreover, writing
\begin{align*}
\partial_t \rho_{\infty}(t_{2}) - \partial_t \rho_\infty(t_{1}) = -\mathbf{u}[\omega_\infty(t_2)] \cdot \nabla (\rho_{\infty}(t_2) - \rho_\infty(t_1)) - (\mathbf{u}[\omega_\infty(t_2)] - \mathbf{u}[\omega_\infty(t_1)]) \cdot \nabla \rho_\infty(t_1),
\end{align*}
we obtain
\begin{align*}
\|\partial_t \rho_{\infty}(t_{2}) - \partial_t \rho_\infty(t_{1})\|_{L^2} \lesssim \|\mathbf{u}[\omega_\infty]\|_{L^\infty} \|\rho_{\infty}(t_2) - \rho_\infty(t_1)\|_{H^1} + \|\mathbf{u}[\omega_\infty(t_2)] - \mathbf{u}[\omega_\infty(t_1)]\|_{L^2}\|\rho_\infty\|_{C^1},
\end{align*}
which implies that $\rho_\infty \in C^1([0, T_\epsilon]; L^2(\mathbb{R}^2))$. Similarly, writing
\begin{align*}
\partial_t \mathbf{u}[\omega_{\infty}(t_{2})] - \partial_t \mathbf{u}[\omega_{\infty}(t_{1})] = &-\mathbb{P}\left[\mathbf{u}[\omega_\infty(t_2)] \cdot \nabla (\mathbf{u}[\omega_{\infty}(t_{2})] - \mathbf{u}[\omega_{\infty}(t_{1})])\right] \\
&-\mathbb{P}\left[(\mathbf{u}[\omega_\infty(t_2)] - \mathbf{u}[\omega_\infty(t_1)]) \cdot \nabla \mathbf{u}[\omega_{\infty}(t_{1})]\right],
\end{align*}
and using the fact that the Leray projector $\mathbb{P}: L^2(\mathbb{R}^2) \to L^2_\sigma(\mathbb{R}^2)$ is a bounded linear operator, we deduce that $\mathbf{u}[\omega_\infty] \in C^1([0, T_\epsilon]; L^2(\mathbb{R}^2))$. 

Hence,
\begin{equation}
(\rho_\infty, \mathbf{u}[\omega_\infty]) \in C([0, T_\epsilon]; H^1(\mathbb{R}^2)) \cap C^1([0, T_\epsilon]; L^2(\mathbb{R}^2))
\end{equation}
is a strong solution. 

Furthermore, combining $(\rho_\infty, \mathbf{u}[\omega_\infty]) \in L^\infty([0, T_\epsilon]; C^1(\mathbb{R}^2) \times C^\alpha(\mathbb{R}^2))$ with the strong $H^1$ continuity, we can establish strong time continuity in lower space norms:
\begin{enumerate}
    \item \textbf{For the density $\rho_\infty$:} Since $\rho_\infty \in C([0,T_\epsilon]; H^1(\mathbb{R}^2)) \cap L^\infty([0,T_\epsilon]; W^{1,\infty}(\mathbb{R}^2))$,
    interpolating between $\nabla\rho_\infty \in L^2$ and $\nabla\rho_\infty \in L^\infty$ gives, for any $2\le q<\infty$,
    \[
    \|\nabla \rho_\infty(t)\|_{L^q} \lesssim \|\nabla\rho_\infty(t)\|_{L^2}^{2/q}\|\nabla\rho_\infty(t)\|_{L^\infty}^{1-2/q} <\infty,
    \]
    hence $\rho_\infty(t) \in W^{1,q}(\mathbb{R}^2)$. Morrey's embedding $W^{1,q}(\mathbb{R}^2)\hookrightarrow C^{1-2/q}(\mathbb{R}^2)$ gives $\rho_\infty \in C([0,T_\epsilon]; C^{1-\delta}(\mathbb{R}^2))$ for $\delta = 2/q$, for any $\delta>0$ small.
    \item \textbf{For the velocity $\mathbf{u}[\omega_\infty]$:} Interpolating between the strong $L^2$ time-continuity coming from the $H^1$ control and the uniform-in-time bound in $C^\alpha(\mathbb{R}^2)$, we obtain for any $\delta > 0$ and suitable $\theta \in (0,1)$:
    \begin{align*}
    \lim_{|t_2-t_1|\to 0} \|\mathbf{u}[\omega_\infty](t_2) - \mathbf{u}[\omega_\infty](t_1)\|_{C^{\alpha-\delta}} &\lesssim \lim_{|t_2-t_1|\to 0} \|\mathbf{u}[\omega_\infty](t_2) - \mathbf{u}[\omega_\infty](t_1)\|_{L^2}^\theta \|\mathbf{u}[\omega_\infty]\|_{L^\infty([0, T_\epsilon]; C^{\alpha}(\mathbb{R}^2))}^{1-\theta}\\
    &\qquad\quad   = 0.
    \end{align*}
\end{enumerate}
Consequently, the strong solution satisfies, for any $0<\delta \ll 1$:
\begin{equation}
(\rho_\infty, \mathbf{u}[\omega_\infty]) \in C([0, T_\epsilon]; H^1(\mathbb{R}^2) \cap C^{1-\delta}(\mathbb{R}^2)) \cap C^1([0, T_\epsilon]; L^2(\mathbb{R}^2)).
\end{equation}
Regarding the vorticity $\omega_\infty = \nabla^\perp\cdot\mathbf{u}[\omega_\infty]$, since $\omega_\infty \in L^\infty([0, T_\epsilon]; C^0(\mathbb{R}^2) \cap L^2(\mathbb{R}^2))$, Biot-Savart law implies that $\mathbf{u}[\omega_\infty]$ is Log-Lipschitz continuous in space. By Osgood's theorem, this guarantees the existence of a unique, continuous flow map $X(t,x)$, ensuring vorticity transport $\omega_\infty(t,x) = \omega_0(X^{-1}(t,x))$. 
Consequently, we deduce:
\begin{equation}
\omega_\infty \in C([0, T_\epsilon]; C^0(\mathbb{R}^2) \cap L^2(\mathbb{R}^2)) \cap C^1([0, T_\epsilon]; \dot H^{-1}(\mathbb{R}^2)).
\end{equation}
\textbf{Step 4: Norm explosion of the limit solution.}
 Since $\rho_{\infty}$ is small in $H^2$ at the initial time and it is a solution, it is enough to show that, for any $t_{0}\in(0,T_{\epsilon})$
   $$\left\|\rho_{\infty}(x,t_{0})+x_{2}\right\|_{H^2}, \|\omega_{\infty}(x,t_{0})\|_{H^1}=\infty.$$
   Now, given some $k\geq 1$, if we choose $n$ such that $\text{e}^{M_{n}(\frac{T_{\epsilon}}{2k})}\geq 4^{k^2}$, $\text{e}^{M_{i}(\frac{T_{\epsilon}}{2k})}<4^{k^2}$ for $i=0,1,..,n-1$, then, for any $t\in [\frac{T_{\epsilon}}{2k}, T_{\epsilon})$
    \begin{align*}
        \left\|\rho_{\infty}\left(x,t\right)+x_{2}\right\|_{H^2(\R^2\setminus B_{\delta_{n+1}})}&\geq \left\|\rho_{n+1}\left(x,t\right)+x_{2}\right\|_{H^2(\R^2\setminus B_{\delta_{n+1}})}\\
        &\quad -\sum_{j=n+1}^{\infty}\left\|\rho_{j+1}\left(x,t\right)-\rho_{j}\left(x,t\right)\right\|_{H^2(\R^2\setminus B_{\delta_{n+1}})}\\
        &\geq \epsilon\frac{2^{-k}}{32}4^{k^2}-\sum_{j=n}^{\infty}\frac{2}{4^{n+1}}\geq  \epsilon\frac{2^{-k}}{32}4^{k^2}-1
    \end{align*}
    which diverges with $k\rightarrow \infty$. 
    Similarly 
 \begin{align*}
    \left\| \omega_{\infty} \left( x, t\right) \right\|_{H^1(\mathbb{R}^2 \setminus B_{\delta_{n+1}})} 
    &\geq \left\| \omega_{n+1} \left( x, t\right) \right\|_{H^1(\mathbb{R}^2 \setminus B_{\delta_{n+1}})} 
    - \sum_{j=n+1}^{\infty} \left\| \omega_{j+1} \left( x, t \right) - \omega_{j} \left( x, t \right) \right\|_{H^1(\mathbb{R}^2 \setminus B_{\delta_{n+1}})} \\
    &\geq \frac{T_{\epsilon}}{2k} \epsilon \frac{2^{-k}}{32} 4^{k^2} - \sum_{j=n+1}^{\infty} \frac{2}{4^{j+1}} \\
    &\geq \frac{T_{\epsilon}}{2k} \epsilon \frac{2^{-k}}{32} 4^{k^2} - 1.
\end{align*}
    In particular for any $t\in(0,T_{\epsilon})$, we have
    $$\|\rho_{\infty}(x,t)\|_{H^2},\|\omega_{\infty}(x,t)\|_{H^1}=\infty.$$

\textbf{Step 5: Uniqueness.}
To prove uniqueness, we assume that there exist two strong solutions $(\rho_\infty, \omega_\infty)$ and $(\tilde{\rho}_\infty, \tilde{\omega}_\infty)$ on the time interval $t \in [0, T_\epsilon]$ starting from the same initial data $(\rho_0, \omega_0)$. We consider the error functions
\begin{equation}
    \varrho (x, t) = \rho_\infty (x, t) - \tilde{\rho}_\infty (x, t), \quad 
    v(x, t) = \omega_\infty (x, t) - \tilde{\omega}_\infty (x, t), \quad \mathcal{P} = P_\infty - \tilde{P}_\infty.
\end{equation}
They satisfy the following perturbation system:
\begin{align}
    \partial_t \varrho + \mathbf{u}[\omega_\infty] \cdot \nabla \varrho + \mathbf{u}[v] \cdot \nabla \tilde{\rho}_\infty & = u_2[v], \label{eq:uniq_rho}\\
    \partial_t \mathbf{u}[v] + \mathbf{u}[\omega_\infty] \cdot \nabla \mathbf{u}[v] + \mathbf{u}[v] \cdot \nabla \mathbf{u}[\tilde{\omega}_\infty] + \nabla \mathcal{P} & = - \begin{pmatrix} 0 \\ \varrho \end{pmatrix}. \label{eq:uniq_u}
\end{align}
We define the Yudovich-type energy functional
\begin{equation}
    \mathcal{F}(t) := \|\mathbf{u}[v(t)]\|_{L^2(\mathbb{R}^2)}^2 + \|\varrho (t)\|_{L^2(\mathbb{R}^2)}^2.
\end{equation}
Differentiating $\mathcal{F}(t)$ with respect to time and taking into account the incompressibility condition $\operatorname{div} \mathbf{u}[\omega_\infty] = 0$, the transport terms vanish upon integration by parts. Furthermore, the linear coupling terms cancel out:
\begin{equation}
    \int_{\mathbb{R}^2} u_2[v] \varrho \, dx - \int_{\mathbb{R}^2} \varrho \, u_2[v] \, dx = 0.
\end{equation}
Consequently, the energy evolution reduces to
\begin{equation}
    \mathcal{F}'(t) = - \int_{\mathbb{R}^2} \big( \mathbf{u}[v] \cdot \nabla \mathbf{u}[\tilde{\omega}_\infty] \big) \cdot \mathbf{u}[v] \, dx - \int_{\mathbb{R}^2} (\mathbf{u}[v] \cdot \nabla \tilde{\rho}_\infty) \varrho \, dx =: \mathcal{I}_1 + \mathcal{I}_2.
\end{equation}
Using the uniform bound $\|\nabla \tilde{\rho}_\infty\|_{L^\infty} \le K$ from Lemma \ref{inductiongrowth}, the second term is estimated by Cauchy-Schwarz:
\begin{equation}
    \mathcal{I}_2 \le \|\nabla \tilde{\rho}_\infty\|_{L^\infty} \|\mathbf{u}[v]\|_{L^2} \|\varrho\|_{L^2} \le K \mathcal{F}(t).
\end{equation}
To estimate $\mathcal{I}_1$, we exploit H\"older's inequality for $p > 2$:
\begin{equation}
    \mathcal{I}_1 \le \|\nabla \mathbf{u}[\tilde{\omega}_\infty]\|_{L^p} \|\mathbf{u}[v]\|_{L^{\frac{2p}{p-1}}}^2.
\end{equation}
By Sobolev interpolation and the uniform potential bound $\|\mathbf{u}[v]\|_{L^\infty} \le C_0$ (since $v \in L^2 \cap L^\infty$), we have
\begin{equation}
    \|\mathbf{u}[v]\|_{L^{\frac{2p}{p-1}}}^2 \le \|\mathbf{u}[v]\|_{L^\infty}^{\frac{2}{p}} \|\mathbf{u}[v]\|_{L^2}^{2\left(1-\frac{1}{p}\right)} \le C_0^{\frac{2}{p}} \mathcal{F}(t)^{1-\frac{1}{p}}.
\end{equation}
Applying the Calder\'on-Zygmund singular integral estimate $\|\nabla \mathbf{u}[\omega]\|_{L^p} \le C p \|\omega\|_{L^2 \cap L^\infty}$ (see \cite{bertozzi-book}) together with the uniform bound $\|\tilde{\omega}_\infty\|_{L^2 \cap L^\infty} \le N$ from Lemma \ref{inductiongrowth}, there exists a constant $C = C(K, N) > 0$ such that
\begin{equation}
    \mathcal{F}'(t) \le C \left( p \mathcal{F}(t)^{1-\frac{1}{p}} + \mathcal{F}(t) \right) \quad \text{for all } p \gg 2.
\end{equation}
Integrating this differential inequality with initial condition $\mathcal{F}(0) = 0$ yields
\begin{equation}
    \mathcal{F}(t) \le \left( C p \left( e^{\frac{t}{p}} - 1 \right) \right)^p \le (C t)^p \quad \text{for all } t \in [0, T^*],
\end{equation}
where $T^* < \frac{1}{C}$. Taking the limit $p \to \infty$, we conclude that $\mathcal{F}(t) = 0$ for all $t \in [0, T^*]$. A standard continuation argument extends $\mathcal{F}(t) \equiv 0$ over the entire interval $[0, T_\epsilon]$.

Since $\mathcal{F}(t) = 0$ for all $t \in [0, T_\epsilon]$, we immediately have
\begin{equation}
    \varrho(t, x) = 0 \quad \text{and} \quad \mathbf{u}[v](t, x) = 0 \quad \text{for a.e. } x \in \mathbb{R}^2, \; \forall t \in [0, T_\epsilon].
\end{equation}
Since $\mathbf{u}[v] \equiv 0$, taking the curl in the sense of distributions yields $v = \nabla^\perp\cdot \mathbf{u}[v] = 0$ almost everywhere in $\mathbb{R}^2$. Finally, since $(\varrho, v) \in C([0, T_\epsilon]; C^0(\mathbb{R}^2))$, spatial continuity implies that the equality holds pointwise for all $(x,t) \in \mathbb{R}^2 \times [0, T_\epsilon]$:
\begin{equation}
    \rho_\infty(x, t) = \tilde{\rho}_\infty(x, t) \quad \text{and} \quad \omega_\infty(x, t) = \tilde{\omega}_\infty(x, t) \quad \forall (x, t) \in \mathbb{R}^2 \times [0, T_\epsilon].
\end{equation}
This completes the proof of uniqueness.

\section*{Acknowledgment}
RB acknowledges support of the Institut Henri Poincaré (UAR 839 CNRS-Sorbonne Université), and LabEx CARMIN (ANR-10-LABX-59-01) and GNAMPA - INdAM.
RB thanks Instituto de Ciencias Matem\'aticas, where part of this work was developed. LMZ is supported in part by the Spanish Ministry of Science and Innovation, through PID2023-152878NB-I00.
\bibliographystyle{siam}

\bibliography{biblio}

\end{document}